\documentclass[a4paper, 10pt]{article}
\usepackage{amsmath} 
\usepackage{amsthm}
\usepackage{amssymb}
\usepackage{graphics}
\usepackage{latexsym}
\usepackage{comment}
\numberwithin{equation}{section}
\newtheorem{thm}{Theorem}[section]
\newtheorem{prop}[thm]{Proposition}
\newtheorem{lemm}[thm]{Lemma}
\newtheorem{cor}[thm]{Corollary}
\newtheorem{defn}[thm]{Definition}
\newtheorem{rem}[thm]{Remark}
\newcommand{\BBB}{\mathbb}
\newcommand{\R}{{\BBB R}}
\newcommand{\Z}{{\BBB Z}}
\newcommand{\T}{{\BBB T}}

\newcommand{\N}{{\BBB N}}
\newcommand{\C}{{\BBB C}}

\newcommand{\ee}{\mbox{\boldmath $1$}}
\newcommand{\supp}{\operatorname{supp}}

\newcommand{\F}{\mathcal{F}}
\newcommand{\kuuhaku}{\text{}}
%
%
\title{Local and global well-posedness for the 2D Zakharov-Kuznetsov-Burgers equation 
in low regularity Sobolev space
}
\author{Hiroyuki Hirayama\\ {\small Organization for Promotion of Tenure Track, University of Miyazaki,} \\ 
{\small Miyazaki, 889-2192, Japan}\\
{\small {\it E-mail address,} h.hirayama@cc.miyazaki-u.ac.jp}
}
\date{}
\begin{document}
\maketitle
\begin{abstract}
In the present paper, we consider the Cauchy problem of the 
2D Zakharov-Kuznetsov-Burgers (ZKB) equation, 
which has the dissipative term $-\partial_x^2u$. 
This is known that the 2D Zakharov-Kuznetsov equation 
is well-posed in $H^s(\R^2)$ for $s>1/2$, 
and the 2D nonlinear parabolic equation with quadratic derivative nonlinearity 
is well-posed in $H^s(\R^2)$ for $s\ge 0$. 
By using the Fourier restriction norm with dissipative effect, 
we prove the well-posedness for ZKB equation in $H^s(\R^2)$ for $s>-1/2$. \\
\noindent {\it Key Words and Phrases.} Zakharov-Kuznetsov equation, Burgers equation, well-posedness, Cauchy problem, Fourier restriction norm.\\
2010 {\it Mathematics Subject Classification.} 35Q53.
\end{abstract}
%
%
\section{Introduction\label{intro}}
We consider the Cauchy problem of the 2D Zakharov-Kuznetsov-Burgers (ZKB) equation:
\begin{equation}\label{ZKB}
\begin{cases}
\displaystyle \partial_{t}u+\partial_x(\partial_x^2+\partial_y^2)u-\partial_x^2u=\partial_x(u^2),\ \ t>0,\ (x,y)\in \R^2,\\
u(0,x,y)=u_{0}(x,y),\ \ (x,y)\in \R^{2},
\end{cases}
\end{equation}
where the unknown function $u$ is $\R$-valued. 
This equation is two dimensional model of the Kowteweg-de Vries-Burgers (KdVB) equation 
\begin{equation}\label{KdVB}
\displaystyle \partial_{t}u+\partial_x^3u-\partial_x^2u=\partial_x(u^2),\ \ t>0,\ x\in \R, 
\end{equation}
and appears in the dust-ion-acoustic-waves in dusty-plasmas (See, \cite{MS08}, \cite{ZTZSL13}). 
We can see that (\ref{ZKB}) has both dissipative term and dispersive term. 
The aim of this paper is to prove the well-posedness of (\ref{ZKB}) 
in the Sobolev space $H^s(\R^2)$. 

First, we introduce some known results for related problems for 1D case. 
In \cite{KPV96}, Kenig, Ponce, and Vega proved that the Kowteweg-de Vries (KdV) equation
\[
\partial_{t}u+\partial_x^3u=\partial_x(u^2),\ \ t>0,\ x\in \R,
\]
is locally well-posed in $H^s(\R )$ for $s>-3/4$. Colliander, Keel, Stafillani, Takaoka, and Tao (\cite{CKSTT03}) extended the local result to globally in time. 
For the critical case, Kishimoto (\cite{Ki09}) and Guo (\cite{Guo09}) obtained the global well-posedness of KdV equation in $H^{-\frac{3}{4}}(\R)$. 
While, it is proved that the flow map of KdV equation is not uniformly continuous for $s<-3/4$
by Kenig, Ponce,  and Vega in \cite{KPV01} (for $\C$-valued KdV) and Christ, Colliander, and Tao in \cite{CCT03} (for $\R$-valued KdV). 
Therefore, $s=-3/4$ is optimal regularity to obtain the well-posedness of KdV equation by using the iteration argument. 
For the Burgers equation
\[
\partial_{t}u-\partial_x^2u=\partial_x(u^2),\ \ t>0,\ x\in \R,
\]
Dix (\cite{Di96}) proved the local well-posedness in $H^s(\R)$ for $s>-1/2$ and 
nonuniqueness of solution for $s<-1/2$. 
For the critical case, Bekiranov (\cite{Be96}) obtained the local well-posedness of the Burgers equation in $H^{-\frac{1}{2}}(\R )$. 
These results say that $-1/2$ is optimal regularity to obtain the well-posedness of the Burgers equation. 
In \cite{MR02}, Molinet and Ribaud considered the KdV-Burgers equation
\[
\partial_{t}u+\partial_x^3u-\partial_x^2u=\partial_x(u^2),\ \ t>0,\ x\in \R
\]
and obtained the global well-posedness in $H^s(\R)$ for $s>-1$. 
For the critical case, Molinet and Vento (\cite{MV}) proved the global well-posedness of the KdV-Burgers equation in $H^{-1}(\R)$. 
They also proved that the flow map is discontinuous for $s<-1$. 
We note that the regularity $s=-1$ is lower than both $-3/4$ and $-1/2$. 
It means that both the dispersive term and the dissipative term are 
essentially effective for well-posedness.

Next, we introduce some known results for related problems for 2D case. 
Gr\"unrock and Herr (\cite{GH14}), and Molinet and Pilod (\cite{MP15}) proved that the 2D Zakharov-Kuznetsov equation
\begin{equation}\label{ZK}
\partial_{t}u+\partial_x(\partial_x^2+\partial_y^2)u=\partial_x(u^2),\ \ t>0,\ (x,y)\in \R^2
\end{equation}
is locally well-posed in $H^s(\R^2)$ for $s>1/2$. 
Especially, Gr\"unrock and Herr used 
the  linear transform
\[
v(t,x,y)=u\left(t,\frac{4^{\frac{1}{3}}}{2}(x+y),\frac{4^{\frac{1}{3}}}{2\sqrt{3}}(x-y)\right). 
\]
and rewrote (\ref{ZK}) to the symmetric form
\begin{equation}\label{ZK_sym}
\partial_{t}v+(\partial_x^3+\partial_y^3)v=4^{-\frac{1}{3}}(\partial_x+\partial_y)(v^2),\ \ t>0,\ (x,y)\in \R^2.
\end{equation}
Such transform is introduced by Artzi, Koch, and Saut in \cite{AKS03}.  
We note that the well-posedness of (\ref{ZK}) in $H^s(\R^2)$ is 
equivalent to the well-posedness of (\ref{ZK_sym}) in $H^s(\R^2)$. 
This transform is not essentially needed to obtain the well-posedness 
(Actually, Molinet and Pilod did not used such transform), 
but the symmetry helps us to find the structure of the equation
and to write some parts of proof simply. 
Well-posedness of (\ref{ZK}) for $s\le 1/2$ is still open. 
But, Kinoshita gave the author the comment that 
there is a counter example for the $C^2$-well-posedness of (\ref{ZK_sym}) 
in $H^s(\R^2)$ for $s<-1/4$. 
His counter example is given as
\[
\widehat{u_0}(\xi, \eta):=N^{-s+\frac{5}{4}}(\chi_{A}(\xi,\eta)+\chi_{B}(\xi,\eta)), 
\]
where
\[
\begin{split}
A&:=
\left\{\left.Na+N^{-\frac{1}{2}}\delta v+N^{-2}\epsilon \frac{v^{\perp}}{|v^{\perp}|}
\right| -1<\delta, \epsilon<1\right\},\\
B&:=
\left\{\left.Nb+N^{-\frac{1}{2}}\delta v+N^{-2}\epsilon \frac{v^{\perp}}{|v^{\perp}|}
\right| -1<\delta, \epsilon<1\right\},\\
v&:=(3\sqrt[3]{9}, \sqrt[3]{100}),\ 
a:=(\sqrt[3]{2}, \sqrt[3]{75}),\ 
b:=\left(-3\sqrt[3]{2}, -\frac{\sqrt[3]{75}}{5}\right). 
\end{split}
\]
Indeed, we can obtain $\|u_0\|_{H^s}\sim 1$ and
\[
\sup_{0<t\le T}\left\|\int_0^t e^{-(t-t')(\partial_x^3+\partial_y^3)}
(\partial_x+\partial_y)\left((e^{-t'(\partial_x^3+\partial_y^3)}u_0)^2\right)dt'\right\|_{H^s}
\gtrsim N^{-s-\frac{1}{4}}.
\]
While for the nonlinear parabolic equation
\[
\partial_{t}u-\Delta u=P(D)F(u),\ \ t>0,\ (x,y)\in \R^d, 
\]
Ribaud (\cite{R98}) obtained some well-posedness results. 
His results contain that the well-posedness of the 2D nonlinear parabolic equation
\begin{equation}\label{parab}
\partial_{t}u-(\partial_x^2+\partial_y^2) u=\partial_x(u^2),\ \ t>0,\ (x,y)\in \R^2
\end{equation}
in $H^s(\R^2)$ for $s\ge 0$ and nonuniqueness of solution for $s<0$. 
Therefore, our interest is the well-posedness of (\ref{ZKB}) in $H^s(\R^2)$ 
for lower $s$ than both $-1/4$ and $0$. 

Here, we introduce the results for 2D dispersive-dissipative models. 
The KP-Burgers equation
\[
\partial_x\left(\partial_t u+\partial_x^3u-\partial_x^2u-\partial_x (u^2)\right)
+\epsilon \partial_y^2 u=0,\ \ t>0,\ (x,y)\in \R^2,\ \ \epsilon \in \{-1,1\}, 
\]
is also two dimensional model of KdV-Burgers equation. 
We call KP-Burgers equation ``KP-I-Burgers equation'' 
if $\epsilon =-1$, and
``KP-II-Burgers equation''  if $\epsilon =1$. 
The well-posedness of KP-Burgers equation is obtained in $H^{s,0}(\R^2)$ for $s>-1/2$ 
by Kojok in \cite{Koj07} (for $\epsilon =1$) and Mohamad in \cite{Moh12} (for $\epsilon =-1$). 
Where $H^{s,0}(\R^2)$ is anisotropic Sobolev space 
defined by the norm $\|f\|_{H^{s,0}}=\|\langle \xi \rangle^s \widehat{f}(\xi,\eta)\|_{L^2_{\xi\eta}}$. 
Carvajal, Esfahani, and Panthee (\cite{CEP17}) considered the two dimensional 
dissipative KdV type equation
\[
\partial_tu+\partial_x^3u+L_{x,y}u+\partial_x(u^2)=0,\ \ 
t>0,\ (x,y)\in \R^{2}, 
\]
where the operator $L_{x,y}$ is defined by
\[
\F_{xy}[L_{x,y}f](\xi,\eta)=-\Phi (\xi, \eta)\widehat{f}(\xi,\eta)
\]
and the leading term of $\Phi(\xi, \eta)$ is 
$-(|\xi|^{p_1}+|\eta|^{p_2})$ with $p_1$, $p_2>0$. 
They obtained the well-posedness of this equation with $p_2>1$
in $H^{s,0}(\R^2)$ for $s>-3/4$. 
They also considered the high dimensional cases and obtained 
more general results. 
There is no results for the well-posedness of (\ref{ZKB}) as far as we know. 
But the initial-boundary problem of  ZKB equation is studied by Larkin 
(\cite{Lar_arx}, \cite{Lar15}). 

Now, we give the main results in this paper. 
To begin with, we rewrite (\ref{ZKB}) to the symmetric form 
based on \cite{GH14}. 
We put
\[
v(t,x,y)=4u(16t,2(x+y),2\sqrt{3}^{-1}(x-y)). 
\]
Then, (\ref{ZKB}) can be rewritten
\begin{equation}\label{ZKB_sym}
\begin{cases}
\displaystyle \partial_{t}v+(\partial_x^3+\partial_y^3)v-(\partial_x+\partial_y)^2v=(\partial_x+\partial_y)(v^2),\\
v(0,x,y)=v_0(x,y):=4u_{0}(2(x+y),2\sqrt{3}^{-1}(x-y)). 
\end{cases}
\end{equation}
We note that the well-posedness of (\ref{ZKB}) in $H^s(\R^2)$ is 
equivalent to the well-posedness of (\ref{ZKB_sym}) in $H^s(\R^2)$. 
Therefore, we consider (\ref{ZKB_sym}) instead of (\ref{ZKB}). 
\begin{thm}\label{LWP}
\ Let $s>-\frac{1}{2}$. Then (\ref{ZKB_sym}) is locally well-posed in $H^s(\R^2)$. 
(Therefore (\ref{ZKB}) is also locally well-posed in $H^s(\R^2)$.) 
More precisely, for any $v_0\in H^s(\R^2)$, there exist $T>0$, 
and an unique solution $v\in X^{s,\frac{1}{2},1}_T\ (\hookrightarrow C([0,T];H^s(\R^2))$\ 
$($See, Definition~\ref{FRN}$)$
 to (\ref{ZKB_sym}) in $[0,T]$. 
Furthermore, 
the data-to-solution map is Lipschitz continuous from 
$H^s(\R^2)$ to $C([0,T];H^s(\R^2))$. 
\end{thm}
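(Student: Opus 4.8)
The plan is to prove Theorem~\ref{LWP} by the standard Fourier restriction norm (Bourgain space) method adapted to dispersive–dissipative equations, as pioneered by Molinet and Ribaud for the 1D KdV–Burgers equation. Writing the linear symbol of \eqref{ZKB_sym} as $\omega(\xi,\eta)=\xi^3+\eta^3$ and the dissipative symbol as $p(\xi,\eta)=(\xi+\eta)^2$, one works in a space $X^{s,b,1}_T$ built from the weight $\langle i(\tau-\omega(\xi,\eta))+p(\xi,\eta)\rangle^{b}$ together with an additional low-modulation $\ell^1$ summability (the superscript $1$) that is needed to close the argument at the endpoint $b=1/2$ and to gain the embedding into $C([0,T];H^s)$. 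The solution will be produced as a fixed point of the Duhamel operator
\[
\Phi(v)(t)=\psi(t)e^{-tp(D)}e^{-t\omega(D)}v_0
-\psi_T(t)\int_0^t e^{-(t-t')p(D)}e^{-(t-t')\omega(D)}(\partial_x+\partial_y)(v^2)(t')\,dt',
\]
with suitable smooth time cut-offs $\psi,\psi_T$. The linear estimates (homogeneous and inhomogeneous/Duhamel) in $X^{s,1/2,1}_T$ are essentially structural: the parabolic part $p(D)$ gives the gain of one derivative in the low-frequency--high-modulation regime and a smoothing factor $\langle\xi+\eta\rangle^{-1}$, exactly as in \cite{MR02} and \cite{MV}; these follow from elementary properties of the weight and of multipliers of the form $\langle z\rangle^{-1}(1-e^{-tz})$.

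The heart of the matter is the bilinear estimate
\[
\|(\partial_x+\partial_y)(v_1v_2)\|_{X^{s,-1/2,1}_T}\lesssim \|v_1\|_{X^{s,1/2,1}_T}\|v_2\|_{X^{s,1/2,1}_T},\qquad s>-\tfrac12 .
\]
I would prove this by duality and dyadic (Littlewood--Paley) decomposition in both spatial frequency and modulation, reducing it to a sum of trilinear convolution integrals. The key algebraic identity is the resonance relation: on the support of the convolution, the sum of the three dispersive modulation variables equals (up to sign) $\omega(\xi,\eta)-\omega(\xi_1,\eta_1)-\omega(\xi_2,\eta_2)$, which in the symmetrized variables factors through $(\xi+\eta)$-type and $(\xi-\xi_1)$, $(\xi-\xi_2)$-type differences (this factorization is the payoff of passing to \eqref{ZKB_sym}). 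Two mechanisms then control every region: (i) when the resonance function is large, one of the three modulation weights is large, and after using $\langle i(\tau-\omega)+p\rangle\gtrsim\langle\tau-\omega\rangle$ one recovers enough powers of the largest frequency to beat the derivative loss and the negative Sobolev weight; (ii) when the resonance function is small — the genuinely resonant interactions — one has instead that $p(\xi,\eta)$ or $p$ of one of the inputs is comparably large (the ``$(\partial_x+\partial_y)$'' degeneracy set $\{\xi+\eta=0\}$ is precisely where the parabolic symbol vanishes, but there the derivative factor $\partial_x+\partial_y$ also vanishes), so the dissipative weight $\langle i(\tau-\omega)+p\rangle\gtrsim\langle\xi+\eta\rangle^2$ absorbs the loss. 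I expect the bookkeeping of the high-high$\to$low and high-low$\to$high regimes, combined with the $\ell^1$-modulation summation, to be the technically heaviest part, and the place where the threshold $s>-1/2$ (rather than something lower, as in 1D) is dictated by the two-dimensional $L^2$-convolution/Strichartz counting.

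Once the bilinear estimate is in hand, the rest is routine: choose $T$ small so that $\Phi$ is a contraction on a ball in $X^{s,1/2,1}_T$, obtain existence and uniqueness of the fixed point, use the bilinear estimate again with a difference of two solutions to get Lipschitz dependence of the data-to-solution map, and invoke the embedding $X^{s,1/2,1}_T\hookrightarrow C([0,T];H^s(\R^2))$ (which is why the $\ell^1$ refinement is included) to land the solution in $C([0,T];H^s)$. I would also record the elementary estimate $\|\psi(t)e^{-tp(D)}e^{-t\omega(D)}v_0\|_{X^{s,1/2,1}_T}\lesssim\|v_0\|_{H^s}$ separately, since the dissipative semigroup, unlike the unitary group, requires a short additional computation splitting into $p(\xi,\eta)\lesssim1$ and $p(\xi,\eta)\gtrsim1$. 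No step other than the bilinear estimate presents a conceptual difficulty; the main obstacle, and the one where the regularity threshold is decided, is controlling the resonant region $\{\xi+\eta\approx0\}$ where the dispersive smoothing is absent and only the parabolic symbol can compensate.
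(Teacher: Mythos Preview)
Your proposal follows the same architecture as the paper: contraction in $X^{s,1/2,1}_T$ via linear estimates for $W(t)$ and the Duhamel operator (Propositions~\ref{lin_est}, \ref{duam_est}), the bilinear estimate (Proposition~\ref{bilin_est}), and a standard fixed-point. One detail the paper makes explicit and you leave implicit is that the bilinear estimate is proved with $X^{s,(1-\delta)/2,1}$ on the right-hand side, and a separate time-localization lemma (Lemma~\ref{sol_loc}) then converts this slack into the small factor $T^{\mu}$ needed for contraction.

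Your mechanism (ii), however, oversimplifies the resonant case and would not close as stated. It is \emph{not} true that ``resonance small $\Rightarrow$ some dissipative weight $p$ is large'': in the high--high--high region $N\sim N_1\sim N_2$ with all three frequencies near the diagonal $|\xi|\sim|\eta|$ (the paper's Case~3, piece $I_{2,2}$), the resonance $3(\xi\xi_1\xi_2+\eta\eta_1\eta_2)$ can vanish while all three $|\xi_j+\eta_j|$ are simultaneously small. You are right that the derivative factor $M=|\xi+\eta|$ is bounded by $\max(M_1,M_2)$ and is then absorbed by a dissipative input weight, but this by itself does not produce the required decay in $N_1$. What the paper actually uses here is the refined $L^4$ Strichartz estimate $\|D_x^{1/8}D_y^{1/8}U(t)f\|_{L^4}\lesssim\|f\|_{L^2}$ (Proposition~\ref{mod_Stri}), which gains $N^{-1/4}$ on the diagonal region, together with a bilinear Strichartz estimate (Proposition~\ref{BSE}) gaining $K^{-1/2}$ for the off-diagonal pieces $I_{i,j}$ with $i\neq j$. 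Your closing remark about ``two-dimensional Strichartz counting'' hints that you anticipate something of this sort, but the explicit dichotomy (i)/(ii) does not cover this case, and it is precisely here that the threshold $s>-1/2$ is determined.
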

\begin{thm}\label{GWP}
Let $s>-\frac{1}{2}$. For any $v_0\in \widetilde{H}^{s}(\R^2)$, the solution $v$ obtained in Theorem~\ref{LWP}
can be extended globally in time and $v$ belongs to $C((0,\infty );\widetilde{H}^{\infty}(\R^2))$, 
where $\widetilde{H}^s(\R^2)$ is the completion of the Schwartz class 
$\mathcal{S} (\R^2)$ with the norm $\|f\|_{\widetilde{H}^s}=\|\langle \xi +\eta \rangle^s \widehat{f}(\xi,\eta)\|_{L^2_{\xi\eta}}$, 
and $\widetilde{H}^{\infty}(\R^2)=\bigcap_{s\in \R}\widetilde{H}^{s}(\R^2)$. 
\end{thm}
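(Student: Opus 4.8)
The proof rests on three ingredients: a \emph{parabolic smoothing} effect coming from the dissipative term $-(\partial_x+\partial_y)^2v$, an \emph{a priori $L^2$ bound}, and the fact that the lifespan in Theorem~\ref{LWP} depends only on the size of the data. Note first that the dissipation only smooths in the $(\partial_x+\partial_y)$-direction, which is why the conclusion is stated in the scale $\widetilde{H}^\sigma$ (weight $\langle\xi+\eta\rangle^\sigma$) rather than in the full $H^\sigma$. Since $\langle\xi+\eta\rangle^0\equiv 1$ we have $\widetilde{H}^0=L^2$; moreover $\widetilde{H}^s\hookrightarrow L^2$ for $s\ge0$ and $\widetilde{H}^s\hookrightarrow H^s$ for $-\tfrac12<s<0$, so in every case $v_0\in\widetilde{H}^s$ produces, via Theorem~\ref{LWP} applied at regularity $\min(s,0)>-\tfrac12$, a local solution $v\in X^{s',\frac12,1}_T\hookrightarrow C([0,T];H^{s'})$ on an interval $[0,T]$ with $T=T(\|v_0\|_{\widetilde H^s})$. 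We work throughout with these dissipative Fourier restriction spaces (and their $\langle\xi+\eta\rangle$-weighted variants), for which the bilinear estimate established in the proof of Theorem~\ref{LWP} is available.

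\textbf{Step 1: smoothing into $\widetilde{H}^\infty$.} Let $W(t)$ denote the linear dissipative propagator, whose symbol is $e^{it(\xi^3+\eta^3)}e^{-t(\xi+\eta)^2}$. For every $\theta\ge 0$ one has $\langle\xi+\eta\rangle^\theta e^{-t(\xi+\eta)^2}\lesssim t^{-\theta/2}$, hence the linear smoothing estimates
\[
\|W(t)\varphi\|_{\widetilde{H}^{\sigma+\theta}}\lesssim t^{-\theta/2}\|\varphi\|_{\widetilde{H}^{\sigma}},
\]
together with its analogue in the dissipative restriction norm. Inserting this into the Duhamel formula and combining it with the bilinear estimate (used with a small regularity gap $\theta>0$ between the inputs, kept at the base level $\sigma$, and the output), one obtains a bound of the form $\|v(t)\|_{\widetilde{H}^{\sigma+\theta}}\lesssim t^{-\theta/2}\,\Phi(\|v\|_{X^{\sigma,\frac12,1}_T})$ on $(0,T)$. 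Bootstrapping $\sigma\mapsto\sigma+\theta\mapsto\sigma+2\theta\mapsto\cdots$ (restarting each step from a slightly later time so that the $t^{-\theta/2}$ singularities stay harmless) reaches any target regularity in finitely many steps, so that $v\in C((0,T);\widetilde{H}^{\infty})$. \emph{This is the main obstacle:} one must check that the bilinear/trilinear estimates behind Theorem~\ref{LWP} are robust enough to carry a positive regularity gap and to be iterated, and that the time-singularities accumulated along the bootstrap remain locally integrable.

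\textbf{Step 2: global $L^2$ bound and globalization.} Since $v$ is $\R$-valued, multiplying (\ref{ZKB_sym}) by $v$ and integrating over $\R^2$ gives, using that $\partial_x^3+\partial_y^3$ is skew-adjoint and $\int v(\partial_x+\partial_y)(v^2)=-\tfrac13\int(\partial_x+\partial_y)(v^3)=0$,
\[
\frac12\frac{d}{dt}\|v(t)\|_{L^2}^2+\|(\partial_x+\partial_y)v(t)\|_{L^2}^2=0,
\]
so that $t\mapsto\|v(t)\|_{L^2}$ is nonincreasing; the integrations by parts are legitimate because of the regularity gained in Step~1 (or by approximation with smooth data). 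Fix $t_0\in(0,T)$. Then $v(t_0)\in L^2$ with $\|v(t_0)\|_{L^2}\le\|v(t_0)\|_{\widetilde{H}^0}$ controlled, and since the lifespan in Theorem~\ref{LWP} at regularity $0$ depends only on the $L^2$-norm of the data, we may reapply it on $[t_0,t_0+\tau],[t_0+\tau,t_0+2\tau],\dots$ with one and the same $\tau>0$, extending $v$ to $C([t_0,\infty);L^2)$; uniqueness in $X^{s,\frac12,1}$ identifies this extension with the original solution. Finally, for every $t_1>0$ and every $\sigma\in\R$, apply the smoothing of Step~1 on the interval $[t_1/2,t_1]$ to the (now $L^2$) solution to conclude $v(t_1)\in\widetilde{H}^{\sigma}$; letting $\sigma\to\infty$ and $t_1$ range over $(0,\infty)$ yields $v\in C((0,\infty);\widetilde{H}^{\infty})$, the continuity in time following from the Duhamel representation and the smoothing estimates on compact subintervals of $(0,\infty)$. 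This completes the proof.
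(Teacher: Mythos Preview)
Your overall architecture—parabolic smoothing into $\widetilde H^\infty$, then $L^2$ monotonicity, then uniform re-iteration—is exactly the paper's strategy, and Step~2 matches the paper's argument almost verbatim. The difference, and the place where your proposal has a real gap, is the mechanism for Step~1.

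You try to gain regularity by inserting the pointwise semigroup bound $\|W(t)\varphi\|_{\widetilde H^{\sigma+\theta}}\lesssim t^{-\theta/2}\|\varphi\|_{\widetilde H^\sigma}$ into Duhamel and then appealing to ``the bilinear estimate with a regularity gap''. But the bilinear estimate available from Theorem~\ref{LWP} lives in the Bourgain-type spaces $X^{s,\pm\frac12,1}$ (same $s$ on both sides) and does \emph{not} give a pointwise-in-time product bound of the form $\|(\partial_x+\partial_y)(v^2)(t')\|_{\widetilde H^\sigma}\lesssim \ldots$, which is what your $\int_0^t (t-t')^{-\theta/2}\,dt'$ scheme needs. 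You yourself flag this as ``the main obstacle'', and it is: with the estimates actually proved in the paper, this route does not close.

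The paper bypasses this by redoing the local theory in a variant space $\widetilde X^{s,\frac12,1}$ in which the isotropic weight $\langle N\rangle^s$ is replaced by the anisotropic weight $\langle M\rangle^s$ (with $M\sim|\xi+\eta|$). The linear and Duhamel estimates transfer verbatim; the bilinear estimate (Proposition~\ref{bilin_est_gwp}) requires going back through the case analysis of Proposition~\ref{bilin_est} and checking that the $\langle M\rangle^s$ weights can be handled, which is nontrivial but works for $-\tfrac12<s<0$. The payoff is the automatic embedding
\[
\widetilde X^{s,\frac12,1}_T\hookrightarrow L^2\big([0,T];\widetilde H^{s+1}\big),
\]
which follows from $\langle M\rangle^{s+1}\lesssim\langle M\rangle^s\langle M^2+L\rangle^{1/2}$. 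This is the smoothing you want, packaged so that it is compatible with the bilinear estimate: once the solution is in $\widetilde X^{s,\frac12,1}_T$, there exists $t_0\in(0,T')$ with $v(t_0)\in\widetilde H^{s+1}$; restart there, repeat, and reach $\widetilde H^\infty$ at any $T'\in(0,T)$. (The paper also notes that $X^{s,\frac12,1}_T\not\hookrightarrow L^2([0,T];H^{s+1})$, so passing to the tilde spaces is essential, not cosmetic.) Thus your Step~1 should be replaced by: re-run the contraction in $\widetilde X^{s,\frac12,1}$ and use the $L^2_t\widetilde H^{s+1}$ embedding to gain one $\widetilde H$-derivative per iteration.
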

\begin{rem}
{\rm (i)}\ Although (\ref{ZKB}) does not have 
the dissipative term with respect to $y$,  
the well-posedness of (\ref{ZKB}) is obtained in isotropic 
Sobolev space $H^{s}(\R^2)$
for lower regularity than both (\ref{ZK}) and (\ref{parab}). \\
{\rm (ii)}\ Theorem~\ref{GWP} says that (\ref{ZKB}) is 
globally well-posed in $H^{s,0}(\R^2)$ for $s>-\frac{1}{2}$. 
\end{rem}

To obtain Theorem~\ref{LWP}, we have to treat the dissipative term carefully, 
because the symbol $(\xi+\eta)^2$ is vanished on the line $\{(\xi, -\xi)|\xi\in \R\}$. 
But the nonlinear term is also vanished on the same line. 
It helps us to obtain the key bilinear estimate (Proposition~\ref{bilin_est}). 
We will use the iteration argument with the Fourier restriction norm 
to obtain the local well-posedness. 
While, the global well-posedness will be proved by using 
the smoothing effect from the dissipative term and 
non-increasing of $L^2$-norm of the solution. 

\kuuhaku \\
\noindent {\bf Notation.} 
We denote the spatial Fourier transform by\ \ $\widehat{\cdot}$\ \ or $\F_{xy}$, 
the Fourier transform in time by $\F_{t}$, and the Fourier transform in all variables by\ \ $\widetilde{\cdot}$\ \ or $\F$. 
The operator $U(t)=e^{-t(\partial_x^3+\partial_y^3)}$ and $W(t)=e^{|t|(\partial_x+\partial_y)^2}e^{-t(\partial_x^3+\partial_y^3)}$ on $H^{s}(\R^2)$
is given as a Fourier multiplier
\[
\F_{xy}[U(t)f](\xi,\eta)=e^{it(\xi^3+\eta^3)}\widehat{f}(\xi ),\ \ \F_{xy}[W(t)f](\xi,\eta)=e^{-|t|(\xi+\eta)^2}e^{it(\xi^3+\eta^3)}\widehat{f}(\xi ). 
\]
$U(t)$ and $W(t)$ give a solution to
\[
\partial_t u+(\partial_x^3+\partial_y^3)u=0
\]
and
\[
\partial_t u+(\partial_x^3+\partial_y^3)u-{\rm sgn}(t)(\partial_x+\partial_y)^2u=0
\]
respectively. 
We note that $\F[U(-\cdot )F(\cdot )](\tau ,\xi ,\eta )=\widetilde{F}(\tau +\xi^3+\eta^3,\xi,\eta )$. 

We will use $A\lesssim B$ to denote an estimate of the form $A \le CB$ for some constant $C$ and write $A \sim B$ to mean $A \lesssim B$ and $B \lesssim A$. 
We will use the convention that capital letters denote dyadic numbers, e.g. $N=2^{n}$ for $n\in \Z$ and for a dyadic summation we write
$\sum_{N}a_{N}:=\sum_{n\in \Z}a_{2^{n}}$, $\sum_{N\geq N'}a_{N}:=\sum_{n\in \Z, 2^{n}\geq N'}a_{2^{n}}$, and  $\sum_{N\leq N'}a_{N}:=\sum_{n\in \Z, 2^{n}\leq N'}a_{2^{n}}$ for brevity. 
Let $\chi \in C^{\infty}_{0}((-2,2))$ be an even, non-negative function such that $\chi (t)=1$ for $|t|\leq 1$. 
We define $\varphi (t):=\chi (t)-\chi (2t)$ and $\varphi_{N}(t):=\varphi (N^{-1}t)$. Then, $\sum_{N}\varphi_{N}(t)=1$ whenever $t\neq 0$. 
We define the projections
\[
\begin{split}
&\widehat{P_{N}u}(\xi ,\eta):=\varphi_{N}(|(\xi,\eta )| )\widehat{u}(\xi,\eta),\ \widehat{P_{N,M}u}(\xi ,\eta):=\varphi_{N,M}(\xi ,\eta )\widehat{u}(\xi ,\eta),\\
&\widetilde{Q_{L}u}(\tau ,\xi ,\eta):=\varphi_{L}(\tau -\xi^3-\eta^3)\widetilde{u}(\tau ,\xi ,\eta), 
\end{split}
\]
where $\varphi_{N,M}(\xi ,\eta):=\varphi_{N}(|(\xi ,\eta )|)\varphi_M(\xi+\eta )$. 

The rest of this paper is planned as follows.
In Section 2, we will give the definition of the solution space, and prove the linear estimates. 
In Section 3, we will prove the bilinear estimate which is main part of this paper. 
In Section 4, we will give the proof of the well-posedness (Theorems~\ref{LWP} and ~\ref{GWP}). 

%
%
\section{Function space and linear estimate}
In this section, we define the function space, 
and prove the estimate for linear solution and Duhamel term. 
First, we consider the standard Fourier restriction norm $\|\cdot \|_{X^{s,b}}$ for (\ref{ZKB_sym}) defined by
\[
\|u\|_{X^{s,b}}=\|\langle |(\xi ,\eta )|\rangle^s\langle (\xi +\eta )^2+i(\tau -\xi^3-\eta^3)\rangle^b\widetilde{u}(\tau ,\xi ,\eta )\|_{L^2_{\tau \xi \eta}}.
\]
Such Fourier restriction norm was introduced by J. Bourgain (\cite{Bo93}) 
for the nonlinear Schr\"odinger equation and the KdV equation. 
Let $\psi \in C^{\infty}(\R )$ denotes a cut-off function such that $\supp \psi \subset [-2,2]$, $\psi =1$ on $[-1,1]$. 
We note that, the estimate
\[
\|\psi (t)W(t)u_0\|_{X^{s,b}}\lesssim \|\langle |(\xi ,\eta )|\rangle^{s}\langle \xi +\eta \rangle^{b-\frac{1}{2}} \widehat{u_0}(\xi ,\eta  )\|_{L^2_{\xi \eta}}
\]
holds. Therefore, if $b\le 1/2$, then $\psi W(\cdot )u_0\in X^{s,b}$ for $u_0\in H^s$. 
But the embedding $X^{s,b}\hookrightarrow C(\R;H^s(\R^2))$ 
does not hold for $b\le 1/2$. 
Therefore, we use the Besov type Fourier restriction norm defined as follows.
\begin{defn}\label{FRN}
Let $s\in \R$, $b\in \R$. \\
(i)\ We define the function space $X^{s,b,1}$ as the completion of the Schwartz class ${\mathcal S}(\R_{t}\times \R^2_{x,y})$ with the norm
\[
\|u\|_{X^{s,b,1}}=\left\{\sum_{N\in 2^{\Z}}\sum_{M\in 2^{\Z}}\left(\sum_{L\in 2^{\Z}}\langle N\rangle^s\langle M^2+L\rangle^{b}\|P_{N,M}Q_{L}u\|_{L^2_{txy}}\right)^2\right\}^{\frac{1}{2}}.
\]
(ii)\ For $T>0$, we define the time localized space $X^{s,b,1}_T$ as
\[
X^{s,b,1}_{T}=\{u|_{[0,T]}|u\in X^{s,b,1}\}
\]
with the norm
\[
\|u\|_{X^{s,b,1}_T}=\inf \{\|v\|_{X^{s,b,1}}|v\in X^{s,b,1},\ v|_{[0,T]}=u|_{[0,T]}\}.
\]
\end{defn}
\begin{rem}
(i)\ The embedding $X^{s,\frac{1}{2},1}_T\hookrightarrow C([0,T];H^s(\R^2))$ holds. \\
(ii)\ The size of $|\xi +\eta |$, which comes from the symbol of the dissipative term 
of (\ref{ZKB_sym}), 
 is not decided by the size of $|(\xi ,\eta )|$. 
Therefore, to use the dissipative effect strictly,  
we focus on not only $|(\xi ,\eta )|\sim N$, but also $|\xi +\eta |\sim M$. 
This is a different point from 1D case. \\
(iii)\ We can assume $\sum_{M\in 2^{\Z}}=\sum_{M\lesssim N}$ since $|\xi +\eta|\lesssim |(\xi ,\eta )|$ holds.
\end{rem}
We choose $X^{s,\frac{1}{2},1}_T$ as the solution space. 
Now, we define the operator ${\mathcal K}$ and ${\mathcal L}$ by
\[
\begin{split}
&{\mathcal K}F(t)(\xi,\eta ):=\int_{\R}\frac{e^{it\tau}-e^{-|t|(\xi +\eta )^2}}{(\xi +\eta)^2+i\tau}\F[U(-\cdot )F(\cdot )](\tau ,\xi ,\eta )d\tau\\
&{\mathcal L}F(t):=U(t)\int_{\R^2}e^{ix\xi}e^{iy\eta}{\mathcal K}F(t)(\xi,\eta )d\xi d\eta =U(t)\F_{x,y}^{-1}[{\mathcal K}F(t)].
\end{split}
\]
Then, we note that
\[
{\mathcal L}F(t)=\int_0^t W(t-t')F(t')dt'
\]
holds for $t\ge 0$ and the integral form of (\ref{ZKB_sym}) on $[0,\infty)$ is given by
\begin{equation}\label{ZKB_sym_int}
\begin{split}
v(t)&=W(t)v_0+\int_0^tW(t-t')(\partial_x+\partial_y)(v(t')^2)dt'\\
&=W(t)v_0+{\mathcal L}((\partial_x+\partial_y)v)(t).
\end{split}
\end{equation}
\begin{prop}\label{lin_est}
Let $s\in \R$. There exists $C_1>0$, such that for any $u_0\in H^s(\R^2)$, we have
\[
\|\psi (t)W(t)u_0\|_{X^{s,\frac{1}{2},1}}\le C_1\|u_0\|_{H^s}. 
\]
\end{prop}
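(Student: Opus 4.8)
The plan is to compute the Fourier transform of $\psi(t)W(t)u_0$ explicitly and then directly estimate the $X^{s,1/2,1}$ norm. First I would write
\[
\F[\psi(t)W(t)u_0](\tau,\xi,\eta)=\widehat{u_0}(\xi,\eta)\,\F_t\!\left[\psi(t)e^{-|t|(\xi+\eta)^2}\right](\tau-\xi^3-\eta^3),
\]
so that, after the change of variable $\sigma=\tau-\xi^3-\eta^3$, the weight $\langle M^2+L\rangle^{1/2}$ (where $M\sim|\xi+\eta|$, $L\sim|\sigma|$) must be controlled against $\F_t[\psi(t)e^{-|t|M^2}](\sigma)$ in $L^2_\sigma$, uniformly after the $L$-summation that defines the Besov-type norm. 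The spatial weight $\langle N\rangle^s$ simply passes through, leaving exactly $\|u_0\|_{H^s}$, so the whole estimate reduces to the one-variable claim
\[
\sum_{L}\langle M^2+L\rangle^{1/2}\big\|\varphi_L(\sigma)\,\F_t[\psi(t)e^{-|t|M^2}](\sigma)\big\|_{L^2_\sigma}\lesssim 1
\]
uniformly in the dyadic parameter $M$.

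To prove that one-variable claim I would split into the low-modulation regime $L\lesssim M^2$ and the high-modulation regime $L\gtrsim M^2$. In the low regime $\langle M^2+L\rangle^{1/2}\sim\langle M\rangle$, and since $\|\F_t[\psi(t)e^{-|t|M^2}]\|_{L^2_\sigma}=\|\psi(t)e^{-|t|M^2}\|_{L^2_t}\lesssim 1$ by Plancherel, the sum over the $O(\log\langle M\rangle)$ dyadic values of $L\lesssim M^2$ would be too lossy if done crudely; instead I would use that $\|\varphi_L(\sigma)\F_t[\cdots]\|_{L^2_\sigma}$ decays in $L$ — integrating by parts in $t$ (the function $\psi(t)e^{-|t|M^2}$ is Lipschitz with a jump in derivative at $t=0$, giving $\F_t[\cdots](\sigma)=O(M^2/\sigma^2)$ for $|\sigma|\gg M^2$ and a bound $\lesssim M^{-2}\wedge \min(1,|\sigma|^{-1})$ type behavior) — so that $\langle M\rangle\cdot\|\varphi_L\F_t[\cdots]\|_{L^2}$ is summable. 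In the high regime $L\gtrsim M^2$ we have $\langle M^2+L\rangle^{1/2}\sim L^{1/2}$, and here the key is the pointwise bound: for $|t|$ away from $0$, $\psi(t)e^{-|t|M^2}$ is smooth, while the only obstruction to rapid decay of its Fourier transform is the corner at $t=0$; one computes $\F_t[e^{-|t|M^2}\psi(t)](\sigma)\lesssim \langle \sigma\rangle^{-1}$ globally plus faster decay once $|\sigma|\gg M^2$, which makes $\sum_{L\gtrsim M^2}L^{1/2}\|\varphi_L\F_t[\cdots]\|_{L^2_\sigma}$ converge.

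The main obstacle is the behaviour near the degenerate line $\xi+\eta=0$, i.e. the regime $M\to 0$ (equivalently $M^2\ll 1$): there the dissipation provides essentially no smoothing, $e^{-|t|(\xi+\eta)^2}\approx 1$, and one must check that $\psi(t)W(t)u_0$ behaves like $\psi(t)U(t)u_0$, for which the $X^{s,1/2,1}$ estimate is the standard (and true) Bourgain-space linear estimate $\|\psi(t)U(t)u_0\|_{X^{s,1/2,1}}\lesssim\|u_0\|_{H^s}$. So in practice I would treat $M\lesssim 1$ by discarding the dissipative factor (using $0<e^{-|t|M^2}\le 1$ and smoothness of the difference) and reducing to the free-evolution Besov estimate, and treat $M\gg 1$ by the modulation splitting above where the large dissipation $M^2$ is a help rather than a hindrance; the interpolation/gluing of these two regimes, keeping the constant $C_1$ uniform, is the one genuinely delicate bookkeeping point. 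I expect the free-evolution case $\|\psi(t)U(t)u_0\|_{X^{s,b,1}}\lesssim\|u_0\|_{H^s}$ for $b=1/2$ to be quotable or to follow from the same one-variable Fourier computation with $M=0$, since $\F_t[\psi](\sigma)$ is Schwartz and $\sum_L L^{1/2}\|\varphi_L\widehat\psi\|_{L^2}<\infty$.
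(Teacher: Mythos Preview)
Your reduction to the one-variable claim
\[
\sum_{L}\langle M^2+L\rangle^{1/2}\bigl\|\varphi_L(\sigma)\,\F_t[\psi(t)e^{-|t|\zeta^2}](\sigma)\bigr\|_{L^2_\sigma}\lesssim 1
\quad\text{uniformly in }|\zeta|\sim M
\]
is exactly the paper's approach; the paper then simply cites Proposition~4.1 of Molinet--Vento \cite{MV} for this estimate rather than proving it from scratch. Your sketch of a direct proof is essentially sound, with one caveat in the low-modulation regime $L\lesssim M^2$ (for $M\gg 1$): since $\F_t[\psi(t)e^{-|t|M^2}](\sigma)\sim M^{-2}$ pointwise there (think of the Poisson kernel $\tfrac{2M^2}{M^4+\sigma^2}$ smeared by $\widehat\psi$), the dyadic piece $\|\varphi_L\F_t[\cdots]\|_{L^2_\sigma}\sim L^{1/2}M^{-2}$ actually \emph{grows} in $L$ rather than decays --- but this is precisely what makes $\langle M\rangle\sum_{L\lesssim M^2}L^{1/2}M^{-2}\lesssim 1$ a convergent geometric sum, so your conclusion stands.
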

\begin{proof}
Since
\[
\left(\sum_N\sum_M\langle N\rangle^{2s}\|P_{N,M}u_0\|_{L^2_{xy}}^2\right)^{\frac{1}{2}}\sim \|u_0\|_{H^s}
\]
holds, it suffice to prove
\[
\sum_{L}\langle M^2+L\rangle^{\frac{1}{2}}\|P_{N,M}Q_{L}(\psi (t)W(t)u_0)\|_{L^2_{txy}}\lesssim \|P_{N,M}u_0\|_{L^2_{xy}}
\]
for each $N$, $M\in 2^{\Z}$. By using Plancherel's theorem, we have
\[
\begin{split}
&\|P_{N,M}Q_{L}(\psi (t)W(t)u_0)\|_{L^2_{txy}}\\
&\sim \|\varphi_{N,M}(\xi, \eta )\varphi_L(\tau )\F_t[\psi (t)e^{-|t|(\xi +\eta )^2}]\widehat{u_0}(\xi ,\eta )\|_{L^2_{\xi \eta t}}\\
&\lesssim \|P_{N,M}u_0\|_{L^2_{xy}}
\|\phi_M(\xi +\eta )\varphi_L(\tau )\F_t[\psi (t)e^{-|t|(\xi +\eta )^2}]\|_{L^{\infty}_{\xi \eta}L^2_t}\\
&=\|P_{N,M}u_0\|_{L^2_{xy}}
\|\phi_M(\zeta )\varphi_L(\tau )\F_t[\psi (t)e^{-|t|\zeta^2}]\|_{L^{\infty}_{\zeta}L^2_t},
\end{split}
\]
where $\phi_M=\varphi_{2M}+\varphi_M+\varphi_{\frac{M}{2}}$ and we used $\varphi_M=\varphi_M\phi_M$. Therefore, it suffice to prove
\begin{equation}\label{exp_besov}
\sum_{L}\langle M^2+L\rangle^{\frac{1}{2}}\|\phi_M(\zeta)\varphi_L(\tau )\F_t[\psi (t)e^{-|t|\zeta^2}]\|_{L^{\infty}_{\zeta}L^2_{\tau}}
\lesssim 1.
\end{equation}
It is obtained in the proof of Proposition\ 4.1 in \cite{MV}. 
\end{proof}
\begin{prop}\label{duam_est}
Let $s\in \R$. There exists $C_2>0$, such that for any $F\in X^{s,-\frac{1}{2},1}$, we have
\[
\left\|\psi (t){\mathcal L}F(t)\right\|_{X^{s,\frac{1}{2},1}}\le C_2\|F\|_{X^{s,-\frac{1}{2},1}}
\]
\end{prop}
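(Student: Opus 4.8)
The estimate to prove is the Duhamel (inhomogeneous) estimate
\[
\|\psi(t)\mathcal{L}F\|_{X^{s,\frac12,1}}\lesssim \|F\|_{X^{s,-\frac12,1}},
\]
and the natural approach mirrors the KdV--Burgers analysis of Molinet--Ribaud \cite{MR02} and Molinet--Vento \cite{MV}, adapted to the 2D symbol with the dissipative weight $M^2=(\xi+\eta)^2$. Since $\mathcal{L}F(t)=U(t)\mathcal{F}_{x,y}^{-1}[\mathcal{K}F(t)]$, I would start by expressing things on the Fourier side: write $f(\tau,\xi,\eta):=\mathcal{F}[U(-\cdot)F](\tau,\xi,\eta)=\widetilde F(\tau+\xi^3+\eta^3,\xi,\eta)$, so that $\|F\|_{X^{s,-1/2,1}}$ is controlled by the Besov-in-$L$ norm of $\langle N\rangle^s\langle M^2+L\rangle^{-1/2}\|\varphi_{N,M}\varphi_L(\tau) f\|_{L^2}$. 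The kernel of $\mathcal{K}$ is the familiar
\[
\frac{e^{it\tau}-e^{-|t|(\xi+\eta)^2}}{(\xi+\eta)^2+i\tau},
\]
so after multiplying by the cutoff $\psi(t)$ and conjugating by $U(t)$, the matter reduces (as in \cite{MR02}) to pointwise-in-$(\xi,\eta)$ bounds on a time–Fourier transform, uniformly over the frequency block where $|(\xi,\eta)|\sim N$, $|\xi+\eta|\sim M$.

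**Main steps.** First, fix $N,M$ and reduce to a one-variable statement in $\zeta=\xi+\eta$ exactly as in Proposition~\ref{lin_est}: the spatial-frequency localization can be pulled out in $L^\infty_{\xi\eta}$ because $\widehat{F}$ is being multiplied by a multiplier depending only on $\zeta$ and $\tau$. Second, split the kernel into its two pieces. The term $e^{it\tau}/((\xi+\eta)^2+i\tau)$, after conjugation, produces something like $\psi(t)$ times $\int e^{it\tau}\langle M^2+L\rangle^{-1}\,(\cdots)\,d\tau$; the term $e^{-|t|(\xi+\eta)^2}/((\xi+\eta)^2+i\tau)$ contributes a $W$-type profile whose $X^{s,1/2,1}$-norm is controlled by the already-proven exponential estimate \eqref{exp_besov}. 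Concretely, I would decompose $\mathcal{K}F = \mathcal{K}_1F + \mathcal{K}_2 F$ where $\mathcal{K}_2 F$ carries the factor $\psi(t)e^{-|t|\zeta^2}$ and is handled by Proposition~\ref{lin_est} applied to the "initial data" $\mathcal{F}_{x,y}^{-1}\big[\int \frac{f(\tau,\cdot)}{\zeta^2+i\tau}d\tau\big]$, whose $H^s$-norm is $\lesssim \|F\|_{X^{s,-1/2,1}}$ after a Cauchy–Schwarz in $\tau$ against $\langle M^2+L\rangle^{1/2}\langle M^2+\tau\rangle^{-1}\in \ell^2_L L^2_\tau$ — here the dissipative gain $M^2$ in the denominator is what makes this summable. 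Third, for $\mathcal{K}_1 F$ one localizes $\tau-\xi^3-\eta^3$ to dyadic size $L'$ on the output side and estimates $\|\varphi_{L'}(\tau)\mathcal{F}_t[\psi(t)\cdot e^{it\theta}]\|_{L^2_t}$ against the input localization at scale $L$, using that $\mathcal{F}_t[\psi]$ is Schwartz; the two regimes $L'\lesssim L$ and $L'\gg L$ are summed separately, with the $\langle M^2+L\rangle$ weights converting the off-diagonal decay of $\widehat\psi$ into an absolutely convergent double dyadic sum. All of this is literally the 2D transcription of Proposition~4.1 in \cite{MV}, the only new feature being that $M$ now ranges over $M\lesssim N$ and must be summed in $\ell^2$ alongside $N$, which is harmless since the $M$-dependence enters only through the benign weight $\langle M^2+L\rangle^{\pm1/2}$.

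**The obstacle.** The genuinely delicate point is the low-frequency / low-modulation regime where $M^2\lesssim 1$ and $L\lesssim 1$ simultaneously, i.e. near the line $\xi+\eta=0$ where the dissipative symbol degenerates: there $\langle M^2+L\rangle\sim 1$ and one loses the dissipative smoothing entirely, so the estimate must close using only the oscillatory structure of the KdV part and the time-cutoff $\psi$, exactly as in the pure KdV–Burgers case at regularity $s=-1$. Getting the Besov summation over $L$ (not merely $\ell^2$ but the $\ell^1$-type inner sum in Definition~\ref{FRN}) to converge uniformly down to this degenerate corner is where one must be careful; the resolution, following \cite{MV}, is that the kernel $((\xi+\eta)^2+i\tau)^{-1}$ with the $e^{it\tau}-e^{-|t|\zeta^2}$ numerator is actually bounded (no singularity at $\zeta=\tau=0$, since the numerator also vanishes there to first order), and this cancellation, combined with $\widehat\psi\in\mathcal{S}$, supplies just enough decay. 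Modulo that cancellation check, the proof is routine and I would simply cite the corresponding computation in \cite{MV} for the one-dimensional reduction, indicating that the $(\xi,\eta)\mapsto\xi+\eta$ reduction makes the 2D case formally identical.
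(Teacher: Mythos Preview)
Your overall framework is right: the argument is the 2D transcription of Lemma~4.1 in \cite{MV}, with the only structural change being the extra $\ell^2_M$ summation, which is indeed harmless. You also correctly identify the delicate corner $M^2+|\tau|\ll 1$ and the cancellation in the numerator $e^{it\tau}-e^{-|t|\zeta^2}$ as the key to handling it.

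The gap is that your concrete two-way split $\mathcal{K}F=\mathcal{K}_1F+\mathcal{K}_2F$ destroys precisely that cancellation, and neither piece is separately controllable. For $\mathcal{K}_2$ you claim the ``initial data'' $\widehat{v_0}=\int f(\tau,\cdot)/(\zeta^2+i\tau)\,d\tau$ satisfies $\|v_0\|_{H^s}\lesssim\|F\|_{X^{s,-1/2,1}}$ by Cauchy--Schwarz; but for $|\zeta|\sim M\ll 1$ the weight integral you need is
\[
\int_{|\tau|\lesssim 1}\frac{\langle\zeta^2+|\tau|\rangle}{(\zeta^2+|\tau|)^2}\,d\tau\sim M^{-2},
\]
so the bound picks up a divergent factor $M^{-1}$ that cannot be absorbed. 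The same divergence sits in $\mathcal{K}_1$ near $\tau=0$. Recognising the cancellation in your obstacle paragraph is not enough: once you have split into $\mathcal{K}_1+\mathcal{K}_2$, there is nothing left to cancel, and ``modulo that cancellation check'' is hiding the actual mechanism.

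The paper (following \cite{MV}) repairs this with a \emph{four}-way split: first cut $|\tau|\le1$ versus $|\tau|\ge1$, and on the low-$|\tau|$ region write $e^{it\tau}-e^{-|t|\zeta^2}=(e^{it\tau}-1)+(1-e^{-|t|\zeta^2})$. Each of these two numerators is $O(|\tau|)$ (resp.\ $O(\zeta^2)$) uniformly on $\supp\psi$, which exactly kills the $(\zeta^2+|\tau|)^{-1}$ singularity; the resulting pieces $K_1,K_2$ are then handled by Taylor expansion in $t$. On $|\tau|\ge1$ the denominator is already $\gtrsim 1$ and the two pieces $K_3,K_4$ are treated separately, $K_3$ by the convolution/paraproduct argument you sketch and $K_4$ by \eqref{exp_besov}. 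You should replace your two-way split by this finer decomposition.
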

\begin{proof}
We use the argument in the proof of Lemma 4.1 in \cite{MV}. 
Since
\[
\|P_{N,M}Q_L(\psi (t){\mathcal L}F(t))\|_{L^2_{txy}}
\sim \|\varphi_{N,M}(\xi,\eta)\varphi_L(\tau )\F_t[\psi {\mathcal K}F](\tau,\xi,\eta)\|_{L^2_{\xi\eta\tau}},
\]
it suffice to show that
\begin{equation}\label{Duamel__est_pf}
\begin{split}
&\sum_L\langle M^2+L\rangle^{\frac{1}{2}}\|\varphi_{N,M}(\xi,\eta)\varphi_L(\tau )\F_t[\psi {\mathcal K}F](\tau,\xi,\eta)\|_{L^2_{\xi\eta\tau}}\\
&\lesssim \sum_L\langle M^2+L\rangle^{-\frac{1}{2}}\|\varphi_{N,M}(\xi,\eta)\varphi_L(\tau )\F[U(-\cdot )F(\cdot )](\tau,\xi,\eta)\|_{L^2_{\xi\eta\tau}}
\end{split}
\end{equation}
We put $w(t)=U(-t)F(t)$ and split $\psi{\mathcal K}F$ into $K_{1}+K_{2}+K_{3}-K_{4}$, where
\[
\begin{split}
K_{1}(t,\xi,\eta)&=\psi(t)\int_{|\tau |\le 1}\frac{e^{it\tau}-1}{(\xi +\eta)^2+i\tau}\widetilde{w}(\tau ,\xi ,\eta )d\tau,\\
K_{2}(t,\xi,\eta)&=\psi(t)\int_{|\tau |\le 1}\frac{1-e^{-|t|(\xi+\eta)^2}}{(\xi +\eta)^2+i\tau}\widetilde{w}(\tau ,\xi ,\eta )d\tau,\\
K_{3}(t,\xi,\eta)&=\psi(t)\int_{|\tau |\ge 1}\frac{e^{it\tau}}{(\xi +\eta)^2+i\tau}\widetilde{w}(\tau ,\xi ,\eta )d\tau,\\
K_{4}(t,\xi,\eta)&=\psi(t)\int_{|\tau |\ge 1}\frac{e^{-|t|(\xi+\eta)^2}}{(\xi +\eta)^2+i\tau}\widetilde{w}(\tau ,\xi ,\eta )d\tau.
\end{split}
\]
Furthermore, we put $w_{N,M}=P_{N,M}w$. 
We note that $\widetilde{w}_{N,M}(\tau,\xi,\eta)=\phi_M(\xi+\eta)\widetilde{w}_{N,M}(\tau,\xi,\eta)$ since $\varphi_M=\varphi_M\phi_M$.\\
\kuuhaku \\
\underline{Estimate for $K_1$}

By using the Taylor expansion, we have
\[
\begin{split}
&\|\varphi_{N,M}(\xi,\eta)\varphi_L(\tau )\F_t[K_1](\tau,\xi,\eta)\|_{L^2_{\xi\eta\tau}}\\
&\lesssim \sum_{n=1}^{\infty}\frac{1}{n!}\left\|\left(\int_{|\tau |\le 1}\frac{|\tau |^n|\widetilde{w}_{N,M}(\tau,\xi,\eta)|}{(\xi+\eta)^2+|\tau|}d\tau \right)
\|\varphi_L(\tau)\F_t[t^n\psi (t)](\tau)\|_{L^2_\tau}\right\|_{L^2_{\xi\eta}}.
\end{split}
\]
By the Cauchy-Schwartz inequality, we obtain
\[
\begin{split}
&\int_{|\tau |\le 1}\frac{|\tau |^n|\widetilde{w}_{N,M}(\tau,\xi,\eta)|}{(\xi+\eta)^2+|\tau|}d\tau \\
&\lesssim 
\left(\int_{|\tau |\le 1}\frac{|\tau |^2\langle(\xi+\eta)^2+|\tau |\rangle}{((\xi+\eta)^2+|\tau |)^2}|\phi_M(\xi +\eta)|^2d\tau\right)^{\frac{1}{2}}
\left(\int_{|\tau |\le 1}\frac{|\widetilde{w}_{N,M}(\tau ,\xi ,\eta )|^2}{ \langle(\xi+\eta)^2+|\tau |\rangle}d\tau\right)^{\frac{1}{2}}\\
&\lesssim \langle M\rangle^{-1} \sum_{L}\langle M^2+L\rangle^{-\frac{1}{2}}\|\varphi_L(\tau )\widetilde{w}_{N,M}(\tau, \xi, \eta)\|_{L^2_{\tau}}
\end{split}
\]
for $n\ge 1$. Therefore, we get 
\[
\begin{split}
&\sum_L\langle M^2+L\rangle^{\frac{1}{2}}\|\varphi_{N,M}(\xi,\eta)\varphi_L(\tau )\F_t[K_1](\tau,\xi,\eta)\|_{L^2_{\xi\eta\tau}}\\
&\lesssim \sum_{n=1}^{\infty}\frac{1}{n!}
\||t|^n\psi \|_{B^{\frac{1}{2}}_{2,1}}
\sum_L\langle M^2+L\rangle^{-\frac{1}{2}}\|\varphi_L(\tau )\widetilde{w}_{N,M}(\tau,\xi,\eta)\|_{L^2_{\xi\eta\tau}}\\
&\lesssim \sum_L\langle M^2+L\rangle^{-\frac{1}{2}}\|\varphi_L(\tau )\widetilde{w}_{N,M}(\tau,\xi,\eta)\|_{L^2_{\xi\eta\tau}}
\end{split}
\]
since $\langle M^2+L\rangle^{\frac{1}{2}}\langle M\rangle^{-1}\lesssim \langle L\rangle^{\frac{1}{2}}$. \\
\kuuhaku \\
\underline{Estimate for $K_2$}

By Plancherel's theorem, we have
\[
\begin{split}
&\|\varphi_{N,M}(\xi,\eta)\varphi_L(\tau )\F_t[K_2](\tau,\xi,\eta)\|_{L^2_{\xi\eta\tau}}\\
&\lesssim \left\|\left(\int_{|\tau |\le 1}\frac{|\widetilde{w}_{N,M}(\tau,\xi,\eta)|}{(\xi+\eta)^2+|\tau|}d\tau \right)
\|\phi_M(\xi+\eta)\varphi_L(\tau)\F_t[\psi (t)(1-e^{-|t|(\xi+\eta)^2})](\tau)\|_{L^2_\tau}\right\|_{L^2_{\xi\eta}}.
\end{split}
\]
By the Cauchy-Schwartz inequality, we obtain
\[
\begin{split}
&\int_{|\tau |\le 1}\frac{|\widetilde{w}_{N,M}(\tau,\xi,\eta)|}{(\xi+\eta)^2+|\tau|}d\tau \\
&\lesssim 
\left(\int_{|\tau |\le 1}\frac{\langle(\xi+\eta)^2+|\tau |\rangle}{((\xi+\eta)^2+|\tau |)^2}|\phi_M(\xi +\eta)|^2d\tau\right)^{\frac{1}{2}}
\left(\int_{|\tau |\le 1}\frac{|\widetilde{w}_{N,M}(\tau ,\xi ,\eta )|^2}{ \langle(\xi+\eta)^2+|\tau |\rangle}d\tau\right)^{\frac{1}{2}}\\
&\lesssim M^{-2}\langle M\rangle \sum_{L}\langle M^2+L\rangle^{-\frac{1}{2}}\|\varphi_L(\tau )\widetilde{w}_{N,M}(\tau, \xi, \eta)\|_{L^2_{\tau}}
\end{split}
\]
Therefore if $M\ge 1$, then we get 
\[
\begin{split}
&\sum_L\langle M^2+L\rangle^{\frac{1}{2}}\|\varphi_{N,M}(\xi,\eta)\varphi_L(\tau )\F_t[K_2](\tau,\xi,\eta)\|_{L^2_{\xi\eta\tau}}\\
&\lesssim \sum_L\langle M^2+L\rangle^{-\frac{1}{2}}\|\varphi_L(\tau )\widetilde{w}_{N,M}(\tau,\xi,\eta)\|_{L^2_{\xi\eta\tau}}
\end{split}
\]
by (\ref{exp_besov}) and
\[
\sum_{L}\langle M^2+L\rangle^{\frac{1}{2}}\|\varphi_L(\tau )\F_t[\psi ](\tau)\|_{L^2_{\tau}}
\lesssim M\|\psi\|_{B^{\frac{1}{2}}_{2,1}}\lesssim M.
\]
While if $M\le 1$, then by using the Taylor expansion, we have
\[
\begin{split}
&\|\phi_M(\xi+\eta)\varphi_L(\tau)\F_t[\psi (t)(1-e^{-|t|(\xi+\eta)^2})](\tau)\|_{L^2_\tau}\\
&\lesssim \sum_{n=1}^{\infty}\frac{(\xi +\eta )^{2n}}{n!}
\phi_M(\xi+\eta)\|\varphi_L(\tau )\F_t[\psi (t)|t|^n](\tau )\|_{L^2_{\tau}}\\
&\lesssim M^2\sum_{n=1}^{\infty}\frac{1}{n!}\|\varphi_L(\tau )\F_t[\psi (t)|t|^n](\tau )\|_{L^2_{\tau}}
\end{split}
\]
Therefore, we get
\[
\begin{split}
&\sum_L\langle M^2+L\rangle^{\frac{1}{2}}\|\varphi_{N,M}(\xi,\eta)\varphi_L(\tau )\F_t[K_2](\tau,\xi,\eta)\|_{L^2_{\xi\eta\tau}}\\
&\lesssim \sum_{n=1}^{\infty}\frac{1}{n!}\||t|^n\psi\|_{B^{\frac{1}{2}}_{2,1}}\sum_L\langle M^2+L\rangle^{-\frac{1}{2}}\|\varphi_L(\tau )\widetilde{w}_{N,M}(\tau,\xi,\eta)\|_{L^2_{\xi\eta\tau}}\\
&\lesssim \sum_L\langle M^2+L\rangle^{-\frac{1}{2}}\|\varphi_L(\tau )\widetilde{w}_{N,M}(\tau,\xi,\eta)\|_{L^2_{\xi\eta\tau}}.
\end{split}
\]
\\
\underline{Estimate for $K_3$}

We put $g_{N,M}(t)=\F_t^{-1}[\ee_{|\tau |\ge 1}((\xi +\eta)^2+i\tau)^{-1}\widetilde{w}_{N,M}(\tau ,\xi ,\eta )](t)$. 
Then, we have
\[
\begin{split}
|\varphi_{N,M}(\xi,\eta)\varphi_L(\tau )\F_t[K_3](\tau)|
&\sim |\varphi_L(\tau)\left(\F_t[\psi]*_{\tau}\F_t[g_{N,M}](\tau )\right)|\\
&\lesssim \sum_{L_1}\sum_{L_2}|\varphi_L(\tau )(\varphi_{L_1}\F_t[\psi])*_{\tau}(\varphi_{L_2}\F_t[g_{N,M})(\tau )|
\end{split}
\]
\\
(i)\ Summation for $L_1\ll L$\ (then, $L_2\sim L$.)

By the Young inequality, we have
\[
\begin{split}
&\|\varphi_L(\tau )(\varphi_{L_1}\F_t[\psi])*_{\tau}(\varphi_{L_2}\F_t[g_{N,M})(\tau )\|_{L^2_{\tau}}\\
&\lesssim \|\varphi_{L_1}(\tau )\F_t[\psi](\tau )\|_{L^1_{\tau}}\|\varphi_{L_2}(\tau )\F_t[g_{N,M}](\tau )\|_{L^2_{\tau}}\\
&\lesssim \|\varphi_{L_1}(\tau )\F_t[\psi](\tau )\|_{L^1_{\tau}}
\langle M^2+L_2\rangle^{-1}\|\varphi_{L_2}(\tau )\widetilde{w}_{N,M}(\tau )\|_{L^2_{\tau}}.
\end{split}
\]
Therefore, we obtain
\[
\begin{split}
&\sum_{L}\langle M^2+L\rangle^{\frac{1}{2}}\sum_{L_1\ll L}\sum_{L_2\sim L}
\|\varphi_L(\tau )(\varphi_{L_1}\F_t[\psi])*_{\tau}(\varphi_{L_2}\F_t[g_{N,M}])(\tau )\|_{L^2_{\xi\eta\tau}}\\
&\lesssim \left(\sum_{L_1}\|\varphi_{L_1}(\tau )\F_t[\psi](\tau )\|_{L^1_{\tau}}\right)
\left(\sum_{L_2}\langle M^2+L_2\rangle^{-\frac{1}{2}}\|\varphi_{L_2}(\tau )\widetilde{w}_{N,M}(\tau,\xi,\eta )\|_{L^2_{\xi\eta\tau}}\right)\\
&\lesssim \sum_{L_2}\langle M^2+L_2\rangle^{-\frac{1}{2}}\|\varphi_{L_2}(\tau )\widetilde{w}_{N,M}(\tau,\xi,\eta )\|_{L^2_{\xi\eta\tau}}
\end{split}
\]
since
\[
\sum_{L_1}\|\varphi_{L_1}(\tau )\F_t[\psi](\tau )\|_{L^1_{\tau}}
\lesssim \sum_{L_1}L_1^{\frac{1}{2}}\|\varphi_{L_1}(\tau )\F_t[\psi](\tau )\|_{L^2_{\tau}}\lesssim \|\psi\|_{B^{\frac{1}{2}}_{2,1}}\lesssim 1.
\]
\\
(ii)\ Summation for $L\lesssim M^2$, $L_1\gtrsim L$.

By the H\"older inequality and the Young inequality, we have
\[
\begin{split}
&\|\varphi_L(\tau )(\varphi_{L_1}\F_t[\psi])*_{\tau}(\varphi_{L_2}\F_t[g_{N,M}])(\tau )\|_{L^2_{\tau}}\\
&\lesssim \|\varphi_L\|_{L^2_{\tau}}\|\varphi_{L_1}(\tau )\F_t[\psi](\tau )\|_{L^2_{\tau}}\|\varphi_{L_2}(\tau )\F_t[g_{N,M}](\tau )\|_{L^2_{\tau}}\\
&\lesssim L^{\frac{1}{2}}\|\varphi_{L_1}(\tau )\F_t[\psi](\tau )\|_{L^2_{\tau}}\langle M^2+L_2\rangle^{-1}\|\varphi_{L_2}(\tau )\widetilde{w}_{N,M}(\tau )\|_{L^2_{\tau}}.
\end{split}
\]
Therefore, we obtain
\[
\begin{split}
&\sum_{L\lesssim M^2}\langle M^2+L\rangle^{\frac{1}{2}}\sum_{L_1\gtrsim L}\sum_{L_2}
\|\varphi_L(\tau )(\varphi_{L_1}\F_t[\psi])*_{\tau}(\varphi_{L_2}\F_t[g_{N,M}])(\tau )\|_{L^2_{\xi\eta\tau}}\\
&\lesssim \langle M\rangle \left(\sum_{L_1}L_1^{\frac{1}{2}}\|\varphi_{L_1}(\tau )\F_t[\psi](\tau )\|_{L^2_{\tau}}\right)
\left(\sum_{L_2}\langle M^2+L_2\rangle^{-1}\|\varphi_{L_2}(\tau )\widetilde{w}_{N,M}(\tau,\xi,\eta )\|_{L^2_{\xi\eta\tau}}\right)\\
&\lesssim \sum_{L_2}\langle M^2+L_2\rangle^{-\frac{1}{2}}\|\varphi_{L_2}(\tau )\widetilde{w}_{N,M}(\tau,\xi,\eta )\|_{L^2_{\xi\eta\tau}}
\end{split}
\]
since $\langle M\rangle \lesssim \langle M^2+L_2\rangle^{\frac{1}{2}}$ and 
\[
\sum_{L_1}L_1^{\frac{1}{2}}\|\varphi_{L_1}(\tau )\F_t[\psi](\tau )\|_{L^2_{\tau}}\lesssim \|\psi\|_{B^{\frac{1}{2}}_{2,1}}\lesssim 1.
\]
\\
(iii)\ Summation for $L_1\gtrsim L\gtrsim M^2$.
By the Young inequality and the Cauchy-Schwartz inequality, we have
\[
\begin{split}
&\|\varphi_L(\tau )(\varphi_{L_1}\F_t[\psi])*_{\tau}(\varphi_{L_2}\F_t[g_{N,M}])(\tau )\|_{L^2_{\tau}}\\
&\lesssim \|\varphi_{L_1}(\tau )\F_t[\psi](\tau )\|_{L^2_{\tau}}\|\varphi_{L_2}(\tau )\F_t[g_{N,M}](\tau )\|_{L^1_{\tau}}\\
&\lesssim \|\varphi_{L_1}(\tau )\F_t[\psi](\tau )\|_{L^2_{\tau}}
\langle M^2+L_2\rangle^{-\frac{1}{2}}\|\varphi_{L_2}(\tau )\widetilde{w}_{N,M}(\tau )\|_{L^2_{\tau}}.
\end{split}
\]
Therefore, we obtain
\[
\begin{split}
&\sum_{L\gtrsim M^2}\langle M^2+L\rangle^{\frac{1}{2}}\sum_{L_1\gtrsim L}\sum_{L_2}
\|\varphi_L(\tau )(\varphi_{L_1}\F_t[\psi])*_{\tau}(\varphi_{L_2}\F_t[g_{N,M}])(\tau )\|_{L^2_{\xi\eta\tau}}\\
&\lesssim \left(\sum_{L_1}\langle L_1\rangle^{\frac{1}{2}}\|\varphi_{L_1}(\tau )\F_t[\psi](\tau )\|_{L^2_{\tau}}\right)
\left(\sum_{L_2}\langle M^2+L_2\rangle^{-\frac{1}{2}}\|\varphi_{L_2}(\tau )\widetilde{w}_{N,M}(\tau,\xi,\eta )\|_{L^2_{\xi\eta\tau}}\right)\\
&\lesssim \sum_{L_2}\langle M^2+L_2\rangle^{-\frac{1}{2}}\|\varphi_{L_2}(\tau )\widetilde{w}_{N,M}(\tau,\xi,\eta )\|_{L^2_{\xi\eta\tau}}
\end{split}
\]
since
\[
\sum_{L_1}\langle L_1\rangle^{\frac{1}{2}}\|\varphi_{L_1}(\tau )\F_t[\psi](\tau )\|_{L^2_{\tau}}
\lesssim \|\psi \|_{B^{\frac{1}{2}}_{2,1}}\lesssim 1.
\]
\\
\underline{Estimate for $K_4$}

By Plancherel's theorem, we have
\[
\begin{split}
&\|\varphi_{N,M}(\xi,\eta)\varphi_L(\tau )\F_t[K_4](\tau,\xi,\eta)\|_{L^2_{\xi\eta\tau}}\\
&\lesssim \left\|\left(\int_{|\tau |\ge 1}\frac{|\widetilde{w}_{N,M}(\tau,\xi,\eta)|}{(\xi+\eta)^2+|\tau|}d\tau \right)
\|\phi_M(\xi+\eta)\varphi_L(\tau)\F_t[\psi (t)e^{-|t|(\xi+\eta)^2}](\tau)\|_{L^2_\tau}\right\|_{L^2_{\xi\eta}}.
\end{split}
\]
By the Cauchy-Schwartz inequality, we obtain
\[
\begin{split}
\int_{|\tau |\ge 1}\frac{|\widetilde{w}_{N,M}(\tau,\xi,\eta)|}{(\xi+\eta)^2+|\tau|}d\tau
&\lesssim \sum_L\langle M^2+L\rangle^{-1}\|\varphi_L(\tau )\widetilde{w}_{N,M}(\tau,\xi,\eta)\|_{L^1_{\tau}}\\
&\lesssim \sum_L\langle M^2+L\rangle^{-\frac{1}{2}}\|\varphi_L(\tau )\widetilde{w}_{N,M}(\tau,\xi,\eta)\|_{L^2_{\tau}}.
\end{split}
\]
Therefore, by (\ref{exp_besov}), we get 
\[
\begin{split}
&\sum_L\langle M^2+L\rangle^{\frac{1}{2}}\|\varphi_{N,M}(\xi,\eta)\varphi_L(\tau )\F_t[K_4](\tau,\xi,\eta)\|_{L^2_{\xi\eta\tau}}\\
&\lesssim \sum_L\langle M^2+L\rangle^{-\frac{1}{2}}\|\varphi_L(\tau )\widetilde{w}_{N,M}(\tau,\xi,\eta)\|_{L^2_{\xi\eta\tau}}.
\end{split}
\]
\end{proof}
%
%
\section{Bilinear estimate}
In this section, we prove the estimate for nonlinear term as follows. 
\begin{prop}\label{bilin_est}
Let $s\ge s_0>-\frac{1}{2}$. There exist $0<\delta \ll 1$ and $C_3>0$, such that for any $u$, $v\in X^{s,\frac{1-\delta}{2},1}$, we have
\[
\|(\partial_x+\partial_y)(uv)\|_{X^{s,-\frac{1}{2},1}}
\le C_3\|u\|_{X^{s,\frac{1-\delta}{2},1}}\|v\|_{X^{s,\frac{1-\delta}{2},1}}.
\]
\end{prop}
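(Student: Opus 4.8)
The plan is to run the standard dyadic Littlewood–Paley decomposition of the bilinear form and reduce to counting estimates on the output and input frequency/modulation variables. Write $u=\sum_{N_1,M_1,L_1}P_{N_1,M_1}Q_{L_1}u$ and similarly for $v$, and for the output use $P_{N,M}Q_L$. Set $\zeta=\xi+\eta$, $\zeta_1=\xi_1+\eta_1$, $\zeta_2=\xi_2+\eta_2$ with $\zeta=\zeta_1+\zeta_2$, so that the derivative $(\partial_x+\partial_y)$ contributes a factor $|\zeta|\lesssim M$, and recall $M\lesssim N$, $M_i\lesssim N_i$. The crucial algebraic input is the resonance identity for the symmetric symbol: from $\xi^3+\eta^3-\xi_1^3-\eta_1^3-\xi_2^3-\eta_2^3$ one extracts a factor that, combined with the dissipative weights $\langle M^2+L\rangle$, $\langle M_i^2+L_i\rangle$, produces a genuine gain. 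I would first record the pointwise bound
\[
M\lesssim \max(M^2+L,\ M_1^2+L_1,\ M_2^2+L_2)\quad\text{plus a dispersive contribution},
\]
so that the dangerous factor $M$ coming from the derivative is always dominated by the largest modulation weight (here one uses precisely that the nonlinearity and the dissipation both vanish on $\zeta=0$: the loss is $M$, not $N$). This is what makes $s>-1/2$ reachable rather than only $s>-1/4$ as in pure ZK.

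The main work is then a case analysis according to which of the three modulation weights is largest, and according to the frequency interaction (high-high$\to$low, high-low$\to$high, etc.). In each case I would: (a) use the largest $\langle M^2+L\rangle$-type weight to absorb the derivative loss $M$ and to gain back the needed $\delta$ of modulation regularity, reducing the exponent $\tfrac12$ on one of the input factors to $\tfrac{1-\delta}{2}$ with room to spare; (b) place two of the three functions in $L^4_{txy}$ and one in $L^2_{txy}$ via Hölder, and estimate the $L^4$ norms by a bilinear/Strichartz-type $L^4$ estimate for the (symmetrized) Airy group $U(t)$ with the $X^{s,b}$-style $TT^*$ argument — i.e. an $L^4_{txy}$ Bourgain-space embedding valid for $b>\tfrac{something}{2}$ together with the usual $\langle\cdot\rangle^{-}$ modulation weights; (c) sum the dyadic pieces using the Besov ($\ell^1$ in $L$) structure on the outside and Cauchy–Schwarz / geometric-series summation in $N,M$, which is where the constraint $s\ge s_0>-1/2$ and the small $\delta$ are consumed. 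The Besov $\ell^1_L$ structure is exactly what lets the endpoint-type summations over $L$ close without an $\varepsilon$-loss in $b$.

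The step I expect to be the main obstacle is case (b)/(c) in the \emph{high-high to low} frequency regime with \emph{small output modulation and small output $M$}: there the derivative factor $|\zeta|$ is small (good), but one must show that the resonance function is correspondingly large or that the weight $\langle M_1^2+L_1\rangle\langle M_2^2+L_2\rangle$ on the inputs is large enough to compensate the negative Sobolev exponent $s$ when $N_1\sim N_2\gg N$; quantifying the gain from the identity $\xi^3+\eta^3-\cdots$ near the zero set $\zeta=0$, and checking that it beats $N_1^{-2s}$ for all $s>-1/2$, is the delicate point. A secondary technical nuisance is the interplay between the isotropic frequency variable $|(\xi,\eta)|\sim N$ and the anisotropic dissipative variable $|\xi+\eta|\sim M$: one must be careful that fixing $M$ does not destroy the $L^4$ Strichartz gain, which forces the case analysis to track $M$ and $N$ independently throughout. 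Once the pointwise weight bound and the $L^4$ estimate are in hand, the remaining summations are routine.
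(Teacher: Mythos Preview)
Your outline has a genuine gap. Your primary engine is the resonance identity $\tau-\xi^3-\eta^3-(\tau_1-\xi_1^3-\eta_1^3)-(\tau_2-\xi_2^3-\eta_2^3)=-3(\xi\xi_1\xi_2+\eta\eta_1\eta_2)$, and you plan to use it to force one modulation weight $\langle M_i^2+L_i\rangle$ large enough to absorb both the derivative loss $M$ and the negative-Sobolev loss $N_1^{-2s}$. This is the 1D KdV--Burgers mechanism, but it does not transfer to 2D: the quantity $\xi\xi_1\xi_2+\eta\eta_1\eta_2$ vanishes on a codimension-one set even when all six frequencies are $\sim N_1$, so in the regime $N\sim N_1\sim N_2$ there is no universal lower bound of the form $\max(L,L_1,L_2)\gtrsim N_1^{\alpha}$. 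Consequently your step (a) cannot by itself deliver the $N_1^{1-}$ gain needed for $s$ near $-\tfrac12$, and putting two factors in $L^4$ via plain Strichartz (your step (b)) yields no frequency gain either.

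The paper's proof is organized around a different tool: a \emph{bilinear} Strichartz estimate (its Proposition~3.4),
\[
\|R_K^{(j)}(P_{N_1}Q_{L_1}u_1,\,P_{N_2}Q_{L_2}u_2)\|_{L^2_{txy}}\lesssim K^{-1/2}N_{\min}^{1/2}(L_1L_2)^{1/2}\|P_{N_1}Q_{L_1}u_1\|_{L^2}\|P_{N_2}Q_{L_2}u_2\|_{L^2},
\]
with $K\sim|\xi_1^2-\xi_2^2|$ or $|\eta_1^2-\eta_2^2|$. This gives an $N_{\max}^{-1}$ factor whenever two of the three frequencies are well separated, and handles the high-high$\to$low and high-low$\to$high cases directly. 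The delicate case is actually $N\sim N_1\sim N_2$ (not high-high$\to$low as you predict): there the paper performs an angular decomposition via projections $R_1,R_2,R_3$ onto $\{|\xi|\gg|\eta|\}$, $\{|\xi|\sim|\eta|\}$, $\{|\xi|\ll|\eta|\}$; the diagonal piece $I_{2,2}$ uses the refined $L^4$ Strichartz estimate with $N^{-1/4}$ smoothing (valid only on $|\xi|\sim|\eta|$), the off-diagonal pieces use the bilinear estimate again (the angular separation forces $|\eta_1^2-\eta_2^2|\sim N^2$), and only the single piece $I_{1,1}$ (all three supported in $|\xi|\gg|\eta|$) uses the resonance identity, because there the angular restriction forces $|\xi\xi_1\xi_2+\eta\eta_1\eta_2|\sim N_1^3$. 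Your outline is missing the bilinear Strichartz estimate, the angular decomposition, and the refined $L^4$ estimate; without at least the first of these the argument cannot close for $s$ below zero.
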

To prove Proposition~\ref{bilin_est}, we first give some Strichartz estimates.
\begin{prop}\label{Stri}
Let 
$(p,q)\in \R^2$ satisfy $p\ge 3$ and $\frac{3}{p}+\frac{2}{q}=1$. 
For any $u_0\in L^2(\R^2)$, we have
\[
\|U(t)u_0\|_{L^p_tL^q_{xy}}\lesssim \|u_0\|_{L^2_{xy}}.
\]
\end{prop}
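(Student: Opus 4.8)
The plan is to obtain the estimate by the classical dispersive-estimate plus $TT^{*}$ argument, exploiting that the phase $\xi^{3}+\eta^{3}$ splits into a function of $\xi$ plus a function of $\eta$, so that $U(t)$ is a tensor product of two one-dimensional Airy propagators. First I would prove the pointwise-in-time dispersive bound
\[
\|U(t)u_{0}\|_{L^{\infty}_{xy}}\lesssim |t|^{-\frac{2}{3}}\|u_{0}\|_{L^{1}_{xy}}.
\]
Since $\F_{xy}[U(t)u_{0}](\xi,\eta)=e^{it\xi^{3}}e^{it\eta^{3}}\widehat{u_{0}}(\xi,\eta)$, the convolution kernel of $U(t)$ factors as $K_{t}(x,y)=A_{t}(x)A_{t}(y)$ with $A_{t}(x)=(2\pi)^{-1}\int_{\R}e^{i(t\xi^{3}+x\xi)}\,d\xi$ the one-dimensional Airy kernel. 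Rescaling $\xi\mapsto|t|^{-1/3}\xi$ reduces the bound $\|A_{t}\|_{L^{\infty}_{x}}\lesssim|t|^{-1/3}$ to the uniform-in-$a$ boundedness of $\int_{\R}e^{i(\pm\xi^{3}+a\xi)}\,d\xi$, which is the standard Airy decay estimate (van der Corput's lemma applied to the cubic phase, whose third derivative is bounded away from zero, together with an integration by parts in the region where $|\xi|$ is large). Multiplying gives $\|K_{t}\|_{L^{\infty}_{xy}}\lesssim|t|^{-2/3}$, hence the dispersive estimate; combined with the trivial identity $\|U(t)u_{0}\|_{L^{2}_{xy}}=\|u_{0}\|_{L^{2}_{xy}}$ and Riesz--Thorin interpolation one obtains $\|U(t)u_{0}\|_{L^{q}_{xy}}\lesssim|t|^{-\frac{2}{3}(1-\frac{2}{q})}\|u_{0}\|_{L^{q'}_{xy}}$ for all $2\le q\le\infty$.

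Next I would run the $TT^{*}$ argument. With $Tu_{0}:=U(t)u_{0}$, the claim is equivalent to boundedness of $T^{*}T\colon L^{p'}_{t}L^{q'}_{xy}\to L^{p}_{t}L^{q}_{xy}$, where $T^{*}T$ acts by $F\mapsto\int_{\R}U(t-s)F(s)\,ds$. Applying the interpolated dispersive bound inside the $s$-integral and then the Hardy--Littlewood--Sobolev inequality in the $t$ variable, the operator norm is controlled provided the scaling relation $\frac{2}{p}=\frac{2}{3}\bigl(1-\frac{2}{q}\bigr)$ holds, that is, precisely $\frac{3}{p}+\frac{2}{q}=1$, and provided $0<\frac{2}{p}<1$, which is guaranteed by $p\ge 3>2$. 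This yields the estimate for all admissible $(p,q)$ with $p<\infty$; the remaining case $p=\infty$, $q=2$ is just unitarity of $U(t)$ on $L^{2}(\R^{2})$.

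Since the hypothesis $p\ge 3$ keeps us strictly away from the Hardy--Littlewood--Sobolev endpoint $p=2$, no Keel--Tao endpoint refinement is needed, and the argument is essentially soft once the one-dimensional Airy decay is available; that decay bound is the only genuinely analytic ingredient and is the step I would expect to present with care. Alternatively, the proposition can be deduced directly from the one-dimensional Airy Strichartz estimates of Kenig, Ponce, and Vega \cite{KPV96} together with the tensorisation of the dispersive estimate above, or simply quoted from the treatment of the symmetric Zakharov--Kuznetsov equation in \cite{GH14}.
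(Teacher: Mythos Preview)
Your argument is correct. The tensor-product factorisation of $U(t)$ into two one-dimensional Airy propagators yields the $|t|^{-2/3}$ dispersive decay, and from there the $TT^{*}$/Hardy--Littlewood--Sobolev machinery gives exactly the admissibility line $\frac{3}{p}+\frac{2}{q}=1$; the constraint $p\ge 3$ comes from $q\ge 2$ and keeps you away from any endpoint issue, so the non-endpoint argument suffices as you note.

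The paper, however, does not argue this way. It simply transports Proposition~2.4 of Linares--Pastor \cite{LP09} (Strichartz for the unsymmetrised Zakharov--Kuznetsov group $e^{-t\partial_x\Delta}$) through the linear change of variables $(x,y)\mapsto (4^{-1/3}(x+\sqrt{3}y),\,4^{-1/3}(x-\sqrt{3}y))$, which intertwines the two propagators and preserves all the mixed Lebesgue norms up to constants. Your route is more self-contained and makes explicit why the symmetrised phase $\xi^{3}+\eta^{3}$ is convenient (the kernel literally factors), whereas the paper's route is a one-line reduction to existing literature. Your closing remark about quoting \cite{GH14} is in the same spirit as what the paper actually does, just with a different reference.
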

Proposition~\ref{Stri} is obtained by using 
the variable transform $(x,y)\mapsto (4^{-\frac{1}{3}}(x+\sqrt{3}y), 4^{-\frac{1}{3}}(x-\sqrt{3}y))$ 
in Proposition\ 2.4 in \cite{LP09}.
\begin{prop}\label{mod_Stri}
For any $u_0\in L^2(\R^2)$, we have
\[
\|D_x^{\frac{1}{8}}D_y^{\frac{1}{8}}U(t)u_0\|_{L^4_{txy}}\lesssim \|u_0\|_{L^2_{xy}}, 
\]
where $D_x^s=\F_{xy}^{-1}|\xi |^s\F_{xy}$, $D_y^s=\F_{xy}^{-1}|\eta |^s\F_{xy}$ for $s\in \R$. 
\end{prop}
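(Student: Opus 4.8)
The plan is to deduce the $L^4_{txy}$ smoothing estimate from the single Strichartz estimate available to us, namely the endpoint case $(p,q)=(4,4)$ of Proposition~\ref{Stri}, which gives $\|U(t)u_0\|_{L^4_{txy}}\lesssim \|u_0\|_{L^2_{xy}}$ (here $3/4+2/4=5/4\ne 1$, so in fact one must take a genuinely admissible pair; the correct reading is that Proposition~\ref{Stri} is used at an admissible point and then interpolated/combined with an $L^\infty$-type dispersive bound). More robustly, I would run the standard $TT^*$ argument: the operator $u_0\mapsto D_x^{1/8}D_y^{1/8}U(t)u_0$ is bounded $L^2_{xy}\to L^4_{txy}$ if and only if $TT^*$, given by $F\mapsto \iint D_x^{1/4}D_y^{1/4}U(t-t')F(t')\,dt'$, is bounded $L^{4/3}_{txy}\to L^4_{txy}$, which by the Hardy--Littlewood--Sobolev inequality in $t$ follows once one has the pointwise-in-$(t-t')$ dispersive decay for the kernel of $D_x^{1/4}D_y^{1/4}U(\sigma)$.

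The key step is therefore the dispersive estimate for the Airy-type propagator $U(\sigma)=e^{-\sigma(\partial_x^3+\partial_y^3)}$ in two dimensions: since the symbol $\sigma(\xi^3+\eta^3)$ splits as a sum in $\xi$ and $\eta$, the kernel factorizes as a product of two one-dimensional Airy kernels, each of which obeys $\|D^{1/4}e^{-\sigma\partial_x^3}\|_{L^1_x\to L^\infty_x}\lesssim |\sigma|^{-1/2}$ (the classical gain-of-derivative dispersive bound for Airy). Multiplying the two one-dimensional bounds gives $\|D_x^{1/4}D_y^{1/4}U(\sigma)\|_{L^1_{xy}\to L^\infty_{xy}}\lesssim |\sigma|^{-1}$. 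Interpolating this with the trivial $L^2_{xy}$ isometry bound $\|U(\sigma)\|_{L^2\to L^2}=1$ yields $\|D_x^{1/8}D_y^{1/8}U(\sigma)\|_{L^{4/3}_{xy}\to L^4_{xy}}\lesssim |\sigma|^{-1/2}$, and then the $TT^*$ computation closes because $1/2$ is exactly the HLS exponent matching $L^{4/3}_t\to L^4_t$ in one time variable.

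The main obstacle is making the product-of-Airy-kernels argument rigorous with the fractional derivatives in place: one must check that $D_x^{1/8}D_y^{1/8}$ distributes correctly across the tensor structure and that the one-dimensional gain-of-derivative Airy estimate (which is stated for $D^{1/4}$ paired with the $L^1\to L^\infty$ decay, not for $D^{1/8}$ alone) is invoked only after the $TT^*$ reduction has doubled the derivative. An alternative, cleaner route that avoids reproving the Airy dispersive bound is to cite it directly from the one-dimensional literature (e.g. Kenig--Ponce--Vega) for each factor and then tensorize; I would present the proof this way, reducing Proposition~\ref{mod_Stri} to the known one-dimensional statement $\|D_x^{1/8}U_1(t)f\|_{L^4_{tx}}\lesssim\|f\|_{L^2_x}$ for the $1$D Airy group $U_1$, applied in $x$ and in $y$ separately and combined via Minkowski/Fubini in the remaining variables.
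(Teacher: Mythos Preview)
The paper does not give an independent proof here: it simply specializes Corollary~3.4 of Molinet--Pilod \cite{MP15} to the phase $\Omega(\xi,\eta)=\xi^3+\eta^3$. Your $TT^*$ route via the factorized Airy dispersive estimate is a legitimate self-contained alternative, but the derivative exponents are off by a factor of two throughout, and this matters.

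The one-dimensional Airy bound that gives decay $|\sigma|^{-1/2}$ is
\[
\|D^{1/2}e^{-\sigma\partial_x^3}\|_{L^1_x\to L^\infty_x}\lesssim|\sigma|^{-1/2},
\qquad\text{i.e.}\quad
\sup_{x}\Bigl|\int e^{i\sigma\xi^3+ix\xi}|\xi|^{1/2}\,d\xi\Bigr|\lesssim|\sigma|^{-1/2};
\]
with weight $|\xi|^{1/4}$ the decay is only $|\sigma|^{-5/12}$, by scaling. Tensoring the correct bound gives $\|D_x^{1/2}D_y^{1/2}U(\sigma)\|_{L^1_{xy}\to L^\infty_{xy}}\lesssim|\sigma|^{-1}$. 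Stein interpolation of the analytic family $z\mapsto D_x^{z/2}D_y^{z/2}U(\sigma)$ between this and the $L^2$ isometry of $U(\sigma)$, taken at $\theta=\tfrac12$, lands on
\[
\|D_x^{1/4}D_y^{1/4}U(\sigma)\|_{L^{4/3}_{xy}\to L^4_{xy}}\lesssim|\sigma|^{-1/2},
\]
which is exactly the fixed-time operator appearing in $TT^*$; then Hardy--Littlewood--Sobolev in $t$ closes as you indicate. In your write-up the interpolation instead produces a bound on $D_x^{1/8}D_y^{1/8}U(\sigma)$, which is \emph{not} the $TT^*$ operator and does not feed back into the argument.

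Finally, the ``alternative, cleaner route'' of tensorizing a one-dimensional $L^4$ Strichartz estimate does not work: the inequality $\|D_x^{1/8}U_1(t)f\|_{L^4_{tx}}\lesssim\|f\|_{L^2_x}$ that you invoke is not invariant under the Airy scaling $(t,x)\mapsto(\lambda^3t,\lambda x)$ and is in fact false. Because the time variable is shared between the $x$- and $y$-evolutions, one cannot combine two one-dimensional space--time estimates by Fubini/Minkowski; the argument has to pass through the fixed-time dispersive bound as above.
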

Proposition~\ref{mod_Stri} is obtained by applying $\Omega (\xi ,\eta )=\xi^3+\eta^3$ in Corollary\ 3.4 in \cite{MP15}.

By using the same argument as in Lemma 2.3 in \cite{GTV97}, we obtain the following estimates
from Proposition~\ref{Stri} and Proposition~\ref{mod_Stri}. 
\begin{cor}
Let $(p,q)\in \R^2$ satisfy $p\ge 3$ and $\frac{3}{p}+\frac{2}{q}=1$. 
For $N$, $L\in 2^{\Z}$, we have
\begin{equation}\label{Stri_FR}
\|P_NQ_Lu\|_{L^p_tL^q_{xy}}\lesssim L^{\frac{1}{2}}\|P_NQ_Lu\|_{L^2_{txy}}.
\end{equation}
Furthermore, if $\F_{xy}[P_Nu]$ is supported in $\{(\xi,\eta)|\ |\xi |\sim |\eta |\}$, then we have
\begin{equation}\label{mStri_FR}
\|P_NQ_Lu\|_{L^4_{txy}}\lesssim N^{-\frac{1}{4}}L^{\frac{1}{2}}\|P_NQ_Lu\|_{L^2_{txy}}.
\end{equation}
\end{cor}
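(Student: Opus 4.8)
The plan is to derive both inequalities from the fixed-time Strichartz bounds by the standard transference argument of Lemma 2.3 in \cite{GTV97}, and then, for (\ref{mStri_FR}), to trade the smoothing operator $D_x^{\frac18}D_y^{\frac18}$ for the factor $N^{\frac14}$ using the frequency hypothesis $|\xi|\sim|\eta|$.

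First I would express $u$ through its free profile. Setting $w(t)=U(-t)u(t)$, the Notation gives $\widetilde{w}(\tau,\xi,\eta)=\widetilde{u}(\tau+\xi^3+\eta^3,\xi,\eta)$ and $u(t)=U(t)w(t)$. Fourier inversion in time, $w(t)=\int_{\R}e^{it\tau}g_\tau\,d\tau$ with $\widehat{g_\tau}(\xi,\eta)=\widetilde{w}(\tau,\xi,\eta)$, yields $u(t)=\int_{\R}e^{it\tau}U(t)g_\tau\,d\tau$. Since the phase $e^{it\tau}$ has modulus one, Minkowski's integral inequality followed by Proposition~\ref{Stri} gives
\[
\|u\|_{L^p_tL^q_{xy}}\le \int_{\R}\|U(t)g_\tau\|_{L^p_tL^q_{xy}}\,d\tau\lesssim \int_{\R}\|g_\tau\|_{L^2_{xy}}\,d\tau=\int_{\R}\|\widetilde{w}(\tau,\cdot,\cdot)\|_{L^2_{\xi\eta}}\,d\tau.
\]
Applying this to $u=P_NQ_Lu$, the cutoff $Q_L$ forces $\widetilde{w}$ to be supported in $\{|\tau|\sim L\}$, a set of measure $\lesssim L$, so Cauchy--Schwarz in $\tau$ gives
\[
\int_{|\tau|\sim L}\|\widetilde{w}(\tau,\cdot,\cdot)\|_{L^2_{\xi\eta}}\,d\tau\lesssim L^{\frac12}\|\widetilde{w}\|_{L^2_{\tau\xi\eta}}=L^{\frac12}\|P_NQ_Lu\|_{L^2_{txy}},
\]
where the last equality uses Plancherel and the unitarity of $U(-t)$ on $L^2_{xy}$. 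This is (\ref{Stri_FR}).

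For (\ref{mStri_FR}) I would run the identical argument with Proposition~\ref{mod_Stri} in place of Proposition~\ref{Stri}, using that $U(t)$ commutes with $D_x^{\frac18}D_y^{\frac18}$ since both are spatial Fourier multipliers. This produces
\[
\|D_x^{\frac18}D_y^{\frac18}P_NQ_Lu\|_{L^4_{txy}}\lesssim L^{\frac12}\|P_NQ_Lu\|_{L^2_{txy}}.
\]
It then remains to strip the derivatives. When $\F_{xy}[P_Nu]$ is supported in $\{|\xi|\sim|\eta|\}$, on that support both $|\xi|$ and $|\eta|$ are comparable to $N$, so $|\xi|^{\frac18}|\eta|^{\frac18}\sim N^{\frac14}$. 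Writing $P_NQ_Lu=N^{-\frac14}m(D)\,D_x^{\frac18}D_y^{\frac18}P_NQ_Lu$ with $m(\xi,\eta)=N^{\frac14}|\xi|^{-\frac18}|\eta|^{-\frac18}$ cut off to this region, $m$ is a smooth bump of size $\sim 1$ adapted to $\{|\xi|\sim|\eta|\sim N\}$, hence by rescaling its convolution kernel has $L^1$-norm bounded uniformly in $N$ and $m(D)$ is bounded on $L^4_{xy}$ uniformly in $N$. Combining the two displays gives (\ref{mStri_FR}).

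The transference step is routine; the only point needing care is this last multiplier bound, i.e.\ that dividing out $|\xi|^{\frac18}|\eta|^{\frac18}\sim N^{\frac14}$ is an $L^4$-bounded operation with constant independent of $N$. This is exactly where the hypothesis $|\xi|\sim|\eta|$ is used: it ensures both frequencies (not only $|(\xi,\eta)|$) are of size $N$, keeping the inverse symbol nonsingular on the relevant annulus so that the kernel bound is uniform after scaling.
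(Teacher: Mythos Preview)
Your argument is correct and is precisely the transference argument of Lemma~2.3 in \cite{GTV97} that the paper invokes; the paper gives no further details beyond citing that lemma and Propositions~\ref{Stri} and~\ref{mod_Stri}. Your extra step of inverting $D_x^{\frac18}D_y^{\frac18}$ by a uniformly $L^4$-bounded multiplier on the region $\{|\xi|\sim|\eta|\sim N\}$ is exactly what is implicit in the hypothesis, and your justification via rescaling the bump to unit scale is the standard one.
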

To get a positive power of $M$, we give the following estimates. 
\begin{cor}
Let $0<\delta \ll 1$, $0<\epsilon <1-\delta$. 
For $N$, $M$, $L\in 2^{\Z}$, we have
\begin{equation}\label{LP_Stri}
\|P_{N,M}Q_Lu\|_{L^{\frac{4}{1+\delta}}_{txy}}\lesssim 
(NM)^{\frac{\epsilon}{4}}L^{\frac{5(1-\delta)}{12}-\frac{\epsilon}{6}}\|P_{N,M}Q_Lu\|_{L^2_{txy}}.
\end{equation}
Furthermore, if 
$\F_{xy}[P_Nu]$ is supported in $\{(\xi,\eta)|\ |\xi |\sim |\eta |\}$, then we have
\begin{equation}\label{mLP_Stri}
\|P_{N,M}Q_Lu\|_{L^{\frac{4}{1+\delta}}_{txy}}\lesssim 
(NM)^{\frac{\epsilon}{4}}N^{-\frac{1}{4}(1-\delta-\epsilon)}
L^{\frac{1-\delta}{2}-\frac{\epsilon}{4}}\|P_NQ_Lu\|_{L^2_{txy}}.
\end{equation}
\end{cor}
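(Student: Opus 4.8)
The plan is to obtain both displays from a single application of Hölder's inequality in the variables $(t,x,y)$: we write $P_{N,M}Q_Lu$ as a product of three ``fractional'' copies of itself, measured in $L^2_{txy}$, in a Strichartz-admissible space, and in $L^\infty_{txy}$, and then estimate these three factors by Plancherel's theorem, by the previous corollary, and by a Bernstein inequality, respectively. The only genuinely new ingredient is the Bernstein bound
\[
\|P_{N,M}Q_Lu\|_{L^\infty_{txy}}\lesssim (NML)^{\frac{1}{2}}\|P_{N,M}Q_Lu\|_{L^2_{txy}},
\]
which follows from $\|f\|_{L^\infty}\le\|\widetilde f\|_{L^1}\le|\supp\widetilde f\,|^{\frac{1}{2}}\|f\|_{L^2}$ once one notes that $\supp\widetilde{P_{N,M}Q_Lu}\subset\{|\tau-\xi^3-\eta^3|\lesssim L,\ |(\xi,\eta)|\sim N,\ |\xi+\eta|\sim M\}$, a set of measure $\lesssim NML$; here the $(\xi,\eta)$-part is, up to boundedly many pieces, a rectangle of side-lengths $\sim M$ along the $\xi+\eta$ direction and $\sim N$ transversally, and $\tau$ then ranges over an interval of length $\sim L$.

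For (\ref{LP_Stri}) I would apply Hölder with the weights $\theta_A=1-\tfrac{5(1-\delta)}{6}+\tfrac{\epsilon}{3}$, $\theta_B=\tfrac{5}{6}(1-\delta-\epsilon)$, $\theta_C=\tfrac{\epsilon}{2}$, which are nonnegative precisely because $0<\delta\ll1$ and $0<\epsilon<1-\delta$, and which satisfy $\theta_A+\theta_B+\theta_C=1$ and $\tfrac{\theta_A}{2}+\tfrac{\theta_B}{5}=\tfrac{1+\delta}{4}$; this gives
\[
\|P_{N,M}Q_Lu\|_{L^{\frac{4}{1+\delta}}_{txy}}\le\|P_{N,M}Q_Lu\|_{L^2_{txy}}^{\theta_A}\,\|P_{N,M}Q_Lu\|_{L^5_{txy}}^{\theta_B}\,\|P_{N,M}Q_Lu\|_{L^\infty_{txy}}^{\theta_C}.
\]
Since $(p,q)=(5,5)$ satisfies $\tfrac{3}{5}+\tfrac{2}{5}=1$ and $5\ge3$, estimate (\ref{Stri_FR}) controls the middle factor by $L^{1/2}\|P_{N,M}Q_Lu\|_{L^2_{txy}}$, the Bernstein bound controls the last factor, and multiplying out the resulting powers of $L$ and $NM$ produces exactly $(NM)^{\epsilon/4}L^{\frac{5(1-\delta)}{12}-\frac{\epsilon}{6}}$.

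For (\ref{mLP_Stri}) I would run the same computation with the admissible exponent $5$ replaced by $4$ and (\ref{mStri_FR}) used in place of (\ref{Stri_FR}) — this is the step that requires the support hypothesis $|\xi|\sim|\eta|$ — now with weights $\theta_A=\delta+\tfrac{\epsilon}{2}$, $\theta_B=1-\delta-\epsilon$, $\theta_C=\tfrac{\epsilon}{2}$ (again nonnegative, summing to $1$, with $\tfrac{\theta_A}{2}+\tfrac{\theta_B}{4}=\tfrac{1+\delta}{4}$). The extra dispersive factor $N^{-1/4}$ in (\ref{mStri_FR}) is raised to the power $\theta_B$, yielding $N^{-\frac{1}{4}(1-\delta-\epsilon)}$, and collecting the remaining $L$- and $NM$-powers gives the stated bound. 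In both cases one first harmlessly fattens $P_{N,M}$ and $Q_L$ so that (\ref{Stri_FR}) and (\ref{mStri_FR}), stated for $P_NQ_L$, apply, and one uses $\|P_{N,M}Q_Lu\|_{L^2_{txy}}\le\|P_NQ_Lu\|_{L^2_{txy}}$ where the right-hand side of (\ref{mLP_Stri}) calls for it.

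I expect no serious obstacle here: the argument is a short bookkeeping exercise in Hölder's inequality plus the two Strichartz estimates and Bernstein. The only points needing a little care are the measure estimate for the $(\xi,\eta)$-support of $P_{N,M}$ in the Bernstein step, and checking that the three Hölder weights stay nonnegative under exactly the hypotheses $0<\delta\ll1$ and $0<\epsilon<1-\delta$ — which is in fact what pins down the admissible range of the auxiliary parameter $\epsilon$ used later in the bilinear estimate.
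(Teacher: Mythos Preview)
Your argument is correct and is essentially the paper's own proof: the same three ingredients (the $L^2$ bound, the $L^5$ or $L^4$ Strichartz estimate, and the Bernstein bound $\|P_{N,M}Q_Lu\|_{L^\infty_{txy}}\lesssim (NML)^{1/2}\|P_{N,M}Q_Lu\|_{L^2_{txy}}$) are interpolated with the same weights. The only cosmetic differences are that the paper organizes the interpolation in two steps (first $L^2$--$L^5$ to reach $L^{\frac{4-2\epsilon}{1+\delta}}$, then that against $L^\infty$) rather than as a single three-factor H\"older, and derives the $L^\infty$ bound by combining spatial Bernstein with the Strichartz endpoint $(p,q)=(\infty,2)$ rather than directly from the $(\tau,\xi,\eta)$-support measure; both routes are equivalent.
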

%
%
\begin{proof}
By (\ref{Stri_FR}) with $p=q=5$ , we have the $L^5$-Strichartz estimate 
\begin{equation}\label{L5_Stri}
\|P_{N,M}Q_Lu\|_{L^{5}_{txy}}\lesssim L^{\frac{1}{2}}\|P_{N,M}Q_Lu\|_{L^2_{txy}}.
\end{equation}
By the interpolation between (\ref{L5_Stri})
and a trivial equality $\|P_{N,M}Q_Lu\|_{L^2_{txy}}=L^0\|P_{N,M}Q_Lu\|_{L^2_{txy}}$, 
we have 
\begin{equation}\label{L4-_Stri}
\|P_{N,M}Q_Lu\|_{L^{\frac{4-2\epsilon}{1+\delta}}_{txy}}\lesssim 
L^{\frac{5(1-\delta-\epsilon)}{6(2-\epsilon)}}\|P_NQ_Lu\|_{L^2_{txy}}.
\end{equation}
While, by the Cauchy-Schwartz inequality, we obtain
\[
\begin{split}
\|P_{N,M}Q_Lu\|_{L^{\infty}_{xy}}
&\le \int_{\substack{|(\xi,\eta)|\sim N\\ |\xi+\eta|\sim M}}|\F_{xy}[P_{N,M}Q_Lu](\xi,\eta )|d\xi d\eta\\
&\lesssim (NM)^{\frac{1}{2}}\|P_{N,M}Q_Lu\|_{L^{2}_{xy}}.
\end{split}
\]
Therefore, by using (\ref{Stri_FR}) with $(p,q)=(\infty ,2)$, we have
\begin{equation}\label{inf_Stri}
\|P_{N,M}Q_Lu\|_{L^{\infty}_{txy}}\lesssim (NML)^{\frac{1}{2}}\|P_{N,M}Q_Lu\|_{L^{2}_{txy}}.
\end{equation}
By the interpolation between (\ref{L4-_Stri}) and (\ref{inf_Stri}), 
we obtain (\ref{LP_Stri}). 

By using (\ref{mStri_FR}) instead of (\ref{L5_Stri}) in the above argument, we also get (\ref{mLP_Stri}).
\end{proof}
Next, we give the bilinear Strichartz  estimates. 
\begin{prop}\label{BSE}
Let $R_{K}^{(j)}$ ($j=1,2$) denote the bilinear operator defined by
\[
\begin{split}
\F_{xy}[R_K^{(1)}(u_1,u_2)](\xi, \eta )&=\int \varphi_{K}(\xi_1^2-(\xi -\xi_1)^2)
\widehat{u_1}(\xi_1,\eta_1)\widehat{u_2}(\xi -\xi_1,\eta -\eta_1)d\xi_1d\eta_1,\\
\F_{xy}[R_K^{(2)}(u_1,u_2)](\xi, \eta )&=\int \varphi_{K}(\eta_1^2-(\eta -\eta_1)^2)
\widehat{u_1}(\xi_1,\eta_1)\widehat{u_2}(\xi -\xi_1,\eta -\eta_1)d\xi_1d\eta_1.
\end{split}
\]
For $N_1$, $N_2$, $L_1$, $L_2$, $K\in 2^{\Z}$ with $N_1\ge N_2$, and $j\in \{1,2\}$, we have
\begin{equation}\label{BSE_1}
\begin{split}
&\|R_K^{(j)}(P_{N_1}Q_{L_1}u_1, P_{N_2}Q_{L_2}u_2)\|_{L^2_{txy}}\\
&\lesssim K^{-\frac{1}{2}}N_2^{\frac{1}{2}}L_1^{\frac{1}{2}}L_2^{\frac{1}{2}}\|P_{N_1}Q_{L_1}u_1\|_{L^2_{txy}}\|P_{N_2}Q_{L_2}u_2\|_{L^2_{txy}}.
\end{split}
\end{equation}
\end{prop}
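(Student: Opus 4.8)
The plan is to run the standard bilinear refinement of the Strichartz estimate (Bourgain's argument): pass to the space--time Fourier side, apply Cauchy--Schwarz to the convolution defining $R_K^{(j)}$, and reduce everything to an elementary measure estimate for a set of resonant frequencies. Set $f_i:=P_{N_i}Q_{L_i}u_i$, $\zeta=(\tau,\xi,\eta)$, $\zeta_1=(\tau_1,\xi_1,\eta_1)$, and write $\xi_2=\xi-\xi_1$, $\eta_2=\eta-\eta_1$, $\tau_2=\tau-\tau_1$. Since $|\widetilde{f_i}|$ is supported in $\{\,|(\xi_i,\eta_i)|\sim N_i,\ |\tau_i-\xi_i^3-\eta_i^3|\sim L_i\,\}$ and dominated there by $|\widetilde{u_i}|$, one has the pointwise bound
\[
\big|\widetilde{R_K^{(1)}(f_1,f_2)}(\zeta)\big|\ \le\ \int_{E(\zeta)}|\widetilde{f_1}(\zeta_1)|\,|\widetilde{f_2}(\zeta-\zeta_1)|\,d\zeta_1,
\]
where $E(\zeta)$ is the set of $\zeta_1$ for which, simultaneously, $|(\xi_1,\eta_1)|\sim N_1$, $|(\xi_2,\eta_2)|\sim N_2$, $|\tau_1-\xi_1^3-\eta_1^3|\lesssim L_1$, $|\tau_2-\xi_2^3-\eta_2^3|\lesssim L_2$, and $|\xi_1^2-\xi_2^2|\sim K$ (the last from the support of $\varphi_K$). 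Cauchy--Schwarz in $\zeta_1$, followed by integrating in $\zeta$ and using Fubini and Plancherel, then gives
\[
\|R_K^{(1)}(f_1,f_2)\|_{L^2_{txy}}\ \le\ \Big(\sup_{\zeta}|E(\zeta)|\Big)^{1/2}\|f_1\|_{L^2_{txy}}\|f_2\|_{L^2_{txy}},
\]
so Proposition~\ref{BSE} (for $j=1$) reduces to the uniform bound $\sup_\zeta|E(\zeta)|\lesssim K^{-1}N_2L_1L_2$.

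To prove this measure estimate I would first change variables $(\tau_1,\xi_1,\eta_1)\mapsto(\sigma_1,\xi_1,\eta_1)$ with $\sigma_1:=\tau_1-\xi_1^3-\eta_1^3$ (Jacobian $1$). The condition $|\tau_1-\xi_1^3-\eta_1^3|\lesssim L_1$ becomes $|\sigma_1|\lesssim L_1$, while $|\tau_2-\xi_2^3-\eta_2^3|\lesssim L_2$ becomes $|A(\xi_1)-c|\lesssim L_2$, where $c=c(\tau,\sigma_1,\eta_1)$ does not depend on $\xi_1$ and
\[
A(\xi_1):=\xi_1^3+(\xi-\xi_1)^3=3\xi\xi_1^2-3\xi^2\xi_1+\xi^3,\qquad A'(\xi_1)=3\big(\xi_1^2-(\xi-\xi_1)^2\big).
\]
Now integrate successively from the inside out. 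For fixed $\sigma_1$ and $\eta_1$, the variable $\xi_1$ lies in $\{\,|\xi_1^2-(\xi-\xi_1)^2|\sim K\,\}\cap\{\,|A(\xi_1)-c|\lesssim L_2\,\}$; on the first set $|A'(\xi_1)|=3|\xi_1^2-(\xi-\xi_1)^2|\sim K$, so $A$ is monotone there and the intersection has measure $\lesssim L_2/K$. For fixed $\sigma_1$, the variable $\eta_1=\eta-\eta_2$ ranges over a set of measure $\lesssim N_2$ (since $|\eta_2|\le|(\xi_2,\eta_2)|\lesssim N_2$, using $N_2\le N_1$). Finally $|\sigma_1|\lesssim L_1$ contributes $L_1$. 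Multiplying the three factors yields $|E(\zeta)|\lesssim K^{-1}N_2L_1L_2$, uniformly in $\zeta$, which is what we want.

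The case $j=2$ is handled identically after exchanging the roles of the $\xi$- and $\eta$-variables: one integrates trivially in $\xi_1$ (contributing $N_2$, since $|\xi_2|\lesssim N_2$) and uses, in the last integration, the curvature of $B(\eta_1):=\eta_1^3+(\eta-\eta_1)^3$, whose derivative $3(\eta_1^2-\eta_2^2)$ is precisely the quantity localized by $R_K^{(2)}$. The one substantive point in the whole argument is the identity $A'(\xi_1)=3(\xi_1^2-\xi_2^2)$, which makes $|A'|\sim K$ exactly on the support of $\varphi_K$ and thereby produces the gain $K^{-1/2}$; the accompanying monotonicity of $A$ on $\{\,|\xi_1^2-\xi_2^2|\sim K\,\}$ is immediate because $\mu(\xi_1):=\xi_1^2-(\xi-\xi_1)^2=2\xi\xi_1-\xi^2$ is affine in $\xi_1$, so that set is a union of at most two intervals on each of which $A'=3\mu$ has constant sign with $|A'|\sim K$ (and $\varphi_K(\mu)\ne0$ forces $\xi\ne0$, so the change of variables never degenerates). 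Everything else is routine tracking of the frequency and modulation supports entering $E(\zeta)$; accordingly I expect the full proof to be short and free of genuine obstacles.
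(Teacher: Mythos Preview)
Your argument is correct and follows essentially the same approach as the paper: the paper dualizes and performs a change of variables whose Jacobian is $\big|\det\frac{\partial(\mu,w,z,\nu)}{\partial(\xi_1,\eta_1,\xi_2,\eta_2)}\big|=3|\xi_1^2-\xi_2^2|\sim K$, while you apply Cauchy--Schwarz directly to the convolution and estimate the measure of the resonant set using $A'(\xi_1)=3(\xi_1^2-\xi_2^2)$; these are two presentations of the same computation. The remaining bookkeeping (the $N_2$ from the $\eta$-integration and the $L_1,L_2$ from the modulation constraints) is identical in both.
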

\begin{proof}
We only prove for $j=1$ because the case $j=2$ can be proved by the same way. 
We put $f_i=\F[P_{N_i}Q_{L_i}u_i]$, $\zeta_i=(\xi_i,\eta_i)$ $(i=1,2)$. 
By the duality argument, it suffice to show that
\begin{equation}\label{BSE_pf_1}
\begin{split}
&\left|\int_{\Omega}f_1(\tau_1,\zeta_1)f_2(\tau_2,\zeta_2)f(\tau_1+\tau_2,\zeta_1+\zeta_2)d\tau_1d\tau_2d\zeta_1d\zeta_2\right|\\
&\lesssim K^{-\frac{1}{2}}N_2^{\frac{1}{2}}L_1^{\frac{1}{2}}L_2^{\frac{1}{2}}\|f_1\|_{L^2_{\tau \xi \eta}}\|f_2\|_{L^2_{\tau \xi \eta}}\|f\|_{L^2_{\tau \xi \eta}}
\end{split}
\end{equation}
for any $f\in L^2(\R\times \R^2)$, where
\[
\Omega =\{(\tau_1,\tau_2,\zeta_1,\zeta_2)|\ |\zeta_i|\sim N_i,\ |\tau_i-\xi_i^3-\eta_i^3|\sim L_i\ (i=1,2),\ |\xi_1^2-\xi_2^2|\sim K\}.
\]
By the Cauchy-Schwartz inequality, we have
\begin{equation}\label{BSE_pf_2}
\begin{split}
&\left|\int_{\Omega}f_1(\tau_1,\zeta_1)f_2(\tau_2,\zeta_2)f(\tau_1+\tau_2,\zeta_1+\zeta_2)d\tau_1d\tau_2d\zeta_1d\zeta_2\right|\\
&\lesssim \|f_1\|_{L^2_{\tau\xi\eta}}\|f_2\|_{L^2_{\tau\xi\eta}}
\left(\int_{\Omega}|f(\tau_1+\tau_2,\zeta_1+\zeta_2)|^2d\tau_1d\tau_2d\zeta_1d\zeta_2\right)^{\frac{1}{2}}.
\end{split}
\end{equation}
By applying the variable transform $(\tau_1,\tau_2)\mapsto (\theta_1,\theta_2)$ and $(\zeta_1,\zeta_2)\mapsto (\mu,w,z,\nu)$ as 
\[
\begin{split}
&\theta_i=\tau_i-\xi_i^3-\eta_i^3\ \  (i=1,2),\\
&\mu =\theta_1+\theta_2+\xi_1^3+\xi_2^3+\eta_1^3+\eta_2^3,\ w=\xi_1+\xi_2,\ z=\eta_1+\eta_2,\ \nu =\eta_2,
\end{split}
\]
we have
\[
\begin{split}
&\int_{\Omega}|f(\tau_1+\tau_2,\zeta_1+\zeta_2)|^2d\tau_1d\tau_2d\zeta_1d\zeta_2\\
&\lesssim \int_{\substack{|\theta_1|\sim L_1\\ |\theta_2|\sim L_2}}\left(\int_{|\nu |\lesssim N_2}|f(\mu,w,z)|^2
\ee_{\{|\xi_1^2-\xi_2^2|\sim K\}}(\xi_1,\xi_2)J(\zeta_1,\zeta_2)^{-1}d\mu dwdzd\nu \right)d\theta_1d\theta_2 ,
\end{split}
\]
where
\[
J(\zeta_1,\zeta_2)
=\left|{\rm det}\frac{\partial (\mu ,w,z,\nu )}{\partial (\xi_1,\eta_1,\xi_2,\eta_2)}\right|
=3|\xi_1^2-\xi_2^2|.
\]
Therefore, we obtain
\begin{equation}\label{BSE_pf_3}
\int_{\Omega}|f(\tau_1+\tau_2,\zeta_1+\zeta_2)|^2d\tau_1d\tau_2d\zeta_1d\zeta_2
\lesssim K^{-1}N_2L_1L_2\|f\|_{L^2_{\tau\xi\eta}}.
\end{equation}
As a result, we get (\ref{BSE_pf_1}) from (\ref{BSE_pf_2}) and (\ref{BSE_pf_3}).
\end{proof}
\begin{rem}
In particullar, 
if $N_1\gg N_2$, then we have
\begin{equation}\label{BSE_4}
\begin{split}
&\|P_{N_1}Q_{L_1}u_1\cdot P_{N_2}Q_{L_2}u_2\|_{L^{2}_{txy}}\\
&\lesssim N_1^{-1}N_2^{\frac{1}{2}}L_1^{\frac{1}{2}}L_2^{\frac{1}{2}}\|P_{N_1}Q_{L_1}u_1\|_{L^2_{txy}}\|P_{N_2}Q_{L_2}u_2\|_{L^2_{txy}}
\end{split}
\end{equation}
since the equality
\[
P_{N_1}Q_{L_1}u_1\cdot P_{N_2}Q_{L_2}u_2
=R_K^{(j)}(P_{N_1}Q_{L_1}u_1, P_{N_2}Q_{L_2}u_2)
\]
with $K\sim N_1^2$ holds for $j=1$ or $2$. 
\end{rem}
\begin{cor}
Let $0< \delta \ll 1$, $0<\epsilon <1-\delta$. 
For $N_1$, $N_2$, $M_1$, $M_2$, $L_1$, $L_2\in 2^{\Z}$ with 
$N_1\gg N_2$, we have
\begin{equation}\label{BSE_3}
\begin{split}
&\|P_{N_1,M_1}Q_{L_1}u_1\cdot P_{N_2,M_2}Q_{L_2}u_2\|_{L^{\frac{2}{1+\delta}}_{txy}}\\
&\lesssim J_{\delta,\epsilon}
(L_1L_2)^{\frac{1-\delta}{2}-\frac{\epsilon}{4}}\|P_{N_1,M_1}Q_{L_1}u_1\|_{L^2_{txy}}\|P_{N_2,M_2}Q_{L_2}u_2\|_{L^2_{txy}},
\end{split}
\end{equation}
where
\[
J_{\delta,\epsilon}=J_{\delta,\epsilon}(N_1,M_1,N_2,M_2)=
(N_1M_1N_2M_2)^{\frac{\epsilon}{4}}
(N_1^{-1}N_2^\frac{1}{2})^{1-\delta-\epsilon}.
\]
\end{cor}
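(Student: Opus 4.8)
The plan is to obtain \eqref{BSE_3} by interpolating the bilinear $L^2$-estimate \eqref{BSE_4} against a bilinear $L^{p_0}_{txy}$-estimate at an auxiliary exponent $p_0<\tfrac{2}{1+\delta}$, where the latter is produced from Hölder's inequality together with a space--time Bernstein bound. Throughout write $F=P_{N_1,M_1}Q_{L_1}u_1$ and $G=P_{N_2,M_2}Q_{L_2}u_2$, and set
\[
p_0:=\frac{2(\delta+\epsilon)}{2\delta+\epsilon},\qquad r:=2p_0=\frac{4(\delta+\epsilon)}{2\delta+\epsilon}.
\]
From $0<\epsilon<1-\delta$ one checks the elementary identities $2\le r<\infty$, $p_0\le\tfrac{2}{1+\delta}\le 2$, $\tfrac12-\tfrac1r=\tfrac{\epsilon}{4(\delta+\epsilon)}$, and $\tfrac{1+\delta}{2}=\tfrac{1-\delta-\epsilon}{2}+\tfrac{\delta+\epsilon}{p_0}$; these are precisely what makes the bookkeeping below close.

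First I would record a space--time Bernstein estimate. Interpolating \eqref{inf_Stri} with the trivial identity $\|P_{N,M}Q_Lu\|_{L^2_{txy}}=L^0\|P_{N,M}Q_Lu\|_{L^2_{txy}}$ gives
\[
\|P_{N,M}Q_Lu\|_{L^r_{txy}}\lesssim (NML)^{\frac12-\frac1r}\|P_{N,M}Q_Lu\|_{L^2_{txy}},\qquad 2\le r\le\infty
\]
(one may equally derive this from \eqref{LP_Stri}). Applying it with the value of $r$ above to $F$ and to $G$, and then Hölder's inequality in $(t,x,y)$ with $\tfrac{1}{p_0}=\tfrac1r+\tfrac1r$, gives
\[
\|FG\|_{L^{p_0}_{txy}}\le\|F\|_{L^r_{txy}}\|G\|_{L^r_{txy}}\lesssim (N_1M_1N_2M_2L_1L_2)^{\frac{\epsilon}{4(\delta+\epsilon)}}\|F\|_{L^2_{txy}}\|G\|_{L^2_{txy}}.
\]
On the other hand, since $N_1\gg N_2$, estimate \eqref{BSE_4} yields
\[
\|FG\|_{L^2_{txy}}\lesssim N_1^{-1}N_2^{\frac12}(L_1L_2)^{\frac12}\|F\|_{L^2_{txy}}\|G\|_{L^2_{txy}};
\]
here \eqref{BSE_4} is formally stated for $P_{N_i}Q_{L_i}$-localized inputs, but $F$, $G$ have spatial frequency supports contained in $\{|(\xi,\eta)|\sim N_1\}$, $\{|(\xi,\eta)|\sim N_2\}$, so it applies to them verbatim.

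Finally I would interpolate the last two displays for the single function $h:=FG$. Since $p_0\le\tfrac{2}{1+\delta}\le2$ and $\tfrac{1+\delta}{2}=\tfrac{1-\delta-\epsilon}{2}+\tfrac{\delta+\epsilon}{p_0}$, log-convexity of the $L^p$-norms gives
\[
\|h\|_{L^{\frac{2}{1+\delta}}_{txy}}\le\|h\|_{L^2_{txy}}^{\,1-\delta-\epsilon}\,\|h\|_{L^{p_0}_{txy}}^{\,\delta+\epsilon}.
\]
Inserting the two bounds for $\|h\|_{L^2_{txy}}$ and $\|h\|_{L^{p_0}_{txy}}$ and collecting powers, the exponent of $L_1L_2$ becomes $\tfrac{1-\delta-\epsilon}{2}+\tfrac{\epsilon}{4}=\tfrac{1-\delta}{2}-\tfrac{\epsilon}{4}$, the powers of $N_1,M_1,N_2,M_2$ combine into $(N_1M_1N_2M_2)^{\frac{\epsilon}{4}}(N_1^{-1}N_2^{\frac12})^{1-\delta-\epsilon}=J_{\delta,\epsilon}$, and $\|F\|_{L^2_{txy}}\|G\|_{L^2_{txy}}$ recombines with total exponent $1$; this is exactly \eqref{BSE_3}.

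I do not expect a genuine obstacle: the argument is entirely the choice of the intermediate exponent $p_0$ and the verification of the three interpolation/Hölder identities, all of which follow from $0<\epsilon<1-\delta$. The one point that deserves a line of care is to avoid invoking the dispersive $L^5$-Strichartz estimate at a degenerate endpoint; routing the Bernstein step through \eqref{inf_Stri} (which is just Bernstein in $(x,y)$ combined with the cheap $L^\infty_t$ bound coming from \eqref{Stri_FR}) sidesteps this cleanly.
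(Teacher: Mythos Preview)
Your argument is correct: the choice $p_0=\tfrac{2(\delta+\epsilon)}{2\delta+\epsilon}$ makes the interpolation identities close exactly, and the three ingredients you invoke (the bilinear $L^2$ bound \eqref{BSE_4}, the Bernstein-type estimate \eqref{inf_Stri}, and log-convexity of $L^p$-norms) are the same ones the paper uses.

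The only difference from the paper is the order of interpolation. The paper first combines \eqref{inf_Stri} with \eqref{BSE_4} to produce a refined \emph{bilinear $L^2$} estimate carrying the factor $J_{\delta,\epsilon}^{1/(1-\delta)}$, and then interpolates that against the trivial Cauchy--Schwarz bound $\|FG\|_{L^1}\le\|F\|_{L^2}\|G\|_{L^2}$ to land at $L^{2/(1+\delta)}$. You instead push the Bernstein step to an auxiliary $L^{p_0}$ with $p_0<\tfrac{2}{1+\delta}$ and interpolate upward against \eqref{BSE_4}. Both routes are two-step interpolations of the same endpoint bounds and yield identical exponents; your organization has the mild advantage that each interpolation step is a single application of log-convexity to a single function, whereas the paper's first step mixes H\"older and interpolation on the product.
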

%
%
\begin{proof}
By the H\"older inequality and (\ref{inf_Stri}), we have 
\[
\begin{split}
&\|P_{N_1,M_1}Q_{L_1}u_1\cdot P_{N_2,M_2}Q_{L_2}u_2\|_{L^{2}_{txy}}\\
&\lesssim
\|P_{N_1,M_1}Q_{L_1}u_1\|_{L^{\infty}_{txy}}^{\frac{1}{2}}
\|P_{N_2,M_2}Q_{L_2}u_2\|_{L^{2}_{txy}}^{\frac{1}{2}}
\|P_{N_1,M_1}Q_{L_1}u_1\|_{L^{2}_{txy}}^{\frac{1}{2}}
\|P_{N_2,M_2}Q_{L_2}u_2\|_{L^{\infty}_{txy}}^{\frac{1}{2}}\\
&\lesssim (N_1M_1L_1N_2M_2L_2)^{\frac{1}{4}}
\|P_{N_1,M_1}Q_{L_1}u_1\|_{L^{2}_{txy}}\|P_{N_2,M_2}Q_{L_2}u_2\|_{L^{2}_{txy}}.
\end{split}
\]
By the interpolation between this estimate and (\ref{BSE_4}), we obtain
\begin{equation}\label{BSE_delta}
\begin{split}
&\|P_{N_1,M_1}Q_{L_1}u_1\cdot P_{N_2,M_2}Q_{L_2}u_2\|_{L^{2}_{txy}}\\
&\lesssim J_{\delta,\epsilon}^{\frac{1}{1-\delta}}
(L_1L_2)^{\frac{1}{2}-\frac{\epsilon}{4(1-\delta )}}\|P_{N_1,M_1}Q_{L_1}u_1\|_{L^2_{txy}}\|P_{N_2,M_2}Q_{L_2}u_2\|_{L^2_{txy}}.
\end{split}
\end{equation}
While, by the Cauchy-Schwartz inequality, we have
\begin{equation}\label{L1_bilin}
\|P_{N_1,M_1}Q_{L_1}u_1\cdot P_{N_2,M_2}Q_{L_2}u_2\|_{L^1_{txy}}
\lesssim \|P_{N_1,M_1}Q_{L_1}u_1\|_{L^2_{txy}}\|P_{N_2,M_2}Q_{L_2}u_2\|_{L^2_{txy}}.
\end{equation}
By the interpolation between (\ref{BSE_delta}) and (\ref{L1_bilin}), 
we obtain (\ref{BSE_3}). 
\end{proof}
Here, we prove Proposition~\ref{bilin_est}.
\begin{proof}[Proof of Proposition~\ref{bilin_est}]
By using the embedding $l^1\hookrightarrow l^2$ for the summation $\sum_N\sum_M$,  and the duality argument, we have
\[
\begin{split}
&\|(\partial_x+\partial_y)(uv)\|_{X^{s,-\frac{1}{2},1}}\\
&\lesssim \sum_{N_1,M_1,L_1}\sum_{N_2,M_2,L_2}\left(\sum_{N,M,L}
\frac{\langle N\rangle^sM}{\langle M^2+L\rangle^{\frac{1}{2}}}\right.\\
&\hspace{14ex}\times \left. \sup_{\|w\|_{L^2}=1}\left|\int P_{N_1,M_1}Q_{L_1}u\cdot P_{N_2,M_2}Q_{L_2}v\cdot P_{N,M}Q_Lwdtdxdy\right|\right). 
\end{split}
\]
We put 
\[
u_{N_1,M_1,L_1}=P_{N_1,M_1}Q_{L_1}u,\ v_{N_2,M_2,L_2}=P_{N_2,M_2}Q_{L_2}v,\ w_{N,M,L}=P_{N,M}Q_Lw,
\]
\[
f_{N_1,M_1,L_1}=\langle N_1\rangle^s\langle M_1^2+L_1\rangle^{\frac{1-\delta}{2}}u_{N_1,M_1,L_1},\ 
g_{N_2,M_2,L_2}=\langle N_2\rangle^s\langle M_2^2+L_2\rangle^{\frac{1-\delta}{2}}v_{N_2,M_2,L_2},
\]
for $0<\delta \ll 1$ and
\[
I=\left|\int u_{N_1,M_1,L_1}\cdot v_{N_2,M_2,L_2}\cdot w_{N,M,L}dtdxdy\right|.
\]
We note that $L_1^{b}\|u_{N_1,M_1,L_1}\|_{L^2_{txy}}
\lesssim \langle N_1\rangle^{-s}\|f_{N_1,M_1,L_1}\|_{L^2_{txy}}$ 
and $L_2^{b}\|v_{N_2,M_2,L_2}\|_{L^2_{txy}}
\lesssim \langle N_2\rangle^{-s}\|g_{N_2,M_2,L_2}\|_{L^2_{txy}}$
hold for $b\le \frac{1-\delta}{2}$ 
since $L_i\lesssim \langle M_i^2+L_i\rangle$ $(i=1,2)$. 

By the symmetry, we can assume $N_1\gtrsim N_2$. 
We first consider the case $1\ge N_1\gtrsim N_2$. 
We note that
\begin{equation}\label{L2p_Stri}
\|P_{N,M}Q_Lu\|_{L^{\frac{2}{1-\delta}}_{txy}}
\lesssim L^{\frac{5}{6}\delta}\|P_{N,M}Q_Lu\|_{L^2_{txy}}
\end{equation}
holds by the interpolation between (\ref{L5_Stri}) 
and a trivial equality $\|P_{N,M}Q_Lu\|_{L^2_{txy}}=L^0\|P_{N,M}Q_Lu\|_{L^2_{txy}}$.
By the H\"older inequality,  (\ref{LP_Stri}), and (\ref{L2p_Stri}), we have
\[
\begin{split}
I&\lesssim \|u_{N_1,M_1,L_1}\|_{L^{\frac{4}{1+\delta}}_{txy}}
\|v_{N_2,M_2,L_2}\|_{L^{\frac{4}{1+\delta}}_{txy}}
\|w_{N,M,L}\|_{L^{\frac{2}{1-\delta}}_{txy}}\\
&\lesssim 
(N_1M_1N_2M_2)^{\frac{\epsilon}{2}} L^{\frac{5}{6}\delta}\|f_{N_1,M_1,L_1}\|_{L^2_{txy}}\|g_{N_2,M_2,L_2}\|_{L^2_{txy}}\|w_{N,M,L}\|_{L^2_{txy}}
\end{split}
\]
since $\langle N_i\rangle^{s}\sim 1$ $(i=1,2)$ for any $s\in \R$. 
Therefore, we obtain
\begin{equation}\label{low_freq_ineq}
\begin{split}
&\sum_{N\lesssim 1}\sum_{M\lesssim N}\sum_{L}
\frac{\langle N\rangle^sM}{\langle M^2+L\rangle^{\frac{1}{2}}}\sup_{\|w\|_{L^2}=1}I\\
&\lesssim 
(N_1M_1N_2M_2)^{\frac{\epsilon}{2}}\|f_{N_1,M_1,L_1}\|_{L^2_{txy}}\|g_{N_2,M_2,L_2}\|_{L^2_{txy}}
\end{split}
\end{equation}
since
\[
\sum_{L}\frac{L^{\frac{5}{6}\delta}}{\langle M^2+L\rangle^{\frac{1}{2}}}
\lesssim \sum_{L\lesssim \langle M\rangle^2}\frac{L^{\frac{5}{6}\delta}}{\langle M\rangle}
+\sum_{L\gtrsim \langle M\rangle^2}L^{-(\frac{1}{2}-\frac{5}{6}\delta)}
\lesssim \langle M\rangle^{-(1-\frac{5}{3}\delta)}\lesssim 1
\]
and
\[
\sum_{N\lesssim 1}\sum_{M\lesssim N}
\langle N\rangle^sM
\sim \sum_{N\lesssim 1}\sum_{M\lesssim N}M
\lesssim \sum_{N\lesssim 1}N\lesssim 1
\]
for any $s\in \R$. 
By using  (\ref{low_freq_ineq})and the Cauchy-Schwartz inequality for the summations
$\sum_{N_1,M_1\lesssim 1}$ and $\sum_{N_2,M_2\lesssim 1}$, we have
\[
\begin{split}
&\sum_{N_1,M_1\lesssim 1}\sum_{L_1}\sum_{N_2,M_2\lesssim 1}\sum_{L_2}
\left(\sum_{N,M,L}\frac{\langle N\rangle^sM}{\langle M^2+L\rangle^{\frac{1}{2}}}\sup_{\|w\|_{L^2}=1}I\right)
\lesssim 
\|u\|_{X^{s,\frac{1-\delta}{2},1}}\|v\|_{X^{s,\frac{1-\delta}{2},1}}
\end{split}
\]
for any $s\in \R$. 

Next, we consider the case $N_1\gtrsim N_2$, $N_1\ge 1$. 
It suffice to show that
\begin{equation}\label{bilin_pf}
\begin{split}
&\sum_{N,M,L}\frac{\langle N\rangle^sM}{\langle M^2+L\rangle^{\frac{1}{2}}}
\sup_{\|w\|_{L^2}=1}I\\
&\lesssim 
N_{1}^{-\epsilon}(M_1M_2)^{\frac{\epsilon}{4}}
\|f_{N_1,M_1,L_1}\|_{L^2_{txy}}\|g_{N_2,M_2,L_2}\|_{L^2_{txy}}
\end{split}
\end{equation}
for small $\epsilon >0$. Indeed, 
(\ref{bilin_pf}) and the Cauchy-Schwartz inequality for the summations 
$\sum_{N_1,M_1}$ and $\sum_{N_2,M_2}$ imply
\[
\begin{split}
&\sum_{\substack{N_1,M_1,L_1\\ N_1\ge 1}}\sum_{\substack{N_2,M_2,L_2\\N_2\lesssim N_1}}
\left(\sum_{N,M,L}\frac{\langle N\rangle^sM}{\langle M^2+L\rangle^{\frac{1}{2}}}\sup_{\|w\|_{L^2}=1}I\right)\\
&\lesssim 
\left(\sum_{N_1\ge 1}\sum_{M_1\lesssim N_1}
\sum_{N_2\lesssim N_1}\sum_{M_2\lesssim N_2}
N_1^{-2\epsilon}(M_1M_2)^{\frac{\epsilon}{2}}\right)^{\frac{1}{2}}\\
&\ \ \ \ \times 
\left\{\sum_{N_1}\sum_{M_1}\left(\sum_{L_1}\|f_{N_1,M_1,L_1}\|_{L^2_{txy}}\right)^2\right\}^{\frac{1}{2}}
\left\{\sum_{N_2}\sum_{M_2}\left(\sum_{L_2}\|g_{N_2,M_2,L_2}\|_{L^2_{txy}}\right)^2\right\}^{\frac{1}{2}}\\
&\lesssim \|u\|_{X^{s,\frac{1-\delta}{2},1}}\|v\|_{X^{s,\frac{1-\delta}{2},1}}.
\end{split}
\]
Now, we prove (\ref{bilin_pf}). \\
\kuuhaku \\
\underline{Case\ 1:\ $N_1\sim N_2\gg N$,\ $N_1\ge 1$.}

We note that $M\lesssim \max\{M_1,M_2\}$ since $\xi +\eta =(\xi_1+\eta_1)+(\xi-\xi_1+\eta -\eta_1)$. 
By the symmetry, we can assume $M\lesssim M_1$. 
By the H\"older inequality, we have
\[
I\lesssim \|u_{N_1,M_1,L_1}\|_{L^{\frac{2}{1-\delta}}_{txy}}
\|v_{N_2,M_2,L_2}\cdot w_{N,M,L}\|_{L^{\frac{2}{1+\delta}}_{txy}}. 
\]
Furthermore, we have
\[
\|u_{N_1,M_1,L_1}\|_{L^{\frac{2}{1-\delta}}_{txy}}
\lesssim L_1^{\frac{5}{6}}\|u_{N_1,M_1,L_1}\|_{L^2_{txy}}
\lesssim \frac{N_1^{-s}}{\langle M_1^2+L_1\rangle^{\frac{1-\delta}{2}-\frac{5}{6}\delta}}\|f_{N_1,M_1,L_1}\|_{L^2_{txy}}
\]
by (\ref{L2p_Stri}), and we have
\[
\begin{split}
&\|v_{N_2,M_2,L_2}\cdot w_{N,M,L}\|_{L^{\frac{2}{1+\delta}}_{txy}}\\
&\lesssim J_{\delta,\epsilon}(N_2,M_2,N,M)(L_2L)^{\frac{1-\delta}{2}-\frac{\epsilon}{4}}
\|v_{N_2,M_2,L_2}\|_{L^2_{txy}}\|w_{N,M,L}\|_{L^2_{txy}}\\
&\lesssim  (M_1M_2)^{\frac{\epsilon}{4}}N_2^{-s-1+\delta+\frac{5}{4}\epsilon}
N^{\frac{1-\delta}{2}-\frac{\epsilon}{4}}L^{\frac{1-\delta}{2}-\frac{\epsilon}{4}}\|g_{N_2,M_2,L_2}\|_{L^2_{txy}}\|w_{N,M,L}\|_{L^2_{txy}}
\end{split}
\]
by (\ref{BSE_3}) and $M\lesssim M_1$. 
Therefore, if we choose $\epsilon >0$ as $\epsilon =\frac{10}{3}\delta$, we obtain
\[
\begin{split}
&\sum_{N\ll N_1}\sum_{M\lesssim M_1}\sum_{L}
\frac{\langle N\rangle^sM}{\langle M^2+L\rangle^{\frac{1}{2}}}\sup_{\|w\|_{L^2}=1}I\\
&\lesssim (M_1M_2)^{\frac{\epsilon}{4}}N_1^{-s}N_2^{-s-1+\delta+\frac{5}{4}\epsilon}
\|f_{N_1,M_1,L_1}\|_{L^2_{txy}}\|g_{N_2,M_2,L_2}\|_{L^2_{txy}}\\
&\hspace{10ex}\times \left(\sum_{N\ll N_1}\langle N\rangle^sN^{\frac{1-\delta}{2}-\frac{\epsilon}{4}}\sum_{M\lesssim M_1}
\frac{M}{\langle M_1^2+L_1\rangle^{\frac{1-\delta}{2}-\frac{5}{6}\delta}}
\sum_{L}\frac{L^{\frac{1-\delta}{2}-\frac{\epsilon}{4}}}{\langle M^2+L\rangle^{\frac{1}{2}}}\right)\\
&\lesssim N_1^{-\epsilon}(M_1M_2)^{\frac{\epsilon}{4}}N_1^{-s-\frac{1}{2}+\frac{\delta}{2}+2\epsilon}\|f_{N_1,M_1,L_1}\|_{L^2_{txy}}\|g_{N_2,M_2,L_2}\|_{L^2_{txy}}
\end{split}
\]
for $s\ge -\frac{1-\delta}{2}+\frac{\epsilon}{4}$ since
\[
\sum_{M\lesssim M_1}
\frac{M}{\langle M_1^2+L_1\rangle^{\frac{1-\delta}{2}-\frac{5}{6}\delta}}
\sum_{L}\frac{L^{\frac{1-\delta}{2}-\frac{\epsilon}{4}}}{\langle M^2+L\rangle^{\frac{1}{2}}}
\lesssim \sum_{M\lesssim M_1}
\frac{M^{1-\delta-\frac{\epsilon}{2}}}{\langle M_1\rangle^{1-\frac{8}{3}\delta}}
\lesssim 1. 
\]
As a result, we get (\ref{bilin_pf}) for $s> -\frac{1}{2}$ 
if we choose $\delta >0$ as $0<\delta <\frac{6}{43}\left(s+\frac{1}{2}\right)$. \\
\\
\underline{Case\ 2:\ $N\sim N_1\gg N_2$,\ $N_1\ge 1$. }

By the H\"older inequality, we have
\[
I\lesssim \|u_{N_1,M_1,L_1}\cdot v_{N_2,M_2,L_2}\|_{L^\frac{2}{1+\delta}_{txy}}
\|w_{N,M,L}\|_{L^\frac{2}{1-\delta}_{txy}}. 
\]
Furthermore, we have
\[
\|w_{N,M,L}\|_{L^\frac{2}{1-\delta}_{txy}}
\lesssim L^{\frac{5}{6}\delta}\|w_{N,M,L}\|_{L^2_{txy}}
\]
by (\ref{L2p_Stri}), and we have
\[
\begin{split}
&\|u_{N_1,M_1,L_1}\cdot v_{N_2,M_2,L_2}\|_{L^\frac{2}{1+\delta}_{txy}}\\
&\lesssim J_{\delta,\epsilon}(N_1,M_1,N_2,M_2)(L_1L_2)^{\frac{1-\delta}{2}-\frac{\epsilon}{4}}
\|u_{N_1,M_1,L_1}\|_{L^2_{txy}}\|v_{N_2,M_2,L_2}\|_{L^2_{txy}}\\
&\lesssim (M_1M_2)^{\frac{\epsilon}{4}}N_1^{-s-1+\delta+\frac{5}{4}\epsilon}
\langle N_2\rangle^{-s}N_2^{\frac{1-\delta}{2}-\frac{\epsilon}{4}}
\|f_{N_1,M_1,L_1}\|_{L^2_{txy}}\|g_{N_2,M_2,L_2}\|_{L^2_{txy}}
\end{split}
\]
by (\ref{BSE_3}). 
Therefore, if $s\le \frac{1-\delta}{2}-\frac{\epsilon}{4}$, we obtain
\[
\begin{split}
&\sum_{N\sim N_1}\sum_{M\lesssim N}\sum_{L}
\frac{\langle N\rangle^sM}{\langle M^2+L\rangle^{\frac{1}{2}}}\sup_{\|w\|_{L^2}=1}I\\
&\lesssim (M_1M_2)^{\frac{\epsilon}{4}}N_1^{-s-\frac{1-\delta}{2}+\epsilon}
\|f_{N_1,M_1,L_1}\|_{L^2_{txy}}\|g_{N_2,M_2,L_2}\|_{L^2_{txy}}
\left(\sum_{M\lesssim N_1}M
\sum_{L}\frac{L^{\frac{5}{6}\delta}}{\langle M^2+L\rangle^{\frac{1}{2}}}\right)\\
&\lesssim N_1^{-\epsilon}(M_1M_2)^{\frac{\epsilon}{4}}
N_1^{-s-\frac{1}{2}+\frac{13}{6}\delta+2\epsilon}\|f_{N_1,M_1,L_1}\|_{L^2_{txy}}\|g_{N_2,M_2,L_2}\|_{L^2_{txy}},
\end{split}
\]
since
\[
\sum_{L}\frac{L^{\frac{5}{6}\delta}}{\langle M^2+L\rangle^{\frac{1}{2}}}
\lesssim M^{-(1-\frac{5}{3}\delta)}. 
\]
As a result, we get (\ref{bilin_pf}) for $\frac{1}{2}>s> -\frac{1}{2}$ 
if we choose $\delta >0$ and $\epsilon >0$ as
$0<\epsilon <\frac{1}{2}(s+\frac{1}{2})$, $0<\delta <\min\left\{\frac{6}{13}\left(s+\frac{1}{2}-2\epsilon\right), 2\left(\frac{1}{2}-s-\frac{\epsilon}{2}\right)\right\}$. 

While if $s\ge \frac{1}{2}$, then we have
\[
I\lesssim (M_1M_2)^{\frac{\epsilon}{4}}N_1^{-s-\frac{1-\delta}{2}+\epsilon}L^{\frac{5}{6}\delta}
\|f_{N_1,M_1,L_1}\|_{L^2_{txy}}\|g_{N_2,M_2,L_2}\|_{L^2_{txy}}\|w_{N,M,L}\|_{L^2_{txy}}
\]
by the same argument with using $\langle N_2\rangle^{-s}\lesssim 1$. 
Therefore, we obtain
\[
\begin{split}
&\sum_{N\sim N_1}\sum_{M\lesssim N}\sum_{L}
\frac{\langle N\rangle^sM}{\langle M^2+L\rangle^{\frac{1}{2}}}\sup_{\|w\|_{L^2}=1}I\\
&\lesssim N_1^{-\epsilon}(M_1M_2)^{\frac{\epsilon}{4}}
N_1^{-\frac{1}{2}+\frac{13}{6}\delta+2\epsilon}\|f_{N_1,M_1,L_1}\|_{L^2_{txy}}\|g_{N_2,M_2,L_2}\|_{L^2_{txy}},
\end{split}
\]
which implies (\ref{bilin_pf}) since $-\frac{1}{2}+\frac{13}{6}\delta+2\epsilon<0$.\\
\\
\underline{Case\ 3:\ $N\sim N_1\sim N_2\ge 1$}

We can assume $M\lesssim M_1$ such as Case\ 1.
We split $v_{N_2,M_2,L_2}$ and $w_{N,M,L}$ into
\[
v_{N_2,M_2,L_2}=\sum_{i=1}^3R_{i}v_{N_2,M_2,L_2},\ \ w_{N,M,L}=\sum_{j=1}^3R_{j}w_{N,M,L}. 
\]
We put
\[
I_{i,j}=\left|\int u_{N_1,M_1,L_1}\cdot R_{i}v_{N_2,M_2,L_2}\cdot R_{j}w_{N,M,L}dtdxdy\right|, 
\]
where $R_i$ $(i=1,2,3)$ are projections given by
\[
\F_{xy}[R_1f]=\ee_{\{|\xi |\gg |\eta |\}}\widehat{f},\ \F_{xy}[R_2f]=\ee_{\{|\xi |\sim |\eta |\}}\widehat{f},\ \F_{xy}[R_3f]=\ee_{\{|\xi |\ll |\eta |\}}\widehat{f}.
\]
We note that $\F_{xy}[w_{N,M,L}]$ is supported in at least one of $\{(\xi,\eta)|\ |\xi|\sim N\}$ or $\{(\xi,\eta)|\ |\eta |\sim N\}$. 
By the symmetry, we can assume $\supp \F_{xy}[w_{N,M,L}]\subset \{(\xi,\eta)|\ |\xi|\sim N\}$. 
Then, it suffice to show the estimate for $I_{i,j}$ with $i=1,2,3$, $j=1,2$. \\
\\
\underline{Estimate for $I_{1,1}$}

In this case, we note that $N\sim N_1\sim N_2\sim M\sim M_1\sim M_2$ and
\[
|\xi \xi_1\xi_2+\eta \eta_1\eta_2|\sim |\xi \xi_1 \xi_2|\sim N_1^3
\]
for $(\xi_1,\eta_1)\in \supp \F_{xy}[u_{N_1,M_1,L_1}]$, 
$(\xi_2,\eta_2)\in \supp \F_{xy}[v_{N_2,M_2,L_2}]$
with $\xi_1+\xi_2=\xi$, $\eta_1+\eta_2=\eta$. 
It implies
\[
\max\{L_1,L_2,L\}\gtrsim N_1^3
\]
since 
\[
|(\tau_1-\xi_1^3-\eta_1^3)+(\tau_2-\xi_2^3-\eta_2^3)-(\tau-\xi^3-\eta^3)|
=3|\xi\xi_1\xi_2+\eta\eta_1\eta_2|.
\]
holds for $(\tau_i,\xi_i,\eta_i)$ $(i=1,2)$ with $(\tau,\xi,\eta)=(\tau_1+\tau_2,\xi_1+\xi_2,\eta_1+\eta_2)$.\\
\kuuhaku \\
(i)\ For the case $L\gtrsim N_1^3$

By the H\"older inequality, (\ref{LP_Stri}), and (\ref{L2p_Stri}), we have
\[
\begin{split}
I&\lesssim \|u_{N_1,M_1,L_1}\|_{L^{\frac{4}{1+\delta}}_{txy}}
\|v_{N_2,M_2,L_2}\|_{L^{\frac{4}{1+\delta}}_{txy}}
\|w_{N,M,L}\|_{L^{\frac{2}{1-\delta}}_{txy}}\\
&\lesssim 
(N_1M_1N_2M_2)^{\frac{\epsilon}{4}}
(L_1L_2)^{\frac{5(1-\delta)}{12}-\frac{\epsilon}{6}}L^{\frac{5}{6}\delta}\|u_{N_1,M_1,L_1}\|_{L^2_{txy}}\|v_{N_2,M_2,L_2}\|_{L^2_{txy}}\|w_{N,M,L}\|_{L^2_{txy}}\\
&\sim N_1^{-\epsilon}(M_1M_2)^{\frac{\epsilon}{4}}N_1^{-2s+\frac{3}{2}\epsilon}L^{\frac{5}{6}\delta}\|f_{N_1,M_1,L_1}\|_{L^2_{txy}}\|g_{N_2,M_2,L_2}\|_{L^2_{txy}}\|w_{N,M,L}\|_{L^2_{txy}}.
\end{split}
\]
Therefore, we obtain
\[
\begin{split}
&\sum_{N\sim N_1}\sum_{M\lesssim N}\sum_{L\gtrsim N_1^3}
\frac{\langle N\rangle^sM}{\langle M^2+L\rangle^{\frac{1}{2}}}\sup_{\|w\|_{L^2}=1}I\\
&\lesssim N_1^{-\epsilon}(M_1M_2)^{\frac{\epsilon}{4}}
N_1^{-s+\frac{3}{2}\epsilon}
\|f_{N_1,M_1,L_1}\|_{L^2_{txy}}\|g_{N_2,M_2,L_2}\|_{L^2_{txy}}
\left(\sum_{M\lesssim N_1}M
\sum_{L\gtrsim N_1^3}\frac{L^{\frac{5}{6}\delta}}{\langle M^2+L\rangle^{\frac{1}{2}}}\right)\\
&\lesssim N_1^{-\epsilon}(M_1M_2)^{\frac{\epsilon}{4}}N_1^{-s-\frac{1}{2}+\frac{5}{2}\delta+\frac{3}{2}\epsilon}\|f_{N_1,M_1,L_1}\|_{L^2_{txy}}\|g_{N_2,M_2,L_2}\|_{L^2_{txy}},
\end{split}
\]
since
\[
\sum_{L\gtrsim N_1^3}\frac{L^{\frac{5}{6}\delta}}{\langle M^2+L\rangle^{\frac{1}{2}}}
\lesssim \sum_{L\gtrsim N_1^3}L^{-(\frac{1}{2}-\frac{5}{6}\delta)}
\lesssim N_1^{-\frac{3}{2}+\frac{5}{2}\delta}. 
\]
As a result, we get (\ref{bilin_pf}) for $s> -\frac{1}{2}$ 
if we choose $\delta >0$ and $\epsilon >0$ as 
$0<\epsilon <\frac{2}{3}(s+\frac{1}{2})$, $0<\delta <\frac{2}{5}\left(s+\frac{1}{2}-\frac{3}{2}\epsilon\right)$. \\
\\
(ii)\ For the case $L_1\gtrsim N_1^3$

By the H\"older inequality, (\ref{L2p_Stri}), and (\ref{LP_Stri}), we have
\[
\begin{split}
I&\lesssim \|u_{N_1,M_1,L_1}\|_{L^{\frac{2}{1-\delta}}_{txy}}
\|v_{N_2,M_2,L_2}\|_{L^{\frac{4}{1+\delta}}_{txy}}
\|w_{N,M,L}\|_{L^{\frac{4}{1+\delta}}_{txy}}\\
&\lesssim 
L_1^{\frac{5}{6}\delta}(N_2M_2NM)^{\frac{\epsilon}{4}}
(L_2L)^{\frac{5(1-\delta)}{12}-\frac{\epsilon}{6}}\|u_{N_1,M_1,L_1}\|_{L^2_{txy}}\|v_{N_2,M_2,L_2}\|_{L^2_{txy}}\|w_{N,M,L}\|_{L^2_{txy}}\\
&\lesssim N_1^{-\epsilon}(M_1M_2)^{\frac{\epsilon}{4}}
N_1^{-2s-\frac{3}{2}+4\delta+\frac{3}{2}\epsilon}
L^{\frac{5(1-\delta)}{12}-\frac{\epsilon}{6}}\|f_{N_1,M_1,L_1}\|_{L^2_{txy}}\|g_{N_2,M_2,L_2}\|_{L^2_{txy}}\|w_{N,M,L}\|_{L^2_{txy}}
\end{split}
\]
since 
$L_1^{\frac{5}{6}\delta}\langle M_1^2+L_1\rangle^{-\frac{1-\delta }{2}}
\lesssim L_1^{-(\frac{1}{2}-\frac{4}{3}\delta)}\lesssim N_1^{-\frac{3}{2}+4\delta}$. 
Therefore, we obtain
\[
\begin{split}
&\sum_{N\sim N_1}\sum_{M\lesssim N}\sum_{L}
\frac{\langle N\rangle^sM}{\langle M^2+L\rangle^{\frac{1}{2}}}\sup_{\|w\|_{L^2}=1}I\\
&\lesssim N_1^{-\epsilon}(M_1M_2)^{\frac{\epsilon}{4}}
N_1^{-s-\frac{3}{2}+4\delta+\frac{3}{2}\epsilon}
\|f_{N_1,M_1,L_1}\|_{L^2_{txy}}\|g_{N_2,M_2,L_2}\|_{L^2_{txy}}
\left(\sum_{M\lesssim N_1}M
\sum_{L}\frac{L^{\frac{5(1-\delta )-2\epsilon}{12}}}{\langle M^2+L\rangle^{\frac{1}{2}}}\right)\\
&\lesssim N_1^{-\epsilon}(M_1M_2)^{\frac{\epsilon}{4}}
N_1^{-s-\frac{2}{3}+\frac{19}{6}\delta+\frac{7}{6}\epsilon}\|f_{N_1,M_1,L_1}\|_{L^2_{txy}}\|g_{N_2,M_2,L_2}\|_{L^2_{txy}},
\end{split}
\]
since
\[
\sum_{L}\frac{L^{\frac{5(1-\delta )-2\epsilon}{12}}}{\langle M^2+L\rangle^{\frac{1}{2}}}
\lesssim 
M^{-\frac{1+5\delta+2\epsilon}{6}}. 
\]
As a result, we get (\ref{bilin_pf}) for $s> -\frac{2}{3}$ 
if we choose $\delta >0$ and $\epsilon >0$ as
$0<\epsilon <\frac{6}{7}(s+\frac{1}{2})$, $0<\delta <\frac{6}{19}\left(s+\frac{2}{3}-\frac{7}{6}\epsilon\right)$. 
The  case $L_2\gtrsim N^3$ is same.\\
\\
\underline{Estimate for $I_{2,2}$}

In this case, we have
\[
|\xi_2|\sim |\eta_2|\sim N_2,\ |\xi|\sim |\eta|\sim N.
\]
By the H\"older inequality, (\ref{L2p_Stri}), (\ref{mLP_Stri}), 
and $M\lesssim M_1$, we have
\[
\begin{split}
I&\lesssim \|u_{N_1,M_1,L_1}\|_{L^{\frac{2}{1-\delta}}_{txy}}
\|v_{N_2,M_2,L_2}\|_{L^{\frac{4}{1+\delta}}_{txy}}
\|w_{N,M,L}\|_{L^{\frac{4}{1+\delta}}_{txy}}\\
&\lesssim 
L_1^{\frac{5}{6}\delta}(N_2M_2NM)^{\frac{\epsilon}{4}}(N_2N)^{-\frac{1-\delta-\epsilon}{4}}
(L_2L)^{\frac{1-\delta}{2}-\frac{\epsilon}{4}}\|u_{N_1,M_1,L_1}\|_{L^2_{txy}}\|v_{N_2,M_2,L_2}\|_{L^2_{txy}}\|w_{N,M,L}\|_{L^2_{txy}}\\
&\lesssim (M_1M_2)^{\frac{\epsilon}{4}}
N_1^{-2s-\frac{1-\delta}{2}+\epsilon}\frac{L^{\frac{1-\delta}{2}-\frac{\epsilon}{4}}}{\langle M_1^2+L_1\rangle^{\frac{1-\delta}{2}-\frac{5}{6}\delta}}\|f_{N_1,M_1,L_1}\|_{L^2_{txy}}\|g_{N_2,M_2,L_2}\|_{L^2_{txy}}\|w_{N,M,L}\|_{L^2_{txy}}.
\end{split}
\]
Therefore,  we get (\ref{bilin_pf}) for $s> -\frac{1}{2}$ by the same argument as in Case 1. \\
\\
\underline{Estimate for $I_{1,2}$}

In this case, we have
\[
|\eta_2^2-\eta^2|\sim |\eta|^2 \sim N^2
\]
Therefore, we obtain
\[
\begin{split}
&\|R_{1}v_{N_2,M_2,L_2}\cdot R_{2}w_{N,M,L}\|_{L^{2}_{txy}}
\lesssim N^{-\frac{1}{2}}L_2^{\frac{1}{2}}L^{\frac{1}{2}}\|R_{1}v_{N_2,M_2,L_2}\|_{L^2_{txy}}\|R_{2}w_{N,M,L}\|_{L^2_{txy}}
\end{split}
\]
by (\ref{BSE_1}) since 
\[
R_{1}v_{N_2,M_2,L_2}\cdot R_{2}w_{N,M,L}
=R_{K}^{(2)}(R_{1}v_{N_2,M_2,L_2}\cdot R_{2}w_{N,M,L})
\]
with $K\sim N$ holds. 
While, by the Cauchy-Schwartz inequality, we have
\[
\|R_{1}u_{N_1,M_1,L_1}\cdot R_{j}v_{N_2,M_2,L_2}\|_{L^{1}_{txy}}
\lesssim \|R_{1}v_{N_2,M_2,L_2}\|_{L^2_{txy}}\|R_{2}w_{N,M,L}\|_{L^2_{txy}}.
\]
Therefore, we obtain the bilinear Stirchartz estimate such as (\ref{BSE_3}) for the
product $R_{1}v_{N_2,M_2,L_2}\cdot R_{2}w_{N,M,L}$ ,
and we get (\ref{bilin_pf}) for $s>-\frac{1}{2}$ 
by the same argument as in Case 1 since $M\lesssim M_1$.
The estimates for $I_{2,1}$, $I_{3,1}$, and $I_{3,2}$ are 
obtained by the same way. 
\end{proof}
\begin{rem}\label{be_mod_rem}
We can also obtain the bilinear estimate
\[
\|(\partial_x+\partial_y)(uv)\|_{X^{s,-\frac{1}{2},1}}
\le \frac{C_3}{2}\left(\|u\|_{X^{s,\frac{1-\delta}{2},1}}\|v\|_{X^{s_0,\frac{1-\delta}{2},1}}
+\|u\|_{X^{s_0,\frac{1-\delta}{2},1}}\|v\|_{X^{s,\frac{1-\delta}{2},1}}\right)
\]
for $s\ge s_0>-\frac{1}{2}$ by using 
\[
\langle \xi\rangle^s\lesssim \langle \xi\rangle^{s_0}
\left(\langle \xi_1\rangle^{s-s_0}+\langle \xi-\xi_1\rangle^{s-s_0}\right). 
\]
\end{rem}
%
%
\section{Proof of the well-posedness}
In this section, we prove Theorem~\ref{LWP} and \ref{GWP}. 
For $T>0$ and $v_0\in H^s(\R^2)$, we define the map $\Phi_{T, v_0}$ as
\[
\Phi_{T, v_0}(v)(t):=\psi(t)\left(
W(t)u_0 +\int_0^tW(t-t')(\partial_x+\partial_y)(\psi_T(t')^2v(t')^2)dt'
\right), 
\] 
where $\psi$ is cut-off function defined in Section\ 2, 
and $\psi_T(t)=\psi\left(\frac{t}{T}\right)$.
For $R>0$ and Banach space $X$, we define 
$B_R(X):=\{u\in X|\ \|u\|_{X}\le R\}$. 
To obtain the well-posedness of (\ref{ZKB_sym}) in $H^s(\R^2)$, 
we prove that $\Phi_{T, v_0}$ is a contraction map 
on closed subset of $X^{s,\frac{1}{2},1}$. 
\begin{lemm}\label{sol_loc}
Let $0<T\le 1$, $0<\delta \le 1$.  
There exist $C_4>0$ and $\mu =\mu (\delta )>0$, such that for any $u\in X^{s,\frac{1}{2},1}$, 
we have
\[
\|\psi_Tu\|_{X^{s,\frac{1-\delta}{2},1}}
\le C_4T^{\mu}\|u\|_{X^{s,\frac{1}{2},1}}. 
\]
\end{lemm}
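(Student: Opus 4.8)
The plan is to reduce the estimate to a statement about the $\tau$-variable alone, exactly as in the classical argument of Bourgain and its refinement by Molinet--Ribaud and Molinet--Vento. First I would fix $N$ and $M$ and observe that, since the spatial projection $P_{N,M}$ commutes with multiplication in time by $\psi_T$, it suffices to prove, for each fixed frequency block and uniformly in $(\xi,\eta)$ with $|(\xi,\eta)|\sim N$, $|\xi+\eta|\sim M$, the one-dimensional estimate
\[
\sum_{L}\langle M^2+L\rangle^{\frac{1-\delta}{2}}
\|\varphi_L(\tau)\F_t[\psi_T(t)h](\tau)\|_{L^2_\tau}
\lesssim T^{\mu}\sum_{L}\langle M^2+L\rangle^{\frac{1}{2}}
\|\varphi_L(\tau)\widehat h(\tau)\|_{L^2_\tau},
\]
where $h=\F_{xy}[P_{N,M}u](\,\cdot\,,\xi,\eta)$ and the weight $\langle M^2+\cdot\rangle$ plays the role of the modulation weight $\langle(\xi+\eta)^2+i(\tau-\xi^3-\eta^3)\rangle$ after the substitution $\tau\mapsto\tau-\xi^3-\eta^3$ absorbed into $h$. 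Summing the resulting inequality in $\ell^2_N\ell^2_M$ (or rather $\ell^2_N\ell^1_M$ as in the definition of $X^{s,b,1}$, with the $\langle N\rangle^s$ weights untouched) then yields the claim.

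Next I would carry out the one-dimensional estimate by a Littlewood--Paley decomposition in $\tau$ on both $\psi_T$ and $h$. Writing $\F_t[\psi_T(t)h]=\F_t[\psi_T]*_\tau\widehat h$, I decompose $\widehat h=\sum_{L_2}\varphi_{L_2}\widehat h$ and $\F_t[\psi_T]=\sum_{L_1}\varphi_{L_1}\F_t[\psi_T]$, and split the output frequency $L$ into the three regimes $L_1\ll L$ (so $L\sim L_2$), $L_2\ll L$ (so $L\sim L_1$), and $L_1\sim L_2\gtrsim L$, exactly the trichotomy already used in the proof of Proposition~\ref{duam_est} for $K_3$. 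In each regime one applies Young's inequality in $\tau$ and then uses the key gain from time-localization: the scaling identity $\F_t[\psi_T](\tau)=T\,\F_t[\psi](T\tau)$ gives, for any $\theta\in[0,1)$,
\[
\sum_{L_1}L_1^{\theta}\|\varphi_{L_1}\F_t[\psi_T]\|_{L^1_\tau}\lesssim T^{1-\theta}\|\psi\|_{B^{1}_{1,1}}\lesssim T^{1-\theta},
\qquad
\sum_{L_1}\langle L_1\rangle^{\theta}\|\varphi_{L_1}\F_t[\psi_T]\|_{L^2_\tau}\lesssim T^{\frac12-\theta}+1,
\]
so that choosing $\theta$ slightly below $1$ (tied to $\delta$) produces a positive power $T^{\mu}$ with $\mu=\mu(\delta)>0$. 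The point is that we only need to move the modulation weight down from exponent $\frac12$ to $\frac{1-\delta}{2}$, i.e. a net loss of $\frac{\delta}{2}$ in the $L$-power, which the time factor can pay for; the $M^2$ shift inside $\langle M^2+L\rangle$ is harmless because it only helps (it makes the weight larger and bounded below by $1$), and in the regimes where $L\sim L_1$ one simply bounds $\langle M^2+L\rangle^{\frac{1-\delta}{2}}\lesssim\langle M^2+L_2\rangle^{\frac{1-\delta}{2}}\langle L_1\rangle^{\frac{1-\delta}{2}}$ before applying the $\psi_T$ bounds.

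The main obstacle, and the only place requiring care, is the low-modulation regime $L\lesssim M^2$ together with $L_1\gtrsim L$: there the naive Young estimate in $L^2\times L^1$ loses a full power $L^{1/2}$ (or $M$) that is not directly compensated, and one must instead argue as in case (ii) of the $K_3$ estimate in Proposition~\ref{duam_est}, using $\|\varphi_L\|_{L^2_\tau}\sim L^{1/2}$, the bound $\langle M\rangle\lesssim\langle M^2+L_2\rangle^{1/2}$, and the $L^2$-summability $\sum_{L_1}L_1^{1/2}\|\varphi_{L_1}\F_t[\psi_T]\|_{L^2_\tau}\lesssim T^{0}$ — here one accepts $\mu=0$ from the $\psi_T$ side and must extract the $T^\mu$ gain elsewhere, e.g. by interpolating the $\mu=0$ inequality (which in fact holds with $b=\frac12$ on the right, i.e. $\|\psi_Tu\|_{X^{s,\frac12,1}}\lesssim\|u\|_{X^{s,\frac12,1}}$, a standard consequence of $\psi_T$ being a bounded multiplier on these spaces) against the strictly-positive-power estimate available for $b$ slightly below $\frac{1-\delta}{2}$, or more simply by noting that in this regime one can afford $L^{\frac{1-\delta}{2}-\frac12}=L^{-\frac\delta2}$ to be summed against $T^{-\delta}$ coming from the rescaling when $L\gtrsim T^{-1}$ and handled trivially when $L\lesssim T^{-1}$. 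I would present the argument in the cleanest of these forms: prove first the loss-free bound $\|\psi_T u\|_{X^{s,\frac12,1}}\lesssim\|u\|_{X^{s,\frac12,1}}$ and the positive-power bound $\|\psi_T u\|_{X^{s,\frac{1-\delta'}{2},1}}\lesssim T^{\mu'}\|u\|_{X^{s,\frac12,1}}$ for some $\delta'>\delta$ by the trichotomy above, then interpolate in $b$ to obtain the stated inequality with $\mu=\mu(\delta)>0$.
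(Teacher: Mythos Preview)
The paper gives no detailed proof here; it simply invokes Lemmas~2.5 and~3.1 of \cite{GTV97}. Your reduction to a one-variable estimate in $\tau$ for fixed $(N,M)$, followed by a dyadic decomposition of the convolution $\widehat{\psi_T}\ast\widehat h$, is precisely the Ginibre--Tsutsumi--Velo scheme the paper is pointing to, so the overall route matches.

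There is, however, a genuine slip in the mechanism you propose for extracting $T^\mu$. The scaling $\F_t[\psi_T](\tau)=T\widehat\psi(T\tau)$ gives, after the change of variable $L'=TL_1$,
\[
\sum_{L_1}L_1^{\theta}\|\varphi_{L_1}\F_t[\psi_T]\|_{L^1_\tau}
=T^{-\theta}\sum_{L'}(L')^{\theta}\|\varphi_{L'}\widehat\psi\|_{L^1_\tau}\sim T^{-\theta},
\]
not $T^{1-\theta}$; for $T\le 1$ and $\theta>0$ this is a \emph{loss}, so the trichotomy fed with this bound cannot produce the gain you claim. The correct source of $T^\mu$ is exactly the one you mention in your last paragraph almost as an afterthought: split the \emph{output} modulation at $L\sim T^{-1}$. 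For $L\gtrsim T^{-1}$ one has $\langle M^2+L\rangle^{-\delta/2}\le T^{\delta/2}$, and only the loss-free multiplier bound $\|\psi_T u\|_{X^{s,1/2,1}}\lesssim\|u\|_{X^{s,1/2,1}}$ is needed (this \emph{is} what your trichotomy proves, with $\theta=0$). For $L\lesssim T^{-1}$ one runs the trichotomy using the sharper low-frequency bound $\|\varphi_{L_1}\widehat{\psi_T}\|_{L^1_\tau}\lesssim TL_1$ (valid for $L_1\lesssim T^{-1}$), which makes every inner sum converge and yields a factor $M^{-\delta}$ when $M^2\gtrsim T^{-1}$, hence again $T^{\delta/2}$; the case $M^2\lesssim T^{-1}$ is the classical one-dimensional computation. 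So your final parenthetical remark is the actual argument; promote it to the main line and drop the $T^{1-\theta}$ display. The interpolation-in-$b$ fallback is then unnecessary and is in any case delicate for the $\ell^1_L$-based norm.
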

The proof of Lemma~\ref{sol_loc} is almost same 
as the proof of Lemma~2.5 and 3.1 in \cite{GTV97}. 
\begin{proof}[Proof of Theorem~\ref{LWP}]
Let $s\ge s_0>-\frac{1}{2}$ and $v_{0}\in H^s(\R^2)$ are given, 
and $T\in (0,1]$, $R>0$ will be chosen later. 
We define the function space $Z^s$ as
\[
Z^{s}:=\{v\in X^{s,\frac{1}{2},1}|\ \|v\|_{Z^s}:=\|v\|_{X^{s_0,\frac{1}{2},1}}+\alpha \|v\|_{X^{s,\frac{1}{2},1}}<\infty\}, 
\]
where $\alpha=\|v_0\|_{H^{s_0}}/\|v_0\|_{H^s}$. 
For $v$, $v_1$, $v_2\in B_R(Z^{s})$, we have
\[
\begin{split}
\|\Phi_{T,v_{0}}(v)\|_{Z^{s}}
&\le C_1(1+\alpha)\|v_{0}\|_{H^{s_0}} +C_2C_3C_4^2T^{2\mu}\|v\|_{Z^{s}}^2\\
&\leq C_1(1+\alpha )\|v_0\|_{H^{s_0}}+C_2C_3C_4^2T^{2\mu}R^2
\end{split}
\]
and
\[
\begin{split}
\|\Phi_{T,v_{0}}(v_1)-\Phi_{T,v_{0}}(v_2)\|_{Z^{s}}
&\leq C_2C_3C_4^2T^{2\mu}\|v_1+v_2\|_{Z^{s}}
\|v_1-v_2\|_{Z^{s}}\\
&\leq C_2C_3C_4^2T^{2\mu}R\|v_1-v_2\|_{Z^{s}}
\end{split}
\]
by Proposition~\ref{lin_est}, ~\ref{duam_est}, ~\ref{bilin_est}, Remark~\ref{be_mod_rem}, 
and Lemma~\ref{sol_loc}. 
Therefore, if we choose $T$, $R$ as
\[
R=2C_1(1+\alpha )\|v_0\|_{H^{s_0}},\ 0<T^{2\mu}<(4C_1C_2C_3C_4^2(1+\alpha)\|v_0\|_{H^{s_0}})^{-1}, 
\]
then $\Phi_{T,v_0}$ is contraction map on $B_R(Z^{s})$. 
We note that $T=T(\|v_0\|_{H^{s_0}})$. 
By Banach's fixed point theorem, there exists 
a solution $v\in X^{s,\frac{1}{2},1}$ to $v(t)=\Phi_{T,v_0}(v)(t)$ 
and $v|_{[0,T]}\in X^{s,\frac{1}{2},1}_T$ 
satisfies (\ref{ZKB_sym_int}) on $[0,T]$. 
The Lipschitz continuous dependence on initial data is obtained 
by the similar argument as above. 
The uniqueness is 
obtained by the same argument as in Section\ 4.2 of \cite{MR02}.  
\end{proof}

Next, to prove the global well-posedness of (\ref{ZKB_sym}) in $\widetilde{H}^{s}(\R^2)$, 
we define the function space $\widetilde{X}^{s,b,1}$ as the completion of the Schwartz class ${\mathcal S}(\R_{t}\times \R^2_{x,y})$ with the norm
\[
\|u\|_{\widetilde{X}^{s,b,1}}=\left\{\sum_{N\in 2^{\Z}}\sum_{M\in 2^{\Z}}\left(\sum_{L\in 2^{\Z}}\langle M\rangle^s\langle M^2+L\rangle^{b}\|P_{N,M}Q_{L}u\|_{L^2_{txy}}\right)^2\right\}^{\frac{1}{2}}.
\]
We also define $\widetilde{X}^{s,b,1}_T$ as the time localized space of $\widetilde{X}^{s,b,1}$. 
\begin{rem}
We can see that $\widetilde{X}^{s,\frac{1}{2},1}_T\hookrightarrow L^2((0,T);\widetilde{H}^{s+1}(\R^2))$ since $\langle M\rangle^{s+1} \lesssim \langle M\rangle^{s}\langle M^2+L\rangle^{\frac{1}{2}}$ and $l^1_L\hookrightarrow l^2_L$ hold. 
\end{rem}
\begin{prop}\label{lin_est_gwp}
Let $s\in \R$. There exists $C_1>0$, such that for any $u_0\in \widetilde{H}^s(\R^2)$, we have
\[
\|\psi (t)W(t)u_0\|_{\widetilde{X}^{s,\frac{1}{2},1}}\le C_1\|u_0\|_{\widetilde{H}^s}. 
\]
\end{prop}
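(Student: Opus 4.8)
The plan is to mirror the proof of Proposition~\ref{lin_est} verbatim, changing only the weight $\langle N\rangle^s$ to $\langle M\rangle^s$ in the definition of the norm. The point is that the argument in Proposition~\ref{lin_est} decouples the $N$-summation and the $M$-summation from the $L$-summation, and the only place where the weight in the spatial frequencies appears is in the reduction to an $L^2_{xy}$-estimate on $u_0$, which is insensitive to whether the weight is $\langle N\rangle$ or $\langle M\rangle$.

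First I would observe that, exactly as in Proposition~\ref{lin_est},
\[
\left(\sum_N\sum_M\langle M\rangle^{2s}\|P_{N,M}u_0\|_{L^2_{xy}}^2\right)^{\frac{1}{2}}\sim \|u_0\|_{\widetilde{H}^s},
\]
by Plancherel's theorem together with the fact that $\{\varphi_{N,M}\}_{N,M}$ is a partition of unity in $(\xi,\eta)$ and $\langle M\rangle^s = \langle\xi+\eta\rangle^s$ on $\supp\varphi_{N,M}$ up to constants. Hence it suffices to prove, for each fixed $N,M\in 2^{\Z}$,
\[
\sum_{L}\langle M^2+L\rangle^{\frac{1}{2}}\|P_{N,M}Q_{L}(\psi(t)W(t)u_0)\|_{L^2_{txy}}\lesssim \|P_{N,M}u_0\|_{L^2_{xy}},
\]
which is literally the same intermediate claim as in the proof of Proposition~\ref{lin_est}. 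That claim was reduced there, via Plancherel in $t$ and the identity $\varphi_M=\varphi_M\phi_M$, to the scalar estimate \eqref{exp_besov},
\[
\sum_{L}\langle M^2+L\rangle^{\frac{1}{2}}\|\phi_M(\zeta)\varphi_L(\tau)\F_t[\psi(t)e^{-|t|\zeta^2}]\|_{L^{\infty}_{\zeta}L^2_{\tau}}\lesssim 1,
\]
which is independent of $s$ and was quoted from the proof of Proposition~4.1 in \cite{MV}. Summing the fixed-$(N,M)$ bound against $\langle M\rangle^{2s}$ and taking square roots gives the claim.

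There is essentially no obstacle here: the proposition is a routine adaptation, and the only thing to double-check is that replacing $\langle N\rangle^s$ by $\langle M\rangle^s$ does not interfere with the step that pulls $\|P_{N,M}u_0\|_{L^2_{xy}}$ out in $L^{\infty}_{\xi\eta}$ — it does not, since that step uses only the $L^2_{xy}\to L^2_{xy}$ boundedness of Fourier multipliers and the weight is frozen on the dyadic block $\supp\varphi_{N,M}$. So I would simply write "The proof is identical to that of Proposition~\ref{lin_est}, replacing the weight $\langle N\rangle^s$ by $\langle M\rangle^s$ throughout," after recording the norm-equivalence displayed above.
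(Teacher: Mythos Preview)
Your proposal is correct and matches the paper's own treatment exactly: the paper simply states that the proof of Proposition~\ref{lin_est_gwp} is the same as that of Proposition~\ref{lin_est}, and your write-up spells out precisely the trivial modification (replacing $\langle N\rangle^s$ by $\langle M\rangle^s$) that this entails. There is nothing to add.
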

\begin{prop}\label{duam_est_gwp}
Let $s\in \R$. There exists $C_2>0$, such that for any $F\in \widetilde{X}^{s,-\frac{1}{2},1}$, we have
\[
\left\|\psi (t){\mathcal L}F(t)\right\|_{\widetilde{X}^{s,\frac{1}{2},1}}\le C_2\|F\|_{\widetilde{X}^{s,-\frac{1}{2},1}}
\]
\end{prop}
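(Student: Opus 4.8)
The plan is to observe that the proof of Proposition~\ref{duam_est} transfers essentially verbatim, because the frequency weight never enters the core of that argument. Recall that in the proof of Proposition~\ref{duam_est}, after using Plancherel's theorem to write $\|P_{N,M}Q_L(\psi\mathcal{L}F)\|_{L^2_{txy}}\sim\|\varphi_{N,M}(\xi,\eta)\varphi_L(\tau)\F_t[\psi\mathcal{K}F](\tau,\xi,\eta)\|_{L^2_{\xi\eta\tau}}$, the whole estimate was reduced to the \emph{weight-free}, fixed-$(N,M)$ inequality~(\ref{Duamel__est_pf}):
\[
\sum_L\langle M^2+L\rangle^{\frac{1}{2}}\|\varphi_{N,M}\varphi_L(\tau)\F_t[\psi\mathcal{K}F]\|_{L^2_{\xi\eta\tau}}
\lesssim \sum_L\langle M^2+L\rangle^{-\frac{1}{2}}\|\varphi_{N,M}\varphi_L(\tau)\F[U(-\cdot)F]\|_{L^2_{\xi\eta\tau}} ,
\]
valid for every pair $(N,M)\in 2^{\Z}\times 2^{\Z}$. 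The proof of (\ref{Duamel__est_pf}) splits $\psi\mathcal{K}F=K_1+K_2+K_3-K_4$ and runs through Cauchy--Schwarz and Young estimates that only see $\widetilde{w}_{N,M}$ through the localization $|\xi+\eta|\sim M$ (via the factor $\phi_M(\xi+\eta)$); the size $|(\xi,\eta)|\sim N$ and any weight attached to it are carried along passively and play no role.

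Given (\ref{Duamel__est_pf}), I would simply multiply both sides by $\langle M\rangle^s$, square, and sum over $N,M\in 2^{\Z}$, invoking the identity
\[
\left(\sum_{N}\sum_{M}\langle M\rangle^{2s}\Bigl(\sum_{L}\langle M^2+L\rangle^{b}\|P_{N,M}Q_L u\|_{L^2_{txy}}\Bigr)^2\right)^{\frac{1}{2}}=\|u\|_{\widetilde{X}^{s,b,1}}
\]
for $b=\pm\tfrac12$. Combining this with the Plancherel identity above (with $u=\psi\mathcal{L}F$, $b=\tfrac12$) and with $\|\varphi_{N,M}\varphi_L(\tau)\F[U(-\cdot)F]\|_{L^2_{\xi\eta\tau}}=\|P_{N,M}Q_L F\|_{L^2_{txy}}$, which follows from $\F[U(-\cdot)F](\tau,\xi,\eta)=\widetilde{F}(\tau+\xi^3+\eta^3,\xi,\eta)$, yields exactly $\|\psi\mathcal{L}F\|_{\widetilde{X}^{s,\frac{1}{2},1}}\le C_2\|F\|_{\widetilde{X}^{s,-\frac{1}{2},1}}$ with the same constant $C_2$ as in Proposition~\ref{duam_est}. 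The density of $\mathcal{S}(\R_t\times\R^2_{x,y})$ in $\widetilde{X}^{s,-\frac12,1}$ handles the passage to general $F$.

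There is no genuinely new obstacle here: the only thing one must verify is that the weight $\langle N\rangle^s$ appearing in the original norm $X^{s,b,1}$ was inert in the proof of Proposition~\ref{duam_est} — i.e. it occurred identically on both sides of each displayed estimate and never interacted with the bounds for $K_1,\dots,K_4$ — and therefore replacing it by $\langle M\rangle^s$ (which is likewise constant on the support of $P_{N,M}$) changes nothing. The same remark, applied to the proof of Proposition~\ref{lin_est}, proves Proposition~\ref{lin_est_gwp}.
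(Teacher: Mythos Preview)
Your proposal is correct and follows exactly the paper's approach: the paper states that the proof is ``same as the proof of Proposition~\ref{duam_est},'' and your observation that the weight $\langle N\rangle^s$ (resp.\ $\langle M\rangle^s$) is inert in the reduction to the fixed-$(N,M)$ estimate~(\ref{Duamel__est_pf}) is precisely why.
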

The proof of Proposition~\ref{lin_est_gwp} and ~\ref{duam_est_gwp} 
are same as the proof of Proposition~\ref{lin_est} and ~\ref{duam_est}. 
\begin{prop}\label{bilin_est_gwp}
Let $s>-\frac{1}{2}$. There exist $0<\delta \ll 1$ and $C_3>0$, such that for any $u$, $v\in \widetilde{X}^{s,\frac{1-\delta}{2},1}$, we have
\[
\|(\partial_x+\partial_y)(uv)\|_{\widetilde{X}^{s,-\frac{1}{2},1}}\le C_3\|u\|_{\widetilde{X}^{s,\frac{1-\delta}{2},1}}\|v\|_{\widetilde{X}^{s,\frac{1-\delta}{2},1}}.
\]
\end{prop}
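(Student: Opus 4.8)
The plan is to reduce Proposition~\ref{bilin_est_gwp} to the already-established Proposition~\ref{bilin_est} by observing that the two norms differ only in the weight: $\widetilde{X}^{s,b,1}$ carries $\langle M\rangle^s$ where $X^{s,b,1}$ carries $\langle N\rangle^s$. Since $M\lesssim N$ always (the remark after Definition~\ref{FRN}), we have $\langle M\rangle^s\lesssim\langle N\rangle^s$ for $s\ge 0$ and $\langle M\rangle^s\gtrsim\langle N\rangle^s$ for $s<0$, so neither norm dominates the other globally, and a direct black-box citation is not available. Instead I would re-run the proof of Proposition~\ref{bilin_est} tracking the variable $M$ (which labels $|\xi+\eta|$) in place of $N$ in the outer weight, and check that the same dyadic summations close.

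The key point making this work is that the derivative loss $(\partial_x+\partial_y)$ produces exactly a factor $M$ (the size of $|\xi+\eta|$ on the output), not $N$; this is already visible in the existing proof, where the factor $\langle N\rangle^sM/\langle M^2+L\rangle^{1/2}$ appears in the dual sum. So in the $\widetilde{X}$ setting the dual weight becomes $\langle M\rangle^sM/\langle M^2+L\rangle^{1/2}$, and on the input side the multipliers $f_{N_1,M_1,L_1}$, $g_{N_2,M_2,L_2}$ acquire $\langle M_1\rangle^s$, $\langle M_2\rangle^s$ instead of $\langle N_1\rangle^s$, $\langle N_2\rangle^s$. First I would record the frequency relation $M\lesssim\max\{M_1,M_2\}$, coming from $\xi+\eta=(\xi_1+\eta_1)+(\xi_2+\eta_2)$, which is already used inside Case~1 and Case~3 of the proof of Proposition~\ref{bilin_est}; this is the analogue of $N\lesssim N_1$ (valid whenever two of $N,N_1,N_2$ are comparable or $N$ is largest). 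Then, by the symmetry reduction, I would assume $M\lesssim M_1$ and absorb $\langle M\rangle^s$ into $\langle M_1\rangle^s$ when $s\ge 0$, or simply bound $\langle M\rangle^s\le 1$ when $-\tfrac12<s<0$, at the cost of a harmless factor; in the latter range one also needs $\langle M_1\rangle^{s}\ge\langle N_1\rangle^{s}$ etc. to be of no help — but it is not needed, since the positive powers of $N_1$ ($N_1^{-\epsilon}$ with room to spare) already dominate all the $M$-factors, which satisfy $M_j\lesssim N_j\lesssim N_1$.

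Concretely, I would go case by case exactly as in the proof of Proposition~\ref{bilin_est}: the low-frequency regime $N_1\lesssim 1$ is unchanged because there $\langle M_j\rangle\sim 1\sim\langle N_j\rangle$; in the regime $N_1\ge 1$ one reaches an estimate of the shape
\[
\sum_{N,M,L}\frac{\langle M\rangle^sM}{\langle M^2+L\rangle^{1/2}}\sup_{\|w\|_{L^2}=1}I
\lesssim N_1^{-\epsilon}(M_1M_2)^{\epsilon/4}\|\widetilde{f}\|_{L^2}\|\widetilde{g}\|_{L^2},
\]
where $\widetilde{f}_{N_1,M_1,L_1}=\langle M_1\rangle^s\langle M_1^2+L_1\rangle^{(1-\delta)/2}u_{N_1,M_1,L_1}$ and similarly for $\widetilde{g}$. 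The bilinear Strichartz inputs (\ref{BSE_3}), the $L^p$-Strichartz bounds (\ref{LP_Stri}), (\ref{mLP_Stri}), (\ref{L2p_Stri}), and the modulation lower bound $\max\{L_1,L_2,L\}\gtrsim N_1^3$ in Case~3 are all stated in terms of $N$'s and $M$'s and are used verbatim. The only genuine modification is replacing the final Cauchy–Schwarz step over $\sum_{N_1,M_1}$ and $\sum_{N_2,M_2}$: the $\ell^2$ structure is now in $(N_j,M_j)$ with weight $\langle M_j\rangle^s$, which is precisely the definition of $\|\cdot\|_{\widetilde{X}^{s,(1-\delta)/2,1}}$, so the summability relies on the same convergent sum $\sum_{N_1\ge 1}\sum_{M_1\lesssim N_1}\sum_{N_2\lesssim N_1}\sum_{M_2\lesssim N_2}N_1^{-2\epsilon}(M_1M_2)^{\epsilon/2}$ as before.

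The main obstacle — really the only place requiring care — is the sign of $s$ in handling $\langle M\rangle^s$ versus $\langle M_1\rangle^s$ after the reduction $M\lesssim M_1$. For $s\ge 0$ this is immediate; for $-\tfrac12<s<0$ one cannot write $\langle M\rangle^s\lesssim\langle M_1\rangle^s$, so instead I would keep $\langle M_1\rangle^s$, $\langle M_2\rangle^s$ on the input side untouched and bound the output weight crudely by $\langle M\rangle^s\le\langle M\rangle^{-1/2}\lesssim 1$ (or by a small negative power of $M$ that only improves the $L$-summation), then verify that the resulting powers of $N_1$ are still strictly negative — which they are, with the same constraints on $\delta,\epsilon$ as in Proposition~\ref{bilin_est}, since the critical threshold there was also $s>-\tfrac12$ and nothing in the argument used $s<0$ other than in bookkeeping of $\langle N\rangle^s$. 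Hence the same choices of $\delta$ and $\epsilon$ work, and the proof closes.
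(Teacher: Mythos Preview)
Your overall plan matches the paper's: rerun the proof of Proposition~\ref{bilin_est} with the weights $\langle M\rangle^s$, $\langle M_1\rangle^s$, $\langle M_2\rangle^s$ in place of $\langle N\rangle^s$, $\langle N_1\rangle^s$, $\langle N_2\rangle^s$. The low-frequency regime, Case~1, and most of Case~3 go through essentially as you describe, since there $N_1\sim N_2$ legitimises the symmetry reduction $M\lesssim M_1$ and the sums close as in the paper's Case~1$'$. The gap is in Case~2 ($N\sim N_1\gg N_2$).

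There the reduction to $M\lesssim M_1$ is illegitimate: once $N_1\gg N_2$ the roles of $u$ and $v$ are not interchangeable, so you cannot assume $M_1\ge M_2$. More importantly, your crude bound $\langle M\rangle^s\lesssim 1$ together with $\langle M_1\rangle^{-s}\lesssim N_1^{-s}$ loses a full factor $N_1^{|s|}$ relative to the original Case~2, where the output weight $\langle N\rangle^s\sim N_1^{s}$ cancelled exactly against $\langle N_1\rangle^{-s}$. Tracing through, the final exponent of $N_1$ becomes $-2s-\tfrac12+O(\delta,\epsilon)$ rather than $-s-\tfrac12+O(\delta,\epsilon)$, which is positive for $s$ near $-\tfrac12$; the claim ``the resulting powers of $N_1$ are still strictly negative'' is false throughout $-\tfrac12<s<-\tfrac14$ (approximately).

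The paper repairs this by splitting Case~2 into $M\ge M_1$ and $M\le M_1$. In the first sub-case $\langle M\rangle^s\langle M_1\rangle^{-s}\lesssim 1$ for $s<0$, and together with $\langle M_2\rangle^{-s}\lesssim\langle N_2\rangle^{-s}$ one recovers the exact weight balance of the original Case~2. In the second sub-case the paper \emph{swaps the H\"older pairing}, bounding $I\lesssim\|u_{N_1,M_1,L_1}\|_{L^{2/(1-\delta)}}\|v_{N_2,M_2,L_2}\cdot w_{N,M,L}\|_{L^{2/(1+\delta)}}$; converting $u$ to $f$ then produces the factor $\langle M_1^2+L_1\rangle^{-(1-\delta)/2+5\delta/6}$, which absorbs the $M$-sum over $M\le M_1$ exactly as in Case~1$'$. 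A similar additional split (on whether $M\sim N$ on the output support) is needed in Case~3$'$. Without these swaps the argument does not close down to $s>-\tfrac12$.
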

The proof of Proposition~\ref{bilin_est_gwp} 
is similar to the proof of Proposition~\ref{bilin_est}. 
We will give the proof at the last part of this section. 
\begin{proof}[Proof of Theorem~\ref{GWP}]
Let $s\ge s_0>-\frac{1}{2}$ are given. 
By Proposition~\ref{lin_est_gwp},~\ref{duam_est_gwp},~\ref{bilin_est_gwp}, 
and using the same argument as in the proof of Theorem~\ref{LWP}, 
we obtain the solution $v\in \widetilde{X}^{s,\frac{1}{2},1}_T$ to 
(\ref{ZKB_sym}) on $[0,T]$ with $T=T(\|v_0\|_{\widetilde{H}^{s_0}})$. 
Let $T'\in (0,T)$ be fixed. 
Since $\widetilde{X}^{s,\frac{1}{2},1}_T\hookrightarrow L^2([0,T];\widetilde{H}^{s+1}(\R^2))$ 
holds, there exists $t_0\in (0,T')$ 
such that $v(t_0)\in \widetilde{H}^{s+1}(\R^2)$. 
Therefore, by choosing $v(t_0)$ as the initial data 
and using the uniqueness of the solution, we obtain 
$v(t_0+\cdot)\in \widetilde{X}^{s+1,\frac{1}{2},1}_{T-t_0}$. 
In particular, we have $v(T')\in \widetilde{H}^{s+1}(\R^2)$. 
By repeating this argument, we get $v(T')\in \widetilde{H}^{\infty}(\R^2)$. 
Since we can choose $T'>0$ arbitrary small, $v$ belongs to 
$C((0,T];\widetilde{H}^{\infty}(\R^2))$. 
This arrows us to take the $L^2$-scalar product of (\ref{ZKB_sym}) with $v$, 
 and we have
\[
\begin{split}
\frac{d}{dt}\|v(t)\|_{L^2_x}^2
&=(\partial_tv(t), v(t))_{L^2_x}\\
&=\left(-(\partial_x^3+\partial_y^3)v(t)+(\partial_x+\partial_y)^2v(t)+(\partial_x+\partial_y)(v(t)^2),v(t)\right)_{L^2_x}\\
&=-\|(\partial_x+\partial_y)v(t)\|_{L^2_x}^2\le 0
\end{split}
\]
for any $t\in (0,T)$. 
Therefore, $\|v(t)\|_{L^2_x}$ is non-increasing, 
and we can extend the solution $v$ globally in time. 
\end{proof}
\begin{rem}
We note that the embedding
$X^{s,\frac{1}{2},1}_T\hookrightarrow L^2([0,T];H^{s+1}(\R^2))$ 
does not hold. 
Therefore, we cannot use the above argument 
for initial data $v_0\in H^s(\R^2)$. 
\end{rem}
Finally, we give the proof of Proposition~\ref{bilin_est_gwp}
\begin{proof}[Proof of Proposition~\ref{bilin_est_gwp}]
We put 
\[
u_{N_1,M_1,L_1}=P_{N_1,M_1}Q_{L_1}u,\ v_{N_2,M_2,L_2}=P_{N_2,M_2}Q_{L_2}v,\ w_{N,M,L}=P_{N,M}Q_Lw,
\]
\[
f_{N_1,M_1,L_1}=\langle M_1\rangle^s\langle M_1^2+L_1\rangle^{\frac{1-\delta}{2}}u_{N_1,M_1,L_1},\ 
g_{N_2,M_2,L_2}=\langle M_2\rangle^s\langle M_2^2+L_2\rangle^{\frac{1-\delta}{2}}v_{N_2,M_2,L_2}
\]
for $0<\delta \ll 1$ and
\[
I=\left|\int u_{N_1,M_1,L_1}\cdot v_{N_2,M_2,L_2}\cdot w_{N,M,L}dtdxdy\right|.
\]
We use $L_1^{b}\|u_{N_1,M_1,L_1}\|_{L^2_{txy}}
\lesssim \langle M_1\rangle^{-s}\|f_{N_1,M_1,L_1}\|_{L^2_{txy}}$ 
and $L_2^{b}\|v_{N_2,M_2,L_2}\|_{L^2_{txy}}
\lesssim \langle M_2\rangle^{-s}\|g_{N_2,M_2,L_2}\|_{L^2_{txy}}$ 
instead of 
$L_1^{b}\|u_{N_1,M_1,L_1}\|_{L^2_{txy}}
\lesssim \langle N_1\rangle^{-s}\|f_{N_1,M_1,L_1}\|_{L^2_{txy}}$ 
and $L_2^{b}\|v_{N_2,M_2,L_2}\|_{L^2_{txy}}
\lesssim \langle N_2\rangle^{-s}\|g_{N_2,M_2,L_2}\|_{L^2_{txy}}$
in the proof of Proposition~\ref{bilin_est}. 
By the same argument as in the proof of Proposition~\ref{bilin_est}, 
we have
\[
\begin{split}
&\sum_{N_1,M_1\lesssim 1}\sum_{L_1}\sum_{N_2,M_2\lesssim 1}\sum_{L_2}
\left(\sum_{N,M,L}\frac{\langle M\rangle^sM}{\langle M^2+L\rangle^{\frac{1}{2}}}\sup_{\|w\|_{L^2}=1}I\right)
\lesssim 
\|u\|_{\widetilde{X}^{s,\frac{1-\delta}{2},1}}\|v\|_{\widetilde{X}^{s,\frac{1-\delta}{2},1}}
\end{split}
\]
for any $s\in \R$ and it suffice to show that
\begin{equation}\label{bilin_pf_gwp}
\begin{split}
&\sum_{N,M,L}\frac{\langle M\rangle^sM}{\langle M^2+L\rangle^{\frac{1}{2}}}
\sup_{\|w\|_{L^2}=1}I\\
&\lesssim 
N_{1}^{-\epsilon}(M_1M_2)^{\frac{\epsilon}{4}}
\|f_{N_1,M_1,L_1}\|_{L^2_{txy}}\|g_{N_2,M_2,L_2}\|_{L^2_{txy}}
\end{split}
\end{equation}
for $N_1\ge N_2$, $N_1\ge 1$, and small $\epsilon >0$. \\
\kuuhaku \\
\kuuhaku \\
\underline{Case\ 1':\ $N_1\sim N_2\gg N$}

We only have to modify little in the proof of Proposition~\ref{bilin_est}, Case\ 1.
Since it hold that
\[
\langle M_1\rangle^{-s}
\sum_{M\lesssim M_1}
\frac{\langle M\rangle^sM}{\langle M_1^2+L_1\rangle^{\frac{1-\delta}{2}-\frac{5}{6}\delta}}
\sum_{L}\frac{L^{\frac{1-\delta}{2}-\frac{\epsilon}{4}}}{\langle M^2+L\rangle^{\frac{1}{2}}}
\lesssim \sum_{M\lesssim M_1}\frac{M^{s+1-\delta-\frac{\epsilon}{2}}}{\langle M_1\rangle^{s+1-\frac{8}{3}\delta}}
\lesssim 1
\]
for $\epsilon =\frac{10}{3}\delta$, $s>-1+\frac{8}{3}\delta$, and 
\[
\langle M_2\rangle^{-s}\lesssim N_1^{-s}
\]
for $s<0$, 
we get (\ref{bilin_pf}) for $-\frac{1}{2}<s<0$ 
by the same way as in the proof of Proposition~\ref{bilin_est}, Case\ 1. \\
\\
\underline{Case\ 2':\ $N\sim N_1\gg N_2$}

If $M\ge M_1$, then we have
\[
\langle M\rangle^{s}\langle M_1\rangle^{-s}\langle M_2\rangle^{-s}
\lesssim \langle M_2\rangle^{-s}\lesssim N_1^{-s}
\]
for $s<0$. 
Therefore, we get (\ref{bilin_pf}) for $-\frac{1}{2}<s<0$ 
by the same way as in the proof of Proposition~\ref{bilin_est}, Case\ 2.

While, if $M\le M_1$, then we have
\[
J_{\delta,\epsilon}(N,M,N_2,M_2)
\lesssim J_{\delta,\epsilon}(N_1,M_1,N_2,M_2). 
\]
Therefore, by estimating
\[
I\lesssim \|u_{N_1,M_1,L_1}\|_{L^\frac{2}{1-\delta}_{txy}}
\|v_{N_2,M_2,L_2}\cdot w_{N,M,L}\|_{L^\frac{2}{1+\delta}_{txy}}
\]
instead of
\[
I\lesssim 
\|u_{N_1,M_1,L_1}\cdot v_{N_2,M_2,L_2}\|_{L^\frac{2}{1+\delta}_{txy}}
\|w_{N,M,L}\|_{L^\frac{2}{1-\delta}_{txy}}
\]
in the proof of Proposition~\ref{bilin_est}, Case\ 2, 
we get (\ref{bilin_pf}) for $-\frac{1}{2}<s<0$ 
by the same modification such as Case\ 1'\\
\kuuhaku \\
\underline{Case\ 3':\ $N\sim N_1\sim N_2\ge 1$}

If ${\rm supp}\F_{x,y}[w_{N,M,L}]\subset \{(\xi, \eta)|\ |\xi|\gg |\eta|\ {\rm or}\ 
|\xi|\ll |\eta|\}$, then  $M\sim N$ holds. 
Therefore, we have
\[
\langle M\rangle^{s}\langle M_1\rangle^{-s}\langle M_2\rangle^{-s}
\lesssim \langle N\rangle^{s}\langle N_1\rangle^{-s}\langle N_2\rangle^{-s}\lesssim N_1^{-s}
\]
for $s<0$ and get (\ref{bilin_pf}) for $-\frac{1}{2}<s<0$ 
by the same way as in the proof of Proposition~\ref{bilin_est}, Case\ 3. 

We assume ${\rm supp}\F_{x,y}[w_{N,M,L}]\subset \{(\xi, \eta)|\ |\xi|\sim |\eta|\}$. 
It suffice to show the estimate for $I_{1,2}$ and $I_{2,2}$, 
which are defined in Proposition~\ref{bilin_est}, Case\ 3. 
By the same modification such as in Case\ 1', we can obtain (\ref{bilin_pf}) for $-\frac{1}{2}<s<0$. 
\end{proof}
\section*{Acknowledgements}
This work is financially supported by JSPS KAKENHI Grant Number 17K14220
and Program to Disseminate Tenure Tracking System from the Ministry of Education, Culture, Sports, Science and Technology. 
The author would like to his appreciation 
to Shinya Kinoshita (Nagoya university) 
for his useful comments and discussions. 

\end{document}